\documentclass[11pt,letterpaper,reqno]{amsart}

\usepackage{amsmath,amscd}

\usepackage{setspace}
\usepackage{multicol}
\usepackage[lite]{amsrefs} 
\usepackage{amssymb} 
\usepackage{graphicx} 
\usepackage{pdflscape}
\usepackage{euscript}
\usepackage{color}
\usepackage{array}
\usepackage{picture}
\usepackage{epic}
\usepackage{tikz}
 \usetikzlibrary{backgrounds,cd}
\usepackage{comment}
\usepackage{enumerate}
\usepackage{hyperref}
\usepackage[margin=0.97in]{geometry}
\usepackage{caption}
\usepackage{MnSymbol}
\usepackage{enumitem}
\usepackage{fancyhdr} 
\usepackage{centernot}
\usepackage{mathtools}
\usepackage{stmaryrd}
\usepackage{float}
\newcommand{\coker}{\operatorname{coker}}

\newcommand{\set}[1]{\left\{#1\right\}}

\newcommand{\cC}{\mathcal C}
\newcommand{\cF}{\mathcal F}
\newcommand{\cN}{\mathcal N}
\newcommand{\cI}{\mathcal I}

\newcommand{\fa}{{\mathfrak a}}             
\newcommand{\fb}{{\mathfrak b}}

\newcommand{\fg}{{\mathfrak g}}

\newcommand{\fk}{{\mathfrak k}}
\newcommand{\fl}{{\mathfrak l}}

\newcommand{\fp}{{\mathfrak p}}
\newcommand{\fq}{{\mathfrak q}}
\newcommand{\fr}{{\mathfrak r}}
\newcommand{\ft}{{\mathfrak t}}

\newcommand{\IC}{\cI \cC}
\newcommand{\CC}{\cC \cC}

\newcommand{\ga}{\alpha}
\newcommand{\gre}{\epsilon}

\numberwithin{equation}{section}

\theoremstyle{plain} 
\newtheorem{theorem}{Theorem}[section]
\newtheorem{corollary}[theorem]{Corollary}
\newtheorem{lemma}[theorem]{Lemma}
\newtheorem{proposition}[theorem]{Proposition}
\newtheorem*{proposition*}{Proposition}

\newtheorem{conjecture}[theorem]{Conjecture}

\theoremstyle{definition}
\newtheorem{definition}[theorem]{Definition}

\newtheorem{example}[theorem]{Example}
\newtheorem*{ack}{Acknowledgement}
\theoremstyle{remark}
\newtheorem{remark}[theorem]{Remark}

\newcommand{\C}{\mathbb{C}}   
\renewcommand{\P}{\mathbb{P}}

\newcommand{\U}{\mathcal{U}}   

\newcommand{\cq}{\mathcal{Q}} 

\newcommand{\gb}{\beta}

\newcommand{\Ad}{\mathrm{Ad}}

\usepackage{amsmath,amscd}

\makeatletter
\@namedef{subjclassname@2020}{
  \textup{2020} Mathematics Subject Classification}
\makeatother

\hypersetup{bookmarksdepth=2}

\begin{document}

\title[Irreducible Characteristic Cycles]{Irreducible Characteristic Cycles for Orbit Closures of a Symmetric Subgroup}

\author{William Graham}
\address{Department of Mathematics, University of Georgia, Athens GA 30602, USA}
\email{wag@uga.edu}
\author{Minyoung Jeon}
\address{Center for Complex Geometry, Institute for Basic Science (IBS), Daejeon, 34126, Republic of Korea}
\email{minyoungjeon@ibs.re.kr}
\author{Scott Joseph Larson}
\address{Department of Mathematics, University of Georgia, Athens GA 30602, USA}
\email{scott.larson@uga.edu}
\subjclass[2020]{Primary 14L10, 14L30, 14M15, 14C99; Secondary 05E10 } 
\keywords{Small resolutions; K-orbit closures; Characteristic Cycles}
\thanks{All authors are the first authors.}
\thanks{}

\begin{abstract}
 Let $G = GL(n)$ and $K = GL(p) \times GL(q)$ with $p+q=n$, where the groups are taken over $\C$.  In this paper we study a certain family of $K$-orbit closures
on the flag variety $X$ of $G$.  The geometry of these orbit closures plays a central role in the infinite-dimensional 
representation theory of the real Lie group $U(p,q)$, and has applications to degeneracy loci and combinatorics. 
In this paper we use small resolutions to study orbit closures in this family.  We prove that the fibers of these resolutions
are smooth and strongly reduced, as well as a general result that if a variety has a resolution of singularities with these properties, then its characteristic cycle is irreducible.
Hence these orbit closures have irreducible characteristic cycles.  A result of Jones then allows us to calculate the 
torus-equivariant Chern-Mather classes of these orbit closures.
We describe torus fixed points and tangent spaces of the resolutions, and use localization to obtain a formula for these classes.
We conjecture that the Chern-Mather classes of a $K$-orbit closure are equivariantly positive when
expressed in a Schubert basis of equivariant Borel-Moore homology, and use our results to verify the conjecture in
an example.
\end{abstract}

\maketitle 

\thispagestyle{empty}
\setcounter{tocdepth}{1}
\parskip=4pt \baselineskip=14pt

\section{Introduction}

Let $Y$ be a closed irreducible subvariety of a smooth complex algebraic variety $X$, and let \(\IC_Y\) denote the intersection cohomology complex of the trivial local system on the smooth locus of \(Y\).  The characteristic cycle \(\CC_Y\) of \(\IC_Y\) is a Lagrangian cycle on the cotangent bundle $T^*X$, which
carries important geometric information about $Y$.  $\CC_Y$ is said to be irreducible or trivial if it is the
class of the closure of the conormal bundle of the smooth locus of $Y$.  The main results of this paper concern $\CC_Y$
in the case where $Y$ is the closure of a $K$-orbit on the flag variety
$X$ of $G$, for $G = GL(n) \supset K = GL(p) \times GL(q)$ with $p+q=n$.  We prove that if $Y$ belongs to a certain family of $K$-orbit closures, then $\CC_Y$ is irreducible.
We do this by considering a particular resolution of singularities $\mu: Z \to Y$, which Larson proved is small (see \cite{Lar}).  We prove that the fibers
of $\mu$ are smooth and strongly reduced (see Theorem \ref{t:fiberhomog}).
In Section \ref{s:small-charcycle}, we prove a general result (Theorem \ref{t:char-cycle}) that shows that these conditions
on the resolution imply that the characteristic cycle is irreducible.  We conclude that for $Y$ in this family of $K$-orbit closures, $\CC_Y$ is irreducible.

Our main application is to the $T$-equivariant Chern-Mather class $c_M^T(Y)$ of $Y$, where
$T$ is a maximal torus of $K$.  Jones proved\footnote{Jones's result was stated
non-equivariantly, but it extends immediately to the equivariant setting.} that if
that $\mu: Z \to Y$ is a $T$-equivariant small resolution and $\CC_Y$ is irreducible, $c_M^T(Y) = \mu_*^T(c^T(Z) \cap [Z]_T)$.  
The torus $T$ acts on $Z$ with finitely many fixed points.  We describe the $T$-weights of $T_z(Z)$ at each $T$-fixed
point $z$.  Localization in equivariant Borel-Moore homology then allows us to compute $c_M^T(Y)$ in
the $T$-equivariant Borel-Moore homology $H^T_*(X)$.  We conjecture that these classes have a positivity
property analogous to the positivity for equivariant fundamental classes proved in \cite[Theorem 3.2]{Gra}.

More generally, suppose that $G$ is a reductive algebraic group with an involution $\theta$ and let $K = G^{\theta}_0$
be the identity component of the fixed group of $\theta$.  Let $B \supset H$ be a $\theta$-stable Borel subgroup
and maximal torus of $G$, and let $T = (H \cap K)_0$.  Choose the positive system of roots such that
the root spaces in $\mbox{Lie }B$ correspond to positive roots.  Let $W$ be the Weyl group of $G$, and
let $Y_w = \overline{B w B}/B \subset X = G/B$.  The fundamental classes $[Y_w]_T$ form a basis of $H^T_*(X)$ 
over $H^*_T$, the equivariant cohomology of a point.  We make the following conjecture.

\begin{conjecture}\label{conj:positive}
Let $Y$ be the closure of a $K$-orbit in $X$.  When
\(c_M^T(Y) \in H^T_*(X) \) is expressed in terms of the Schubert basis $[Y_w]_T$,
each coefficient is a sum of monomials in the positive simple roots of $G$, restricted to
$T$, with nonnegative integer coefficients.
\end{conjecture}

We verify this conjecture in an example in Section \ref{ss:1212}.  Note that this conjecture
implies the analogous statement when $K$ is only assumed to satisfy $G^{\theta}_0 \subseteq K \subseteq G^{\theta}$.

The orbit closures in our family are described precisely in Theorem \ref{t:irreducible}.  They
are obtained roughly as follows.  Richardson and Springer (see \cite{RS90}, \cite{RS93})
defined a monoid action on the set of $K$-orbit closures.  The orbit closures in our
family are obtained by starting with a smooth orbit closure and acting, via the monoid action,
with a sequence of commuting simple reflections.  Although this is a very special family,
our results add these orbit closures to the short list of singular varieties whose characteristic
cycles are known to be irreducible.  This list includes Schubert varieties in
Grassmannians (see \cite{BresslerFinkelbergLunts1990}, where the result was proved
using Zelevinsky's resolutions of singularities), but it does not include all Schubert
varieties in the full flag variety (see \cite{KS97} for a non-irreducible example in type $A_8$ or \cite{BST2019} for \(U(p,q)\)).

Many of our results are given in greater generality than the particular family of examples
which is our focus.  We hope these results will be useful in other situations, for example, in studying
other resolutions of $K$-orbit closures.

The contents of the paper are as follows.  Section \ref{s:small-charcycle} proves the irreducibility
of $\CC_Y$ under the hypotheses that $Y$ has a small resolution with smooth and strongly
reduced fibers.  As this paper was being completed, we learned of closely related work
of Aluffi \cite{Aluffi2025} on irreducible characteristic cycles.
Section \ref{s:fiber-gen} contains general results about fibers related to mixed spaces,

For most of the remainder of the paper, $G$ denotes a reductive algebraic group with an involution $\theta$
and fixed point group $G^{\theta}$, and flag variety $X$.  We denote by $K$ 
a subgroup of $G$ such that $G^{\theta}_0 \subseteq K \subset G^{\theta}$.
For our main results, we assume that $G = GL(n)$ and $K = GL(p) \times GL(q)$, but as noted
above, many of our results are more general.  Section \ref{ss:K-orb} recalls some general results about
$K$-orbit closures and their parametrizations, due largely to Richardson and Springer, following
work by Lusztig and Vogan (see \cite{RS90}, \cite{RS93}).  The remainder of
Section \ref{s:background} is devoted to the case of
$G = GL(n)$ and $K = GL(p) \times GL(q)$.  In particular, we review
the combinatorics of clans used to parametrize the orbits, the monoid action, and the
$\tau$-invariant.  We also prove some results related to smooth orbit closures, including a description of
the torus-fixed points.

Section \ref{s:resolution} contains our main results about resolutions.
We recall Larson's result (see \cite{Lar}) characterizing when the resolutions in question are small.
We prove Theorem \ref{t:fiberhomog} on the fibers of the resolutions, which states that they are
smooth (in fact products of copies of $\mathbb{P}^1$), and strongly reduced.  The irreduciblity
of the characteristic cycles follows, as does the fact that the Chern-Mather class of such
a $K$-orbit closure is the pushforward of the total Chern class of the source. Section \ref{ss:isomorphic} is a complement to
these results: it gives an alternative description of the resolution that exhibits more of its symmetry,
and is useful in analyzing the fibers.

Section \ref{ss:mixed} contains a description of tangent spaces to mixed spaces, extending
results from \cite{GZ} to the greater generality needed in this paper.  Section \ref{ss:tangentK} contains a description of the tangent spaces of resolutions
of $K$-orbit closures at $T$-fixed points.  Section \ref{ss:fixedpoint-gl} studies the fibers of 
our family of resolutions over $T$-fixed points.  Section \ref{ss:Chern-Mather} explains how to use
localization to calculate push-forwards of equivariant Chern classes, and how to express these (and hence
the Chern-Mather classes of the orbit closures in our family) in terms of the Schubert basis.
Section \ref{s:examples} contains some examples explaining the realization of these resolutions as
configuration spaces, and demonstrating how these realizations can be used to verify the description of the
fibers in Theorem \ref{t:fiberhomog}.  Section \ref{ss:1212} gives an example expressing the equivariant Chern-Mather
in terms of the Schubert basis, verifying Conjecture \ref{conj:positive} in this example.

\begin{ack}
The third author would like to thank their advisor Roger Zierau for many useful discussions on characteristic cycles. M. Jeon is supported by the Institute for Basic Science (IBS-R032-D1).
This material is based upon work supported by a grant from the Institute for Advanced Study School of Mathematics.
\end{ack}

\section{Small resolutions and characteristic cycles} \label{s:small-charcycle}
In this section we prove a general theorem showing that under certain hypotheses, characteristic cycles
are irreducible.  As noted in the introduction, this theorem is closely related to recent work
of Aluffi (see \cite{Aluffi2025}).  

We begin by recalling some facts about characteristic cycles; see \cite{KS} and \cite{Dim04} for more information.
Suppose $X$ is a smooth variety and $X = \bigcup S_i$ is a Whitney stratification of $X$.  Let $\cF$ be an element of the
derived category of constructible sheaves on $X$, and assume that $\cF$ is constructible with respect to the  stratification $X = \bigcup S_i$.
The  singular support $SS(\cF)$ is a closed conical Lagrangian subset of $T^* X$ whose irreducible components
are closures of certain of the conormal bundles $\overline{T^*_{S_i}X}$ to the strata (see \cite[Section 4.3]{Dim04}, where
$SS(\cF)$ is denoted $CV(\cF)$).
The characteristic cycle $\CC(\cF)$ is a formal linear combination of cycles
of the form $\sum n_i [\overline{T^*_{S_i}X}]$ where $\overline{T^*_{S_i}X} \subseteq SS(\cF)$.
Let $Y = \overline{S_i}$ and $\cF = \IC_Y$.  In this case, $n_i =1$, and
$\CC(\IC_Y) =  [\overline{T^*_{S_i}X}] + \sum n_j [\overline{T^*_{S_j}X}]$, where the sum is over $j$ such
that $S_j \subset Y$.  If $\CC(\IC_Y) = [\overline{T^*_{S_i}X}]$, then we call the chararacteristic cycle of $\IC_Y$
irreducible or trivial.  We remark that if $U_i$ is the nonsingular locus of $\overline{S_i}$, then $U_i \supset S_i$ and
$\overline{T^*_{S_i}X} = \overline{T^*_{U_i}X}$.

\begin{definition}
Suppose that $f: Z \to X$ is a map of smooth varieties.  A smooth fiber $f^{-1}(x)$ is {\it strongly reduced}
if for all $z \in f^{-1}(x)$, we have
$\ker df_z = T_z f^{-1}(x)$.  If $f$ factors as
$Z \stackrel{\mu}{\rightarrow} Y\stackrel{\iota}{\rightarrow} X$, where $\iota$ is a closed embedding, 
we will sometimes abuse notation and use $\mu$ to denote $f = \iota \circ \mu$.
\end{definition}
This condition appears in \cite[Remark 2.12]{BE} (although they do not use the term strongly reduced), where it is
observed that strongly reduced implies reduced.  

\begin{theorem} \label{t:char-cycle}
Suppose we have a diagram \[ 
\begin{tikzcd}
Z \arrow[d,"\mu"]\arrow[dr,"f"]&\\
Y\arrow[r,"\iota",hook]&X.
\end{tikzcd}
\]
where $Z$ and $X$ are smooth varieties, $\iota$ is a closed embedding, and $\mu$ is a small resolution. 
Suppose that all fibers of $f$ are smooth and strongly reduced.  
Then the characteristic cycle $\CC(\mathcal{IC}_Y)$ is trivial.
\end{theorem}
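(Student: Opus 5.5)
Since $\mu$ is small, the decomposition theorem (and the definition of smallness) gives $R\mu_*\C_Z[\dim Z] \cong \IC_Y$, hence $R f_*\C_Z[\dim Z] \cong \iota_*\IC_Y$. Characteristic cycles are compatible with proper pushforward (Kashiwara--Schapira, \cite[Prop.~9.4.2–9.4.3]{KS}, \cite{Dim04}). Since $Z$ is smooth, $\CC(\C_Z[\dim Z])$ is the zero section $[T^*_Z Z]$, up to a global sign convention. Therefore
\[
\CC(\IC_Y) = f_*[T^*_ZZ],
\]
where $f_*$ denotes the Lagrangian pushforward through the correspondence $T^*Z \xleftarrow{df^*} Z\times_X T^*X \xrightarrow{\pi} T^*X$. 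Concretely, $f_*[T^*_ZZ] = \pi_*\big[(df^*)^{-1}(T^*_ZZ)\big]$, and this is legitimate because $f$ is proper and the relevant intersection is of the expected dimension (or one uses the refined intersection product). The set $(df^*)^{-1}(0)$ is the space of pairs $(z,\xi)$ with $\xi \in T^*_{f(z)}X$ and $\xi \circ df_z = 0$, i.e.\ $\xi \perp \operatorname{im} df_z$. Its image in $T^*X$ is the union of the conormal-type sets $\{(x,\xi): \xi \perp df_z(T_zZ)\text{ for some } z\in f^{-1}(x)\}$.

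The key step is to show that $(df^*)^{-1}(0)$ is smooth of dimension $\dim X$ and that the intersection is transverse, so that it has multiplicity one. I also need its image to be exactly $\overline{T^*_{Y_{reg}}X}$ with generic degree one.

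Stratify $Y$ by subvarieties $S$ over which $f$ is a smooth fibration with fibers $F_S$ of dimension $d_S$. This is possible because all fibers are smooth. For $z \in f^{-1}(S)$, strong reducedness gives $\ker df_z = T_zF_S$, so $\operatorname{rank} df_z = \dim Z - d_S$. The space $C_S=\{(z,\xi): z\in f^{-1}(S),\ \xi\perp \operatorname{im} df_z\}$ is then a vector bundle over $f^{-1}(S)$ of rank $\dim X - \dim Z + d_S$. Its total dimension is
\[
\dim S + d_S + \dim X - \dim Z + d_S = \dim X - (\dim Y - \dim S - 2d_S).
\]
Smallness says $2d_S < \dim Y - \dim S$ for $S$ not open, so $C_S$ has dimension less than $\dim X$ on non-open strata. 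On the open stratum $f$ is an isomorphism onto $Y_{reg}\cap S$, and $C_S$ is the conormal bundle there. Hence the closure of the top-dimensional part is $\overline{T^*_{Y_{reg}}X}$, and $\pi$ is birational onto it.

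The main obstacle is multiplicity: the pushforward of cycles counts $(df^*)^{-1}(T^*_ZZ)$ with its scheme-theoretic intersection multiplicity, and the lower-dimensional pieces could in principle contribute through excess intersection. To handle this, I would first show that $(df^*)^{-1}(0)$ has pure dimension $\dim X$. At each point the scheme $\{\xi\circ df_z=0\}$ is cut out by $\dim Z$ linear-in-$\xi$ equations on the smooth space $Z\times_X T^*X$, of dimension $\dim Z+\dim X$. So every component has dimension at least $\dim X$. The constant-rank computation above shows the locus over each non-open stratum has dimension below $\dim X$, so those loci cannot be components. Consequently the intersection is proper, and the scheme equals the closure of the open part, which is generically reduced since $df$ is an isomorphism there. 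Generically reduced together with Cohen–Macaulay (a proper intersection in a smooth ambient space) gives reduced, so the multiplicity is one.

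Pushing forward yields exactly $[\overline{T^*_{Y_{reg}}X}]$. Strong reducedness is precisely what makes the rank of $df$ constant along strata, which is what makes the dimension bound work; mere smoothness of the set-theoretic fibers would not suffice.
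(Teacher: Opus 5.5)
Your proof is correct, but it takes a different route from the paper's.

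\textbf{The paper's route.} It never computes a cycle-level pushforward. It uses only the set-theoretic estimate $SS(f_*\underline{\mathbb{Q}}_Z)\subseteq f_\pi\bigl(({}^tf')^{-1}(T^*_ZZ)\bigr)$ of \cite[Prop.~5.4.4]{KS}. It bounds the part of this set over $x\in S_i$ by $\dim X-\dim Z+2d_i$, using strong reducedness exactly as you do, so that $\ker(df_z)^t$ has constant dimension along $f^{-1}(x)$. It then compares this with $\dim X-\dim S_i$, the fiber dimension of $T^*_{S_i}X$, to exclude every non-open stratum from the support. The coefficient of $\overline{T^*_{S_0}X}$ is then $1$ for free, since $\IC_Y$ is the shifted constant sheaf on $S_0$.

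\textbf{Your route.} You use the proper direct image formula $\CC(Rf_*F)=f_*\CC(F)$, and then must do the intersection theory yourself. That means checking the pure expected dimension of $({}^tf')^{-1}(0)$ (the same dimension count, globalized over strata), its irreducibility, its reducedness at the generic point, and the birationality of $\pi$ onto $\overline{T^*_{S_0}X}$.

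\textbf{Comparison.} Your route needs one extra input, which you should cite rather than assert. It is the fact that Kashiwara--Schapira's homologically defined Lagrangian pushforward agrees with the refined intersection product, with multiplicities given by local lengths when the intersection is proper. In return, your computation is more informative. Under mere semismallness the same dimension count still gives a proper intersection, so the formula expresses the remaining coefficients of $\CC(Rf_*\C_Z)$ as intersection multiplicities. The support argument only says which conormals can occur.

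Two small corrections:
\begin{itemize}
\item Over $S_0$ the differential $df_z$ is injective, not an isomorphism. Injectivity is what makes ${}^tf'$ a submersion there and gives reducedness.
\item Strata over which $f$ is locally trivial exist for any proper map, not because the fibers are smooth.
\end{itemize}
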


\begin{proof}
Adapting the notation of \cite[(4.3.2)]{KS}, we consider the commutative diagram 
\begin{equation}
\begin{tikzcd}
T^* Z  \arrow[rd,"\pi_Z"'] & \arrow[l,"^{t}f' "']Z\times_X T^* X \arrow[d,"p"]\arrow[r,"f_{\pi} "]&T^* X\arrow[d,"\pi"]\\
& Z\arrow[r,"f"]&X,
\end{tikzcd}
\end{equation}
where the right hand square is Cartesian.  The map $^t f'$ is defined as $^t f'(z, \alpha) = (df_z)^t (\alpha)$ for
$z \in Z$, $x = f(z)$, and $\alpha \in T_x X$.  Let $\Lambda_z = \ker (df_z)^t \subseteq T_x X$, and let $N = \bigcup_{z \in f^{-1}(x)} \Lambda_z$.

As discussed above,  $\CC(\mathcal{IC}_Y)$ is supported on $SS(\IC_Y)$.
We claim that $SS(\IC_Y) \cap \pi^{-1}(x) \subseteq N$.
Since $\mu$ is a small resolution, $\IC_Y = \mu_* \underline{\mathbb{Q}}_Z$, which we identify with $f_*  \underline{\mathbb{Q}}_Z$;
here $ \underline{\mathbb{Q}}_Z$ denotes the constant sheaf on $Z$.
By \cite[Prop.~5.4.4]{KS}, we have
\begin{equation} \label{e:KS}
SS(\IC_Y) = SS(f_* \underline{\mathbb{Q}}_Z) \subseteq f_{\pi} (^{t}f') ^{-1} (SS( \underline{\mathbb{Q}}_Z) ).
\end{equation}
Since $Z$ is smooth, $SS( \underline{\mathbb{Q}}_Z)$ is the $0$-section of $T^*Z$.  Thus,
$$
(^{t}f') ^{-1} (SS( \underline{\mathbb{Q}}_Z) ) = \{ (z, \alpha) \mid \alpha \in \ker (df_z)^t \},
$$
so
$$
f_{\pi} ( (^{t}f') ^{-1} (SS( \underline{\mathbb{Q}}_Z) ) )  \cap \pi^{-1}(x) \subseteq  \bigcup_{z \in f^{-1}(x)} \Lambda_z = N,
$$
proving the claim.

Next, we claim that  $N$ is a constructible set of dimension at most
$ \dim X - \dim Z + 2 \dim f^{-1}(x)$. 
By hypothesis, $f^{-1}(x)$ is smooth, and $\ker df_z = T_z f^{-1}(x)$.  Consider the exact sequence
$$
0 \rightarrow  T_z f^{-1}(x) \rightarrow T_z Z \stackrel{df_z}{\rightarrow} T_x X \rightarrow C \rightarrow 0,
$$
where $C = \coker df_z$.  Then $\Lambda_z = \ker (df_z)^t = C^*$, so
$$
\dim \Lambda_z = \dim C = \dim X - \dim Z + \dim f^{-1}(x).
$$
Let $d = \dim \Lambda_z$, and
form the Cartesian diagram
$$
\begin{CD}
M  @>>> S \\
@VVV @VVV \\
f^{-1}(x) @>{\Lambda}>> \mathrm{Gr}(d, T_x).
\end{CD}
$$
Here $S \to  \mathrm{Gr}(d, T_x)$ is the tautological bundle.  Since
$M$ is a fiber bundle over $f^{-1}(x)$ with $d$-dimensional fibers, 
$\dim M = d+ \dim f^{-1}(x) = \dim X - \dim Z + 2 \dim f^{-1}(x)$.  The natural map $M \to T_x X$ has image $N$,
so $N$ is constructible and $\dim N \leq \dim M$, proving the claim. 
The two claims imply that
\begin{equation} \label{e:dim-est1}
\dim SS(\IC_Y) \cap \pi^{-1}(x) \leq \dim X - \dim Z + 2 \dim f^{-1}(x).
\end{equation}

There exists a Whitney stratification $Y = \bigcup_{i \in I} S_i$ such that
$\mu |_{\mu^{-1}(S_i)}$ is a fiber bundle (see \cite[Sections 1.6-1.7]{GoreskyMacPherson1988}).  (In our applications, $\mu$ will be equivariant with respect
to a group acting with finitely many orbits on $Y$, and we can take the $S_i$ to be these orbits.)
Let $d_i = \dim f^{-1}(x)$ for $x \in S_i$.  Let $S_0$ denote the open stratum, so $d_0 = 0$.  Since
$\mu$ is small, for $i \neq 0$ we have
\begin{equation} \label{e:dim-est2}
2 d_i < \dim Z - \dim S_i.
\end{equation}

Since $\IC_Y$ is constructible with respect to the stratification of $Y$,
 $SS(\IC_Y)$ is contained in the union of the $\overline{T^*_{S_i} X}$ (see
\cite[Prop.~8.4.1]{KS}).  For $x \in S_i$, $T^*_{S_i} X \cap \pi^{-1}(x)$ consists of the
elements of $T_x^* X$ which vanish on $T_x X$, so
$\dim T^*_{S_i} X \cap \pi^{-1}(x) = \dim X - \dim S_i$.  
Therefore, if $T^*_{S_i} X \subset SS(\IC_Y)$,
\eqref{e:dim-est1} implies that
$$
\dim X - \dim S_i \leq \dim X - \dim Z + 2 d_i,
$$
so
$$
\dim Z - \dim S_i \leq 2 d_i.
$$
Comparing with \eqref{e:dim-est2}, we see that this is impossible unless $i = 0$.  We conclude
that $SS(\IC_Y) \subseteq \overline{T^*_{S_0}}(X)$.  Hence $\CC(\IC_Y) = k [\overline{T^*_{S_0}(X)} ]$ for some integer
$k$.  But $S_0$ is the open stratum of $Y$, so $k = 1$.  
\end{proof}

\section{Generalities on fibers} \label{s:fiber-gen}
This section contains some general results (Propositions \ref{p:homogeneous} and \ref{p:fiber-general}) which we will use to study the fibers
of our resolutions.

Let $G$ be an algebraic group.  We say a map of $G$-varieties $\xi: Y \to Z$ has homogeneous fibers if for all  $y\in Y$, the stabilizer group $G^y$ of $y$
 acts transitively on $\xi^{-1}(y)$.

\begin{proposition} \label{p:homogeneous}
Let $G$ be an algebraic group and let $\xi:Z\rightarrow Y$ be a $G$-equivariant map of $G$-varieties. Suppose that $Y$ has finitely many $G$-orbits. Then
$\xi$ has homogeneous fibers if and only if $\#(G\backslash Z)=\#(G\backslash Y)$.
\end{proposition}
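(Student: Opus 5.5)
The plan is to reduce the count of $G$-orbits on $Z$ to a count of stabilizer orbits on individual fibers, one $G$-orbit of $Y$ at a time. Throughout I read ``homogeneous fibers'' as including that each fiber $\xi^{-1}(y)$ is nonempty, i.e.\ that $\xi$ is surjective; this is implicit in the intended applications. It is genuinely needed: without surjectivity the ``if'' direction fails, since one $Y$-orbit could be missed while another has two $Z$-orbits over it, leaving the counts equal.

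\emph{Step 1 (the key lemma).} Fix $y \in Y$ and let $\mathcal O = G\cdot y$. Since $\xi$ is equivariant, $\xi^{-1}(\mathcal O)$ is $G$-stable, and every $G$-orbit of $Z$ lies over exactly one $G$-orbit of $Y$. I will show there is a bijection
\[
G\backslash \xi^{-1}(\mathcal O) \;\longleftrightarrow\; G^y \backslash \xi^{-1}(y),
\]
sending a $G$-orbit $G\cdot z$ to $(G\cdot z)\cap \xi^{-1}(y)$.
\begin{itemize}
\item \emph{Nonempty.} Since $\xi(z)\in\mathcal O$, we can write $\xi(z)=g y$, and then $g^{-1}z\in\xi^{-1}(y)$.
\item \emph{A single $G^y$-orbit.} If $z_1,z_2\in\xi^{-1}(y)$ and $z_2=hz_1$ with $h\in G$, then $hy=\xi(hz_1)=\xi(z_2)=y$, so $h\in G^y$.
\item \emph{Inverse map.} Send a $G^y$-orbit $G^y z$ to $G\cdot z$. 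The two constructions are clearly mutually inverse.
\end{itemize}
This verification is routine; it is the only real content of the proof.

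\emph{Step 2 (summing).} Let $\mathcal O_1,\dots,\mathcal O_m$ be the finitely many $G$-orbits of $Y$, with representatives $y_i$. Partitioning $Z$ by the orbit over which each point lies and applying Step 1 gives
\[
\#(G\backslash Z)=\sum_{i=1}^m \#\bigl(G^{y_i}\backslash \xi^{-1}(y_i)\bigr),
\]
where either side may be infinite. The condition is independent of the chosen representative: for $y'=gy$ we have $G^{y'}=gG^yg^{-1}$ and $\xi^{-1}(y')=g\,\xi^{-1}(y)$. Hence homogeneity need only be checked at the $y_i$.

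\emph{Step 3 (comparing counts).} By surjectivity each summand is at least $1$, so $\#(G\backslash Z)\ge m=\#(G\backslash Y)$. Equality holds exactly when every summand equals $1$, i.e.\ when $G^{y_i}$ acts transitively on $\xi^{-1}(y_i)$ for every $i$. By the representative-independence in Step 2, this is the same as homogeneity of all fibers.

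The finiteness of $G\backslash Y$ is used only so that the sum in Step 2 compares a finite number $m$ against a sum of positive terms. I expect no serious obstacle; the only points needing care are the surjectivity convention and the case where $\#(G\backslash Z)$ is infinite, which simply falls on the ``not equal'' side.
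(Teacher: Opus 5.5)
Your proof is correct and follows the paper's argument: the paper also identifies the $G$-orbits in $\xi^{-1}(G\cdot y)$ with the $G^y$-orbits in $\xi^{-1}(y)$, and concludes by comparing orbit counts one orbit of $Y$ at a time. Your explicit surjectivity assumption is well taken. The paper's first step, that $\#(G\backslash Z)=\#(G\backslash Y)$ holds exactly when each $\xi^{-1}(G\cdot y)$ is a single $G$-orbit, tacitly uses that every such preimage is nonempty. That holds in the paper's applications, where the maps are surjective resolutions.
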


\begin{proof}
We have $\#(G\backslash Z)=\#(G\backslash Y)$ if and only if the inverse image of any $G$-orbit in $Y$ is a single $G$-orbit
in $Z$.  The $G$-orbits in $\xi^{-1}(G \cdot y)$ are in bijection with the $G^y$-orbits in $\xi^{-1}(y)$, so $\xi^{-1}(G \cdot y)$ is a single
$G$-orbit if and only if $\xi^{-1}(y)$ is a single $G^y$-orbit.  The result follows.
\end{proof}

\begin{lemma} \label{l:changefiber}
Let $B$ be a linear algebraic group.  Suppose we have a Cartesian diagram
$$
\begin{CD}
\widetilde{M} @>{\widetilde{\mu}}>> \widetilde{X} \\
@VVV @VV{\pi}V \\
M = \widetilde{M}/B @>{\mu}>> X = \widetilde{X}/B,
\end{CD}
$$
where the vertical maps are principal $B$-bundles.  Suppose $\pi(\widetilde{x}) = x$, and let $\widetilde{F} = \widetilde{\mu}^{-1}(\widetilde{x})$,
$F = \mu^{-1}(x)$.  Let $\widetilde{m} \in \widetilde{F}$ map to $m \in F$.  Suppose that $\widetilde{M}$, $M$, $\widetilde{F}$ and $F$ are smooth. Then
$$
T_m F = \mathrm{Ker}(d\mu_m) \Leftrightarrow T_{\widetilde{m}} \widetilde{F} = \mathrm{Ker}(d\widetilde{\mu}_{\widetilde{m}}) .
$$
\end{lemma}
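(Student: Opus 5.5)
The plan is to compare the two tangent-space statements through the differentials of the bundle maps. Since $\pi$ and $\rho:\widetilde{M}\to M$ are principal $B$-bundles, hence smooth morphisms, the differentials $d\pi_{\widetilde x}:T_{\widetilde x}\widetilde X\to T_xX$ and $d\rho_{\widetilde m}:T_{\widetilde m}\widetilde M\to T_mM$ are surjective. Their kernels are the tangent spaces to the $B$-orbits, i.e.\ the images of $\mathfrak b=\mathrm{Lie}\,B$ under the infinitesimal actions. Because $B$ acts freely, both infinitesimal actions are injective, so each kernel is a copy of $\mathfrak b$. Equivariance of $\widetilde\mu$ gives $d\widetilde\mu_{\widetilde m}(\xi_{\widetilde m})=\xi_{\widetilde x}$ for $\xi\in\mathfrak b$. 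Hence $d\widetilde\mu_{\widetilde m}$ restricts to an isomorphism $\ker d\rho_{\widetilde m}\to\ker d\pi_{\widetilde x}$.

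The key step is to show that $d\rho_{\widetilde m}$ maps $\ker d\widetilde\mu_{\widetilde m}$ isomorphically onto $\ker d\mu_m$; this is a diagram chase using $d\mu\circ d\rho=d\pi\circ d\widetilde\mu$.
\begin{itemize}
\item \emph{Injectivity.} If $v\in\ker d\widetilde\mu_{\widetilde m}\cap\ker d\rho_{\widetilde m}$, then $v=\xi_{\widetilde m}$ for some $\xi\in\mathfrak b$, and $0=d\widetilde\mu(v)=\xi_{\widetilde x}$. Freeness of the action forces $\xi=0$, so $v=0$.
\item \emph{Surjectivity.} Given $w\in\ker d\mu_m$, lift it to $v$ with $d\rho(v)=w$. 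Then $d\pi(d\widetilde\mu(v))=0$, so $d\widetilde\mu(v)=\xi_{\widetilde x}$ for some $\xi\in\mathfrak b$. Replacing $v$ by $v-\xi_{\widetilde m}$ gives a lift lying in $\ker d\widetilde\mu$.
\end{itemize}
So $\dim\ker d\widetilde\mu_{\widetilde m}=\dim\ker d\mu_m$.

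Next I compare fibers. Since the square is Cartesian, $\rho$ restricts to an isomorphism $\widetilde F=\widetilde\mu^{-1}(\widetilde x)\to F=\mu^{-1}(x)$. Indeed, the fiber of $\widetilde M=M\times_X\widetilde X$ over the point $\widetilde x$ is $\mu^{-1}(x)\times\{\widetilde x\}$. Since $\widetilde F$ and $F$ are smooth, this isomorphism of varieties gives $d\rho_{\widetilde m}:T_{\widetilde m}\widetilde F\xrightarrow{\sim}T_mF$. So $\dim T_{\widetilde m}\widetilde F=\dim T_mF$, and $d\rho$ carries the first space onto the second.

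To conclude, recall that the inclusion $T_{\widetilde m}\widetilde F\subseteq\ker d\widetilde\mu_{\widetilde m}$ always holds, because $\widetilde\mu$ is constant on $\widetilde F$; similarly $T_mF\subseteq\ker d\mu_m$. By the two steps above, the kernels have equal dimension and the fiber tangent spaces have equal dimension. Therefore equality holds upstairs iff equality holds downstairs, which proves the lemma.

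The main obstacle is making the fiber identification precise with reduced structures. The fibers are taken as varieties (reduced), and the Cartesian product might a priori carry non-reduced structure. Using the assumed smoothness of $\widetilde F$ and $F$ handles this, since $\rho|_{\widetilde F}$ is then a bijective morphism between smooth varieties that is the base change of an isomorphism on scheme-theoretic fibers. One could alternatively avoid this: $d\rho$ maps $T\widetilde F$ injectively into $T F$, since $T\widetilde F\subseteq\ker d\widetilde\mu$ and $d\rho$ is injective on that kernel, and the two spaces have equal dimension because the fibers are isomorphic as sets and smooth.
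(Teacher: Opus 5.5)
Your proof is correct, but it takes a different route from the paper's. The paper works locally. It shrinks $X$ to a neighborhood of $x$ over which the bundle is trivial, so that $\widetilde{M}=M\times B$, $\widetilde{X}=X\times B$ and $\widetilde{\mu}=\mu\times\mathrm{id}$. It then reads off $T_{\widetilde m}\widetilde F=T_mF\times\{0\}$ and $\mathrm{Ker}(d\widetilde\mu_{\widetilde m})=\mathrm{Ker}(d\mu_m)\times\{0\}$ directly, so the equivalence holds with no dimension count.

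You instead stay intrinsic, in three steps:
\begin{itemize}
\item You use surjectivity of $d\rho$ and the identification of $\mathrm{Ker}(d\pi_{\widetilde x})$ with the image of $\mathfrak b$ to show that $d\rho_{\widetilde m}$ carries $\mathrm{Ker}(d\widetilde\mu_{\widetilde m})$ isomorphically onto $\mathrm{Ker}(d\mu_m)$.
\item You use the Cartesian property to get $\widetilde F\cong F$.
\item You conclude from the automatic inclusions of fiber tangent spaces in the kernels by comparing dimensions.
\end{itemize}

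The paper's argument is shorter and more transparent. However, as written it uses Zariski-local triviality. That holds for the Borel subgroups in the application, since connected solvable groups are special, but not for an arbitrary linear algebraic group $B$ as in the statement (e.g.\ a nontrivial finite \'etale cover is a principal bundle for a finite group that is not Zariski-locally trivial). Your argument needs only that $\rho$ is smooth and that the scheme-theoretic fibers of $\pi$ are free $B$-orbits, so it also covers \'etale-locally trivial bundles.

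You could shorten your diagram chase further. Zariski tangent spaces commute with fiber products, so $T_{\widetilde m}\widetilde M=T_mM\times_{T_xX}T_{\widetilde x}\widetilde X$. Hence $\mathrm{Ker}(d\widetilde\mu_{\widetilde m})=\{(w,0)\mid d\mu_m(w)=0\}$, which $d\rho_{\widetilde m}$ maps isomorphically onto $\mathrm{Ker}(d\mu_m)$. This uses only the Cartesian property, not the infinitesimal action or the $B$-equivariance of $\widetilde\mu$.

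Your worry about reduced structures also dissolves. The scheme-theoretic fibers are isomorphic via $\rho$ because the square is Cartesian, and an isomorphism of schemes induces an isomorphism of their reductions.
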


\begin{proof}
We may replace $X$ by an open neighborhood of $x$ over which the bundle is trivial; then our commutative diagram becomes
$$
\begin{CD}
\widetilde{M} = M \times B @>{\widetilde{\mu}}>> \widetilde{X} = X \times B \\
@VVV @VVV \\
M @>{\mu}>> X,
\end{CD}
$$
where $\widetilde{\mu}=\mu\times \mbox{id}$.
For some $b \in B$, we have $\widetilde{x} = (x, b)$  $\widetilde{F} = F \times \{ b \}$, and $\widetilde{m} = (m,b)$.
Under the identification
 $T_{\widetilde{m}} \widetilde{M} = T_m M \times T_b B$, we have
 \begin{equation}\label{eqn:tan-fib}
 T_{\widetilde{m}}\widetilde{F}=T_{(m,b)}(F \times \{ b \})=T_mF\times T_b\{b\}=T_mF\times\{0\}.
\end{equation}
 Also, $d\widetilde{\mu}_{\widetilde{m}}=d\mu_m\times\mbox{id}$,
 so
 \begin{equation}\label{eqn:ker}
\mathrm{Ker}(d\widetilde{\mu}_{\widetilde{m}})= \mathrm{Ker}(d\mu_m) \times \{ 0 \}.
\end{equation}
Comparing \eqref{eqn:tan-fib} and \eqref{eqn:ker} gives the result.
\end{proof}
 
\begin{proposition} \label{p:fiber-general}
Let $G$ be a linear algebraic group, $P$ a closed subgroup of $G$, and $R$ and $B$ closed subgroups of $P$.
Let $M$ be a right $R$-invariant locally closed subvariety of $G$,
and let $Y = MP \subset G$.
Let 
$$
\mu: Z = M \times^R P/B \to Y/B \subset G/B
$$ 
denote the multiplication map, and let $y \in G$.  Then:
\begin{enumerate}
\item $\mu^{-1}(yB/B) \cong M/R \cap yP/R$.
\item If $y \in Y$ and $p \in P$, then $\mu^{-1}(ypB/B) \cong \mu^{-1}(yB/B)$.
\item Every fiber of $\mu$ is isomorphic to a fiber $\mu^{-1}(mB/B)$ for some $m \in M$.  
\item Let $y' = yp$ for $p \in P$, and let $F = \mu^{-1}(yB/B)$, $F' =  \mu^{-1}(y'B/B)$.
Suppose that $M$ and $F$ (hence also $F'$) are smooth.  Then
$$
\mathrm{Ker}(d\mu_{z'})=T_{z'} F' \mbox{ for all } z' \in F' \Leftrightarrow \mathrm{Ker}(d\mu_{z})=T_{z} F \mbox{ for all } z \in F. 
$$
\end{enumerate}
\end{proposition}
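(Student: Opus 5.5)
The plan is to write the fiber down explicitly, then use right multiplication by $p$ and Lemma \ref{l:changefiber} to transport both the fiber and the kernel condition.

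\textbf{Part (1).} A point of $Z$ is a class $[m,qB]$ with $m\in M$, $q\in P$, modulo $(m,qB)\sim(mr,r^{-1}qB)$ for $r\in R$. Its image is $\mu[m,qB]=mqB/B$. So $[m,qB]$ lies in $\mu^{-1}(yB/B)$ exactly when $mq\in yB$, i.e.\ $m\in yBq^{-1}\subset yP$.
\begin{itemize}
\item Send $[m,qB]\mapsto mR\in M/R$. This is well defined because changing the representative replaces $m$ by $mr$. The image lies in $yP/R$ (note $yP$ is right $R$-stable since $R\subset P$).
\item Inverse: given $m\in M\cap yP$, the condition $mq\in yB$ forces $qB=m^{-1}yB$. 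This is a well-defined point of $P/B$ because $m^{-1}y\in P$.
\item Replacing $m$ by $mr$ changes $m^{-1}yB$ to $r^{-1}m^{-1}yB$, which is compatible with the equivalence relation.
\end{itemize}
This gives a bijection $\mu^{-1}(yB/B)\cong (M\cap yP)/R$, which is what the statement writes as $M/R\cap yP/R$. To make it an isomorphism of varieties, I would express it through the isomorphism $M\times^R P\cong \{(m,g): m\in M,\ g\in mP\}$, $[m,q]\mapsto(m,mq)$, and then pass to the quotient by $B$. The only care needed is that these quotient maps are locally trivial, which I take as standard for the mixed-space setting.

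\textbf{Part (2).} Since $ypP=yP$, part (1) gives the same variety $M\cap yP/R$ for both points. More concretely, on the fiber the map $[m,qB]\mapsto[m,qpB]$ carries $\mu^{-1}(yB/B)$ isomorphically onto $\mu^{-1}(ypB/B)$.

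\textbf{Part (3).} If $y\notin Y$, part (1) shows the fiber is empty. Otherwise write $y=mp$ with $m\in M$, $p\in P$, and apply (2) to reduce to the fiber over $mB/B$.

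\textbf{Part (4).} This is the main point. I would pass to the $B$-bundle picture of Lemma \ref{l:changefiber}. Set $\widetilde Z=M\times^R P$ and $\widetilde\mu:\widetilde Z\to G$, $[m,q]\mapsto mq$. Then $Z=\widetilde Z/B$, $G\to G/B$ is a principal $B$-bundle, and the square is Cartesian; this uses the identification in (1), since $\widetilde Z\cong\{(m,g):g\in mP\}$.

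Right multiplication by $p$ gives an automorphism $\phi_p:[m,q]\mapsto[m,qp]$ of $\widetilde Z$ satisfying $\widetilde\mu\circ\phi_p=\rho_p\circ\widetilde\mu$, where $\rho_p$ is right multiplication on $G$. Both $\phi_p$ and $\rho_p$ are isomorphisms. Therefore $\phi_p$ maps $\widetilde F=\widetilde\mu^{-1}(y)$ onto $\widetilde F'=\widetilde\mu^{-1}(yp)$, and $d\phi_p$ maps $\ker d\widetilde\mu$ onto $\ker d\widetilde\mu$ and $T\widetilde F$ onto $T\widetilde F'$. Hence the strongly reduced condition holds on $\widetilde F$ if and only if it holds on $\widetilde F'$.

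Lemma \ref{l:changefiber}, applied once at $y$ and once at $yp$, converts each of these statements into the corresponding statement for $F$ and $F'$. This needs $\widetilde Z$, $Z$, $\widetilde F$, $F$ to be smooth. Smoothness of $\widetilde Z$ and $Z$ follows from smoothness of $M$, since they are bundles over $M/R$ with smooth fibers. Smoothness of $\widetilde F\cong F$ is given.

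The main obstacle is the bookkeeping that the square with $\widetilde Z$ really is Cartesian, and that $\widetilde F\cong F$ as varieties rather than just as sets. The fiber of $\widetilde\mu$ over $y$ is $\{[m,m^{-1}y]\}\cong (M\cap yP)/R$, the same variety as $F$ in (1), so I expect this to go through without surprises.
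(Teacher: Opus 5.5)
Your proposal is correct and follows essentially the paper's argument: the mutually inverse maps $[m,qB]\mapsto mR$ and $mR\mapsto[m,m^{-1}yB]$ for (1), and for (4) the Cartesian square over $M\times^R P$, right multiplication by $p$ to match kernels and fiber tangent spaces, and Lemma~\ref{l:changefiber} applied at $y$ and at $yp$. Deducing (2) straight from (1) via $ypP=yP$ is a harmless shortcut of the paper's composite of isomorphisms. One small correction: your ``concrete'' map $[m,qB]\mapsto[m,qpB]$ is not well defined on the coset $qB$ unless you pin $q=m^{-1}y$. The isomorphism is $[m,m^{-1}yB]\mapsto[m,m^{-1}ypB]$, which depends on $y$ itself and not just on $yB$, as the paper's subsequent remark stresses.
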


\begin{proof}
If $\mu([m, pB/B]) = yB$, then $mp = y b$ for some $b \in B$; as $B \subset P$, we see that $m \in yP$.  Therefore, we can define
$\ga: \mu^{-1}(yB) \to M/R \cap yP/R$ by $\ga([m, pB/B]) = mR/R$.   If $mR/R \in M/R \cap yP/R$,
then $y^{-1}m$, and therefore $m^{-1} y$, are in $P$.  Therefore we can define $\gb: M/R \cap yP/R \to \mu^{-1}(yB)$ by
$\gb(mR/R) = [m, m^{-1} yB/B]$.  The compositions $\ga \circ \gb$ and $\gb \circ \ga$ are each the identity map, so
$\ga$ and $\gb$ are inverse isomorphisms, proving (1).

 We have a Cartesian diagram
\begin{equation} \label{e:fiber-general}
\begin{CD}
M \times^R P @>{\tilde{\mu}}>> Y = MP \\
@VVV    @VVV \\
M \times^R P/B @>{\mu}>> Y/B.
\end{CD}
\end{equation}
For any $p \in P$ and any $y \in Y$, we claim that we have a composition of isomorphisms: 
\begin{equation} \label{e:comp-iso}
\begin{CD}
\mu^{-1}(yB/B) @>>> \tilde{\mu}^{-1}(y) @>>> \tilde{\mu}^{-1}(yp) @>>>  \mu^{-1}(ypB/B).
\end{CD}
\end{equation}
Indeed, the first and third maps exist and are isomorphisms because the diagram is Cartesian.  The middle map is
right multiplication by $p$; it is an isomorphism 
because $\tilde{\mu}$ is right $P$-equivariant.  This proves the claim; (2) follows.  If $y \in Y$, then
$y = mp$ for some $m \in M$, so by (2), $\mu^{-1}(yB/B) \cong \mu^{-1}(mB/B)$, proving (3).

For (4), we apply Lemma \ref{l:changefiber} to the Cartesian diagram \eqref{e:fiber-general}.  
Let $\tilde{F} = \tilde{\mu}^{-1}(y)$ and $\tilde{F}' = \tilde{\mu}^{-1}(y')$.
Then
\begin{align*}
\mathrm{Ker}(d\mu_{z})=T_z F \mbox{  for all  }z \in F & \Leftrightarrow \mathrm{Ker}(d\tilde{\mu}_{\tilde{z}})=T_{\tilde{z}} \tilde{F} \mbox{ for all }\tilde{z} \in \tilde{F} \\
& \Leftrightarrow \mathrm{Ker}(d\tilde{\mu}_{\tilde{z}'})=T_{\tilde{z}'} \tilde{F}' \mbox{ for all }\tilde{z}' \in \tilde{F}' \\
& \Leftrightarrow  \mathrm{Ker}(d\mu_{z'})=T_{z'} F' \mbox{  for all  }z' \in F' .
\end{align*}
Here, the first and third equivalences hold by Lemma \ref{l:changefiber}.  To see the middle equivalence, let
$\rho$ denote the map on $M \times^R P$ or $Y$ given by right multiplication by $p$, and let $z' = zp = \rho(z)$.
Since $\tilde{\mu} \circ \rho = \rho \circ \tilde{\mu}$, the map $d \rho_z$ takes
$\mathrm{Ker}(d\tilde{\mu}_{\tilde{z}})$ to $\mathrm{Ker}(d\tilde{\mu}_{\tilde{z}'})$ and $T_{\tilde{z}} \tilde{F} $
to $T_{\tilde{z}'} \tilde{F}' $.  The middle equivalence follows.  This proves (4).
\end{proof}

In the setting of the previous proposition, we call $m \in M$ a good base point of the map $\mu$.  Part
(3) of the proposition states that every fiber of $\mu$ is isomorphic to a fiber over a good base point.  Note that
the notion of good base point depends on the particular expression for $Z$.  Section \ref{ss:isomorphic}
contains examples where $Z = M \times^R P/B \cong M' \times^{R'} P'/B$.  In this case, the set of good base points is $M$ or $M'$,
depending on which expression for $Z$ is being considered.

\begin{remark}
In \eqref{e:comp-iso},
the fiber $\mu^{-1}(ypB/B)$ depends on the element
$y$ and not merely on the coset $yB/B$. 
\end{remark}

\section{K-orbit closures} \label{s:background}
In this section, we recall some general results about $K$-orbits in the flag variety $G/B$.  In particular, we recall the parametrization of these orbits, and
 root types with respect to a $K$-orbit.  Then we describe these results concretely
in the case where $G = GL(n)$ and $K = GL(p) \times GL(q)$ where $p+q=n$.  Our main references for this section are \cite{Lar,RS90,RS93,Vog3}.

\subsection{Definitions and notation} \label{ss:definitions}
Let $G$ be a connected complex reductive group with an involution $\theta$.  The involution $\theta$ on $G$ defines an involution on $\mathfrak{g} = \mbox{Lie } G$ which we also denote by $\theta$.  Let $K$ be a closed subgroup of $G$ satisfying
$G^{\theta}_0 \subseteq K \subseteq G^{\theta}$ where $G^{\theta}_0$ is the identity component of  \(G^{\theta}\); then $\fk = \mbox{Lie }K = \fg^{\theta}$.
Let $\fp$ denote the $-1$-eigenspace of $\theta$ in $\fg$.
The group $T$ is regular in $G$, which
means that its centralizer $H$ is a maximal torus of $G$ (cf.\ \cite[2.3]{Brion1999}).   If $V$ is a representation of $T$ (resp.~$H$),
we let $\Phi(V)$ (resp.~$\Phi(V,H)$) denote the multiset of $T$-weights (resp.~$H$-weights) of $V$, counted with multiplicity.
For $G = GL(n)$ and $K = GL(p) \times GL(q)$, we have $H = T$.

Let $W = N_G(H)/H$.  Write $\dot{w}$ for a representative in $N_G(H)$ of $w \in W$.  We will frequently abuse notation
and write $w$ in place of $\dot{w}$.
Given $g \in G$ and a subspace $\fa$ of $\fg$, write ${}^g \fa = (\Ad \ g) (\fa)$.  If $\fa$ is $H$-stable and $x \in W$,  write
${}^x \fa = {}^{\dot{x}} \fa$.

Let $B$ be a $\theta$-stable 
Borel subgroup containing $H$; then $B^{\theta}_0$ is a Borel subgroup of $K$.  
Let $X = G/B$.  Because $T$ is regular, we have $X^T = X^H$
(see \cite{Brion1999}, \cite{Wys2016}); the analogous equality holds for partial
flag varieties as well. Let $Y_w=\overline{BwB}/B$ denote the Schubert variety in $X$ associated to $w\in W$.

 Let $\Phi = \Phi(G,H)$ denote the set of roots of $G$ relative to $H$, with positive system $\Phi^+$
 corresponding to the root spaces in $\fb = \mbox{Lie }B$.
Let $\Delta = \{ \ga_1, \ldots, \ga_\ell \}$ denote the corresponding set of simple roots.  Let
$s_{\ga} \in W$ denote the reflection corresponding to $\ga \in \Phi$.  Write $s_i = s_{\ga_i}$, and let
 $S = \{ s_1, \ldots, s_\ell \}$ be the set of simple reflections.  If $\ga \in \Phi$, let
$\fg_{\alpha}$ denote the corresponding root space,
and fix $Z_{\alpha} \in \fg_{\alpha}$.  

Let  $I = \{ s_{i_1}, \ldots, s_{i_k} \} \subset S$ be a set of simple reflections.  We will sometimes identify $I$ with the set $\{ i_1, \ldots, i_k \}$.
Let $W_I = W_{i_1, \ldots, i_k}$ denote the subgroup generated by $I$, and let $w_I$ denote the longest element of $W_I$.  
Let $W^I$ be the set of minimum length representatives of $W/W_I$. 
We write $P_I = P_{i_1, \ldots, i_k} =BW_IB$ for the standard parabolic subgroup of $G$ corresponding to $I$.  
If $I$ consists of a single element $s = s_{\ga}$, we write
$P_I = P_s = P_{\ga}$.  

\subsection{$K$-orbit closures for general $G$} \label{ss:K-orb}
The group $K$ acts on $X = G/B$ with finitely many orbits (see \cite{Wol74}, \cite{Mat79}). Write $V=K\backslash G/B$. If
$\mathcal{V}=\{g\in G\;|\;g^{-1}(\theta(g))\in N_G(H)\}$, then the inclusion $\mathcal{V}\hookrightarrow G$ induces an isomorphism $K\backslash\mathcal{V}/H\cong V =K\backslash G/B$.
Let $\dot{v} = g(v)$ denote a representative in $G$ or $\mathcal{V}$ for $v \in V$; by abuse of notation,
we sometimes use the same notation for an element of $V$ and a representative in $G$ or $\mathcal{V}$.
See \cite{Yam} for representatives of $V$ in $G$; \cite{Wys} is a good reference for the case
of $G = GL(n)$ and $K = GL(p) \times GL(q)$.

Let $G_v = \overline{K \dot{v} B}$, and $X_v=\overline{K\dot{v}B}/B$.  Define the length of $v$ to be $\ell(v) = \dim X_v$.
Define $\phi: V \to W$ by the rule $\phi(v) = g^{-1}(\theta(g)) \mbox{ mod } H$, where $g = g(v) \in \mathcal{V}$.

Because $H$ is $\theta$-stable, the involution $\theta$ acts on the set $\Phi$ of roots; if we view the root $\alpha$ as a character of $H$, then
$\theta(\alpha)(h) = \alpha(\theta(h))$.
Given $v \in V$, we define an involution $\theta_v$ of $\Phi$ by the formula $\theta_v\cdot\alpha = \phi(v) \left(\theta(\alpha)\right)$.  
Let $g = g(v)$. 
We say a simple root $\alpha \in \Delta$ has $v$-type (a) real if $\theta_v\cdot\alpha=-\alpha$, (b) compact imaginary if $\theta_v\cdot\alpha=\alpha$ and $\Ad(g)(Z_\alpha) \in\mathfrak{k}$, (c) noncompact imaginary if $\theta_v\cdot\alpha=\alpha$ and $\Ad(g)(Z_\alpha) \in\mathfrak{p}$, and (d) complex if $\theta_v\cdot\alpha\neq\pm\alpha$. 
We call $\alpha$ of type (d) 
a complex ascent if $\theta_v\cdot\alpha>0$, and a complex descent if $\theta_v\cdot\alpha<0$. (See \cite[Definition 2.5]{Vog3},\cite[\S2.4]{RS93}, or equivalently, 
\cite{Springer1985}.)  If $s = s_{\alpha}$ for $\alpha \in \Delta$, we say that the $v$-type of $s$ is the $v$-type of $\alpha$.  Note that if $\alpha$ is $v$-compact
imaginary, and $P_\alpha$ is the corresponding standard parabolic subgroup, then for every $\dot{v} \in G$, we have
\begin{equation} \label{e:compact-imag}
\dot{v} P_{\alpha}/B \subseteq K \dot{v} B/B
\end{equation}
by \cite[Lemma 5.1]{Vog3} (see, e.g., \cite[2.4]{RS93}).

There is a monoid product $*$ on $W$ given by
$$
w*s = \begin{cases}
ws & \mbox{ if } \ell(ws) > \ell(w) \\
w & \mbox{ if } \ell(ws) < \ell(w)
\end{cases}
$$
for $w \in W, s \in S$.
There is a right monoid action of $W$ on $V$: if $v \in V$ and $w \in W$, then $v*w = u$ if
$K \dot{u} B$ is the unique dense $K \times B$-orbit in $(K \dot{v} B)(B \dot{w} B)$
(see  \cite{RS93}).  If $I \subset S$, then $v*w_I$ corresponds to the open $K$-orbit in $K \dot{v} P_I/B$.
Given a $K$-orbit closure $\overline{K\dot{v}B}/B$ for $v\in V$, one can always find $v_0\in V$ and $w_1,\ldots,w_m\in W$ such that 
the orbit closure $X_{v_0}$ is smooth (in fact, such that $X_{v_0}$ is a closed orbit), and $v=v_0*w_1*\cdots*w_m$ (see \cite[Theorem 4.6]{RS90}).

The $\tau$-invariant of $w \in W$ or $v \in V$ is defined as
\[
\tau(w) =  \{s \in S \;|\;w*s=w\} \mbox{   and   }   \tau(v)=\{s \in S \;|\;v*s=v\}.
\]
for $w \in W$, $v \in V$.  Equivalently, 
$\tau(w)$ is the right descent set of $w$; in the language of \cite{Vog3}, $\tau(v)$ is the weak $\tau$-invariant of the trivial local system on the orbit $K \dot{v} B/B$.
Since
 $K\dot{v}B$ is the open $K \times B$-orbit
in $K \dot{v} P_{\tau(v)}$, we have
$\overline{K \dot{v} B} = \overline{K \dot{v} P_{\tau(v)}}$.  
Hence $\overline{K \dot{v} B}$ is right $P_{\tau(v)}$-invariant.

For $s \in S$, there is a proper surjective morphism
\begin{equation} \label{e:morphism}
\mu: G_v \times^B P_s/B \to X_{v \star s}.
\end{equation}
If $s$ has $v$-type real, compact imaginary, or complex descent, then $v \star s = v$ (that is, $s \in \tau(v)$).  If $s$ has
$v$-type noncompact imaginary or complex ascent, then $v \star s > v$; in this case, the map $\mu$ is generically finite.
If the $v$-type of $s$ is complex ascent, then $\mu$ is birational.
The $v$-noncompact imaginary simple reflections (or roots) are classified into two types.  For type I, the map $\mu$ is birational;
for type II, $\mu$ is generically 2-1.

Let $P = P_I$ and let $\rho: K\backslash G/B \to K\backslash G/P$ denote the 
natural surjection of finite sets.  
Let $V_I = \{ v \in V \mid v * w_P = v \}$.  For $v \in V_I$, let 
$\Gamma(v) = \{ w \in V \mid w * w_P = v \}$.
By \cite[Section 3.4]{RS93},
the map $V_I \to K \backslash G/P$, $v \mapsto K\dot{v}P/P$, is a bijection, so
$G = \sqcup_{v \in V_I} K \dot{v} P$.  Moreover, if $v \in V_I$, then
$$
\rho^{-1}(K \dot{v} P) = \sqcup_{w \in \Gamma(v)} K \dot{w} B.
$$
Given $x \in V$, we have $K \dot{x} P = K \dot{v} P$ for $v \in V_I$ if and only if $x \in \Gamma(v)$.
(Indeed, if $K \dot{x} P = K \dot{v} P$, then $K \dot{x} B \subseteq \rho^{-1}(K \dot{x} P) = \rho^{-1}(K \dot{x} B)$, so $x \in \Gamma(v)$.
Conversely, if $x \in \Gamma(v)$ then $\rho(K \dot{x} B) =  K \dot{v} P$.)  
This implies that $K \dot{x} P = K \dot{y} P$ if and only if $x * w_P = y * w_P$.

\subsection{$K$-orbit closures for $G = GL(n)$ and $K = GL(p) \times GL(q)$} \label{ss:K-orbit-particular}
Unless otherwise noted, for the remainder of Section \ref{s:background}, $G = GL(n)$, $\theta$ is the involution
given by conjugation by 
$
I_{p,q}=
\begin{bmatrix}
I_p & \\
 & -I_q
\end{bmatrix},
$
where $I_m$ indicates the $m\times m$ identity matrix, and $p+q = n$,
and $K = G^{\theta} \cong GL(p) \times GL(q)$
is the block diagonal subgroup of $G$ with blocks of sizes $p$ and $q$.  Let $B$ and $H = T$ denote the subgroups of upper triangular matrices and diagonal matrices in $G$, respectively.  
The simple roots are $\ga_i = \gre_i -  \gre_{i+1} $ for $i = 1, \ldots, n-1$.  Under the isomorphism $W \cong S_n$, 
$s_i$ is the transposition $(i, i+1)$.  For this $G$ and $K$, each $X_v$ is normal (see \cite{Brion2001}, Theorem 6 and the discussion following Corollary 2;
cf.~\cite{WWY}, which refers to \cite{Brion2001}.)  

This following (known) lemma implies that for this $G$ and $K$,
the $K$-orbits in $G/P$ are equivariantly simply connected, i.e., there are no nontrivial $K$-equivariant coverings of orbits.  
For lack of a precise reference we give the proof.

\begin{lemma} \label{l:equiv-sc}
Let $G = GL(n)$ and $K = GL(p) \times GL(q)$ as above.  The stabilizer in $K$ of any point in $G/P$ is connected.
\end{lemma}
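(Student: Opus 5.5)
The approach is to identify the stabilizer explicitly via the linear algebra of partial flags. Let $V = \C^n = V_+ \oplus V_-$, where $V_+ = \C^p$ and $V_- = \C^q$ are the $\pm 1$ eigenspaces of $I_{p,q}$. Then $K = GL(V_+) \times GL(V_-)$ is the group of automorphisms of $V$ preserving this decomposition. A point of $G/P$ is a partial flag $F_\bullet = (0 \subset F_1 \subset \cdots \subset F_k = V)$ of fixed dimensions. Its stabilizer $K_{F}$ is therefore the group of $g \in GL(V)$ that preserve $V_+$, $V_-$ and every $F_i$. This is the automorphism group of the representation of a finite poset, or quiver with relations, on $V$ given by the subspaces $V_\pm$ and $F_i$. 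It is a Zariski open subset of the linear space
\[
E = \{\, A \in \mathrm{End}(V) \mid A V_\pm \subseteq V_\pm,\ A F_i \subseteq F_i \text{ for all } i \,\},
\]
namely the invertible elements $E \cap GL(V)$.

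The key steps are as follows. First, $E$ is a finite-dimensional associative algebra: it is closed under composition and contains the identity. Second, the group of units of a finite-dimensional associative $\C$-algebra $E$ is $E \setminus \{\det = 0\}$, where $\det$ is the determinant of left multiplication on $E$. An element of $E$ is invertible in $E$ exactly when it is invertible in $\mathrm{End}(V)$, because the inverse is a polynomial in the element by Cayley–Hamilton and hence lies in $E$. So $K_F = E \cap GL(V)$, which is the complement of a hypersurface in an irreducible affine space. Third, a nonempty Zariski open subset of an irreducible variety is irreducible, and it is nonempty since it contains the identity. Hence $K_F$ is irreducible, in particular connected, in the Zariski topology. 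Connectedness in the classical topology follows as well, since the complement of a complex hypersurface in $\C^N$ is connected.

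I expect no serious obstacle. The one point needing care is the claim that stabilizing the flag together with the decomposition is exactly the unit group of $E$, i.e.\ that $g^{-1}$ again preserves each subspace. This holds because $g$ is injective and preserves each finite-dimensional subspace, so $gF_i = F_i$; then $g^{-1}F_i = F_i$ as well. The Cayley–Hamilton remark above gives the same conclusion. To conclude equivariant simple connectedness, note that $K$-equivariant coverings of an orbit $K/K_F$ correspond to finite-index subgroups of $K_F$ containing $(K_F)_0$, and connectedness of $K_F$ leaves only the trivial covering.
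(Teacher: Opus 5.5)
Your proof is correct and is essentially the paper's argument: your algebra $E$ of endomorphisms preserving $V_\pm$ and the flag is exactly $\fk \cap \Ad(g)\fp$. Like the paper, you conclude by observing that the stabilizer $K \cap gPg^{-1}$ is the set of invertible elements of this vector space, a nonempty Zariski-open subset, hence connected; your Cayley--Hamilton remark and the classical-topology comment only spell out details the paper leaves implicit.
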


\begin{proof}
The stabilizer in $K$ of $gP$ is $K \cap gPg^{-1}$.  Since $K = GL(p) \times GL(q)$, and a Levi factor of $P$ is a product of
general linear groups, the groups $K$ and $gPg^{-1}$ can each be viewed as open subsets of their Lie algebras $\fk$
and $\Ad(g) \fp$.  Therefore, $K \cap gPg^{-1}$ is an open subspace of the vector space $\fk \cap \Ad(g) \fp$, so it is connected.
\end{proof}

The set $V = K \backslash G/B$ can be parametrized by {\em clans}, which are defined as follows.
A clan $\gamma$ is a sequence of symbols $(a_1, \ldots, a_n)$, where each symbol is either a sign (that is $+$ or $-$) or an integer, such
that each integer occuring in $\gamma$ occurs exactly twice; and 
if $n(\gamma,+)$ (resp.~$n(\gamma,-)$) are the number of occurrences of the symbol $+$ (resp.~$-$) in the clan $\gamma$, then
$n(\gamma,+) - n(\gamma,-) = p-q$.  Clans $\gamma = (a_1, \ldots, a_n)$ and $\gamma' = (a_1', \ldots, a_n')$ are considered equal if
the following hold: $a_i$ is a sign if and only if $a'_i$ is a sign, and in that case $a_i = a'_i$; and  $a_i = a_j$ are equal integers if and only if
$a'_i = a'_j$ are equal integers.  For example, if $n = 7$, $p = 4$, $q = 3$, then
$(1,+,2,1,2,-,+)$ and $(2,+,1,2,1,-,+)$ are equal clans.  If $a_i$ is a natural number, the $a_j$ which is equal to $a_i$ is called the mate of $a_i$.  
We identify $v \in V$ with the corresponding clan,
and if we need to specify $p$ and $q$, we will write
$V_{p,q}$ for $V$.  
For brevity, we frequently omit commas, e.g.~we write $(1+1)$ instead of $(1,+,1)$.  The closed orbits correspond to clans whose entries are all signs.
The open $K$-orbit (i.e,.~the unique maximal orbit in the closure
ordering) corresponds to the clan
$$
v = \max(V_{p,q}) = \begin{cases}
(1\;2\;\cdots\;q\;+\;\cdots\;+\;q\;\cdots\;2\;1)  \mbox{  if  } p\geq q \\
(1\;2\;\cdots\;p\;-\;\cdots\;-\;p\;\cdots\;2\;1)   \mbox{ if } q\geq p.
\end{cases}
$$

Given clans $v^i \in V_{p_i, q_i}$, we can form a clan $v = (v^1, \ldots, v^r) \in V = V_{p,q}$ (where $p = \sum p_i$ and $q = \sum q_i $)
by concatenating the clans $v^i$.
We refer to $v^i$ as the $i$-th block of $v$; this depends on the particular
expression for $v$ as a concatenation.  We often write $v =(v^1 \mid  v^2 \mid \cdots \mid v^r)$ to emphasize the block structure.
 
The map $\phi: V \to W$ takes the clan $v = (a_1, \ldots, a_n)$ to the involution $\phi(v) \in W$ which switches $i$ and $j$ whenever $a_i$ and $a_j$ are equal integers,
and fixes $i$ if $a_i$ is a sign.  Since $\theta(\alpha) = \alpha$ for all $\alpha \in \Phi$,
 the action $\theta_v$ on $\Phi$
satisfies $\theta_v\cdot \alpha = \phi(v) (\alpha)$.  Hence, a simple root $s_i$ is imaginary if and only if $a_i$ and $a_{i+1}$ are signs.

The monoid action on clans is as follows (see \cite{MT}).  Let $v = (a_1, \ldots, a_n)$ be a clan.  
If $s_i \not\in \tau(v)$, then $v*s_i> v$.
This occurs if $\alpha_i$ is (1) a complex ascent, or 
(2) imaginary and noncompact.  In terms of the entries of $v$, this means:
\begin{enumerate}
\item \begin{enumerate}
\item $a_i$ and $a_{i+1}$ are unequal natural numbers, such that the mate of $a_i$ occurs before the mate of $a_{i+1}$;
\item $a_i$ is a sign, $a_{i+1}$ is a number, and the mate of $a_{i+1}$ occurs after position $i+1$;
\item $a_i$ is a number, $a_{i+1}$ is a sign, and the mate of $a_i$ occurs before position $i$;
\end{enumerate}
\item $a_i$ and $a_{i+1}$ are opposite signs.
\end{enumerate}
The clan $v * s_i$ differs from $v$ only in the $i$ and $i+1$ positions.  In case (1), $v*s_i$ is obtained by switching the $i$ and $i+1$ entries of $v$; in case (2), $v*s_i$ is obtained by replacing the $i$ and $i+1$ entries of $v$ with equal numbers.  
If both $a_i$ and $a_{i+1}$ are both signs, then as noted above, $s_i$ is imaginary.  In this case, if the signs are opposite then $s_i$ is noncompact imaginary, 
since $v*s_i > v$; if the signs are the same, then $s_i$ is compact imaginary, since $v*s_i = s_i$.  For this $G$ and $K$, all imaginary noncompact roots are of type I (cf.~\cite[Corollary 2]{Brion2001}).

\subsection{Smooth orbit closures} \label{ss:smooth}
The $K$-orbit closure $X_{v}$ is smooth if and only if 
$v = (v^1 \mid  \cdots \mid v^r)$, where each $v^j = \max(V_{p_j, q_j})$ (this is essentially stated in the proof of the main theorem of \cite{Mcg}).
Specifying such a $v$ is equivalent to specifying  decompositions $p = p_1 + p_2 + \cdots + p_r$ and $q = q_1 + q_2 + \cdots q_r$ where the
$p_i$ and $q_i$ are nonnegative integers and each $p_i+q_i>0$.

If $v =  (v^1 \mid  \cdots \mid v^r) = (a_1, \ldots, a_n)$, we say that $s_i$ straddles two blocks if $a_i$ is the last entry of a block and $a_{i+1}$
is the first entry of the next block.  If $X_v$ is smooth and each $v_j = \max(V_{p_j, q_j})$ then $S \setminus \tau(v)$ consists of
the $s_i$ which straddle two blocks such that at least one of which has more than one entry, or such that the blocks are a pair of opposite signs.
For example, if 
$v = (1+1| 2332| 4-4) = (v^1 \mid v^2 \mid  v^3)$, where $v^1= (1+1) = \max(V_{2,1})$, $v^2 = (2332) = \max(V_{2,2})$, and $v^3 =(4-4) =  \max(V_{1,2})$, then
$S \setminus \tau(v) = \{s_3, s_7 \}$.

\begin{lemma} \label{l:concatenation}
Suppose that $v \in V_{p,q}$ is of the form $v = (v^1 \mid \cdots \mid v^r)$, where each $v^k \in V_{p_k, q_k}$.  Suppose that $y \in V_{p,q}$ and $y \leq v$.
Then $y = (y^1 \mid \cdots \mid y^r)$, where each $y^i\leq v^i$ is in $V_{p_i, q_i}$. 
\end{lemma}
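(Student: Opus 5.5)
Let $n_k = p_k + q_k$ and $N_k = n_1 + \cdots + n_k$. The plan is to express ``$y$ splits into blocks compatible with $v$'' as a geometric condition that is closed and $K$-stable, and then to verify that $K\dot v B/B$ satisfies it. Let $E_k = \langle e_1,\dots,e_{N_k}\rangle$ be the standard coordinate subspaces, and set $E_k^+ = E_k \cap \C^p$, $E_k^- = E_k\cap \C^q$, where $\C^n = \C^p\oplus\C^q$ is the eigenspace decomposition for $I_{p,q}$. A point of $X$ is a flag $F_\bullet$. Consider the subset
$$
Z = \{ F_\bullet \in X \mid F_{N_k} = (F_{N_k}\cap \C^p)\oplus (F_{N_k}\cap\C^q),\ \dim F_{N_k}\cap\C^p = p_1+\cdots+p_k \ \text{for all } k\}.
$$
This set is $K$-stable because $K$ preserves $\C^p$ and $\C^q$. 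It is closed: the splitting condition together with the dimension condition is equivalent to the inequalities $\dim F_{N_k}\cap \C^p \ge p_1+\cdots+p_k$ and $\dim F_{N_k}\cap\C^q \ge q_1+\cdots+q_k$ (the sum of the two intersection dimensions is at most $N_k$, which forces equality), and these inequalities are closed conditions.

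Next we check that $\dot v B/B \in Z$ for the standard representative $\dot v$ of $v$ (see \cite{Yam}, \cite{Wys}). In that representative, the flag attached to a clan is spanned successively by vectors indexed by positions: for a sign at position $i$, a standard basis vector of $\C^p$ or $\C^q$; for a mated pair $i<j$, the vectors $e_a+e_b$ at position $i$ and $e_a - e_b$ at position $j$ (suitably ordered), with $e_a\in\C^p$ and $e_b\in \C^q$. Because each block $v^k$ is itself a clan, all mates lie inside the same block, so $F_{N_k}$ is spanned by the vectors of blocks $1,\dots,k$. This span is the direct sum of its $\C^p$- and $\C^q$-parts, of dimensions $\sum_{i\le k} p_i$ and $\sum_{i\le k} q_i$, by the signature count in each block. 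Since $Z$ is closed and $K$-stable, it follows that $X_v\subseteq Z$, and hence $K\dot y B/B\subseteq Z$.

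Finally, we read off the block structure of $y$. For $F_\bullet \in Z$ with $F_\bullet=\dot y B/B$, the clan $y$ is recovered by the Yamamoto/Wyser recipe from the numbers $\dim F_i\cap\C^p$, $\dim F_i\cap \C^q$, and $\dim(F_i+\C^p)\cap F_j$, and so on. The conditions at $i=N_k$ imply that no pair in $y$ straddles position $N_k$: a mated pair $i\le N_k<j$ would produce a vector in $F_{N_k}$ not lying in $(F_{N_k}\cap\C^p)\oplus(F_{N_k}\cap\C^q)$. This gives $y=(y^1\mid\cdots\mid y^r)$, and the signature count gives $y^k\in V_{p_k,q_k}$. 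To show that $y^k\le v^k$, note that $Z$ fibers $K$-equivariantly over the partial flags $(F_{N_k})$; the set of such split partial flags forms a single $K$-orbit, and its stabilizer $K\cap P$ has Levi factor $\prod_k GL(p_k)\times GL(q_k)$. The fiber is $\prod_k \mathrm{Fl}(\C^{n_k})$, on which the $K$-orbits correspond to tuples $(y^1,\dots,y^r)$, and $K$-orbit closures in $Z$ correspond to products of closures in the factors. Since $X_v \cap$ (fiber) is the product of the $X_{v^k}$, the inclusion $y\le v$ forces $y^k\le v^k$ for each $k$. The step I expect to be the main obstacle is making this last identification rigorous, namely that the closure order on $Z \cong K\times^{K\cap P}\prod_k \mathrm{Fl}(\C^{n_k})$ is the product order. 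I would try to handle it using that $Z$ is an associated bundle over a single closed $K$-orbit, so that orbit closures in $Z$ correspond to $(K\cap P)$-orbit closures in the fiber, and the unipotent radical acts trivially on the fiber.
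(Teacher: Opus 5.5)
Your proposal is correct and follows essentially the paper's route. The paper cites \cite[Prop.~4.3.1]{Lar} for $X_v = K\times^{P_K}(X_{v^1}\times\cdots\times X_{v^r})$; your set $Z\cong K\times^{K\cap P}\prod_k \mathrm{Fl}(\C^{n_k})$, together with the check $\dot vB/B\in Z$, effectively reproves this. The paper then argues exactly as you suggest: closed $K$-stable subsets correspond to closed $P_K$-stable subsets of the fiber, on which $\prod_i GL(p_i)\times GL(q_i)$ acts factorwise (the unipotent radical acts trivially), so the step you flag as the main obstacle is immediate from the fact that the closure of a product is the product of the closures.

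Two points are worth writing out. First, under this identification $K\dot yB/B$ corresponds to the tuple $(y^1,\ldots,y^r)$, and $K\dot vB/B$ to $(v^1,\ldots,v^r)$; this holds because the Yamamoto representative of a concatenated clan induces a representative of the $k$-th block on $F_{N_k}/F_{N_{k-1}}$. Second, the base point of the base $K$-orbit should be the split flag $\langle e_1,\ldots,e_{p_1+\cdots+p_k}\rangle\oplus\langle e_{p+1},\ldots,e_{p+q_1+\cdots+q_k}\rangle$, not your coordinate subspace $E_k$, which in general has the wrong signature.
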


\begin{proof}
By \cite[Proposition 4.3.1]{Lar}, we have \(X_v=K\times^{P_K}(X_{v^1}\times\cdots\times X_{v^h})\), where \(P\) is the standard parabolic subgroup corresponding to \(S\smallsetminus\left\{n_1,n_1+n_2,\ldots,n-n_h\right\}\), and \(P_K=P\cap K\) is a parabolic subgroup of \(K\).
The \(K\)-stable subspaces of \(X_v\) correspond to the \(P_K\)-stable subspaces of \(X_{v^1}\times\cdots\times X_{v^h}\).
Since \(P_K\) contains a subgroup which acts as \(GL_{p_i}\times GL_{q_i}\) on \(X_{v^i}\), a closed \(K\)-stable subspace of \(X_v\) is equal to \(K\times^{P_K}(X_{y^1}\times\cdots\times X_{y^h})\) for some \(X_{y^i}\subseteq X_{v^i}\).
The claim follows.
\end{proof}

The following lemma will be used in computing tangent spaces of resolutions 
(see Theorem \ref{t:symmetric-mu} and Corollary \ref{c:T-weights}).

\begin{lemma} \label{l:tau-para}
If $X_{v_0}$ is smooth, then $G_{v_0} = \overline{K \dot{v}_0 B} = K \dot{v}_0 P_{\tau(v_0)}$.
Moreover, if $I_0$ is a set of simple reflections containing $\tau(v_0)$, then $G_{v_0'} = K \dot{v}_0 P_{I_0}$ is closed in
$G$ where $v_0'=v_0*w_{I_0}$.
\end{lemma}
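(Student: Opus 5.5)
The plan is to show that for smooth $X_{v_0}$ the orbit closure is the full preimage of a single closed $K$-orbit in a partial flag variety. The rest of the lemma then follows from properness of $G/B \to G/P$ and right $P_{\tau(v_0)}$-invariance of $G_{v_0}$.

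\textbf{Step 1: the parabolic $P_J$.} By Section~\ref{ss:smooth}, write $v_0 = (v^1 \mid \cdots \mid v^r)$ with each $v^j = \max(V_{p_j,q_j})$. Let $J \subseteq S$ be the set of simple reflections $s_i$ that do not straddle two blocks; thus $P_J$ is the standard parabolic whose Levi factor is $GL(n_1)\times\cdots\times GL(n_r)$, with $n_j = p_j+q_j$. By \cite[Proposition 4.3.1]{Lar} (as used in Lemma~\ref{l:concatenation}),
$$X_{v_0} \cong K\times^{P_K}(X_{v^1}\times\cdots\times X_{v^r}), \qquad P_K = P_J\cap K.$$
Each $v^j$ is the open orbit for $GL(p_j)\times GL(q_j)$, so each $X_{v^j}$ is the entire flag variety of $GL(n_j)$. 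Hence the product $X_{v^1}\times\cdots\times X_{v^r}$ is $P_J/B$, embedded at the base point $\dot v_0$.

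This gives $X_{v_0} = K\dot v_0 P_J/B$, so $G_{v_0} = K\dot v_0 P_J$. The step needing the most care is matching the representative $\dot v_0$ with the embedding in Larson's description. I would choose $\dot v_0$ block-diagonal, built from the standard representatives of the block clans, so that it lies in the Levi factor of $P_J$ and the identification is literal.

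\textbf{Step 2: the first assertion.} Since $K\dot v_0B$ is the open $K\times B$-orbit in $K\dot v_0P_J$, every $s\in J$ satisfies $v_0*s=v_0$, so $J\subseteq \tau(v_0)$. As noted in Section~\ref{ss:K-orb}, $G_{v_0}$ is right $P_{\tau(v_0)}$-invariant. Therefore
$$K\dot v_0 P_J \subseteq K\dot v_0 P_{\tau(v_0)} \subseteq G_{v_0}P_{\tau(v_0)} = G_{v_0} = K\dot v_0 P_J,$$
and all these sets are equal. (In fact $J=\tau(v_0)$ by the description of $S\setminus\tau(v_0)$ in Section~\ref{ss:smooth}, but the argument does not need this.)

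\textbf{Step 3: the second assertion.} Let $I_0 \supseteq \tau(v_0)$, and let $\pi\colon G/B\to G/P_{I_0}$ be the projection, which is proper. Since $K\dot v_0P_{\tau(v_0)} \subseteq K\dot v_0P_{I_0}$, we get
$$\pi(X_{v_0}) = K\dot v_0 P_{I_0}/P_{I_0}.$$
This is a single $K$-orbit, and it is closed because $X_{v_0}$ is closed and $\pi$ is proper. Its preimage in $G$ is $K\dot v_0P_{I_0}$, which is therefore closed. It is also irreducible, being the image of $K\times P_{I_0}$. By Section~\ref{ss:K-orb}, the open $K$-orbit in $K\dot v_0P_{I_0}/B$ is the one indexed by $v_0' = v_0*w_{I_0}$. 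Hence $K\dot v_0P_{I_0} = \overline{K\dot v_0'B} = G_{v_0'}$, which is closed in $G$.
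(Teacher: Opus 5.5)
Your argument is correct in substance, but it takes a different route from the paper. The paper never identifies $X_{v_0}$ with the $K$-saturation of a fiber of $G/B\to G/P_J$. Instead it proves that $K\dot v_0P_{\tau(v_0)}/P_{\tau(v_0)}$ is closed in $G/P_{\tau(v_0)}$ by clan combinatorics. If $K\dot yP_{\tau(v_0)}/P_{\tau(v_0)}$ lies in its closure, then $y*w_{\tau(v_0)}\le v_0$. Since $w_{\tau(v_0)}$ is the longest element of $S_{n_1}\times\cdots\times S_{n_r}$, this forces $y*w_{\tau(v_0)}=v_0$, so the orbit is alone in its closure. Density of $K\dot v_0B$ in $K\dot v_0P_{\tau(v_0)}$ then gives the first assertion. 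The second is your Step 3, except that the paper leaves the identification with $G_{v_0'}$ implicit and you supply it.

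You replace the combinatorial closedness check by Larson's bundle description plus right $P_{\tau(v_0)}$-invariance. What this buys is independence from the equality $J=\tau(v_0)$, which the paper's proof takes for granted. That equality holds only for the coarsest block decomposition: for $(+|+|-)$ one has $J=\emptyset$ but $\tau(v_0)=\{s_1\}$, whereas $(++|-)$ gives $J=\{s_1\}$. So your parenthetical ``in fact $J=\tau(v_0)$'' should be qualified. The cost is treating Larson's Prop.~4.3.1 as a black box, including its base point. The paper's step ``this implies $y*w_{\tau(v_0)}=v_0$'' also tacitly uses Lemma~\ref{l:concatenation}, which rests on the same proposition.

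One remark in Step 1 is wrong, although the argument does not depend on it: you cannot in general choose $\dot v_0$ in the Levi factor $L_J$ of $P_J$. If you could, $K\dot v_0P_J=KP_J$ would not depend on how the signs are distributed among the blocks. For instance, with $n=2$ and $p=q=1$, the clans $(+|-)$ and $(-|+)$ both have $P_J=B$. Yet they are distinct closed orbits, and the second has no representative in $T=L_J$. The reason is that the $j$-th diagonal block of $K\cap L_J$ is $GL(a_j)\times GL(b_j)$, where $a_j$ counts the positions of the $j$-th block lying in $[p]$, and in general $a_j\neq p_j$. So Larson's product sits over a point $gP_J$ with $g$ a suitable permutation, not over $eP_J$.

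No special choice of representative is needed. Larson's description gives $X_{v_0}=KgP_J/B$ for some $g$, and any representative satisfies $\dot v_0B/B\in X_{v_0}$. Hence $\dot v_0\in KgP_J$ and $K\dot v_0P_J=KgP_J$, which is all Step 1 requires.
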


\begin{proof}
We claim that $K\dot{v}_0P_{\tau(v_0)}/P_{\tau(v_0)}$ is closed in $G/P_{\tau(v_0)}$.  Indeed, if $K \dot{y} P_{\tau(v_0)}/P_{\tau(v_0)} \subseteq \overline{K\dot{v}_0P_{\tau(v_0)}}/P_{\tau(v_0)}$,
then $y * w_{\tau(v_0)} \leq v_0 * w_{\tau(v_0)} = v_0$.  Because $X_{v_0}$ is smooth, $v_0 = (v^1, \ldots, v^r)$ where $v^i = \max(V_{p_i, q_i})$.  Let $n_i = p_i+q_i$.
The element
 $w_{\tau(v_0)}$ is the longest
element in the subgroup $S_{n_1} \times \cdots \times S_{n_r}$ of $W$.  This implies that $y * w_{\tau(v_0)} = v_0$.  Hence $K \dot{y} P_{\tau(v_0)}/P_{\tau(v_0)} = K \dot{v}_0P_{\tau(v_0)}/P_{\tau(v_0)}$.
Thus, $K\dot{v}_0P_{\tau(v_0)}/P_{\tau(v_0)}$ is the only $K$-orbit in its closure, which implies the claim.

Since $K \dot{v}_0 B$ is the open $K \times B$ orbit in $K \dot{v}_0 P_{\tau(v_0)}$, we have $\overline{K \dot{v}_0 B} = \overline{K \dot{v}_0 P_{\tau(v_0)}}$.
The previous paragraph implies that this equals $K \dot{v}_0 P_{\tau(v_0)}$.  This proves the first assertion of the lemma. 

If $I_0 \supseteq \tau(v_0)$, there is a proper map $G/P_{\tau(v_0)} \to G/P_{I_0}$.  The image under this map of $K \dot{v}_0P_{\tau(v_0)}/P_{\tau(v_0)}$
is $K \dot{v}_0 P_{I_0} /P_{I_0}$, which is closed (since it is the image of a closed set).  Hence $K \dot{v}_0 P_{I_0} $ is closed in $G$.
\end{proof}

The next proposition plays an important role in the proof of Theorem \ref{t:fiberhomog}.

\begin{proposition} \label{p:orbitnumber}
 Let $G = GL(n)$ and $K = GL(p) \times GL(q)$.  
Suppose that $I \subset S$ is a set of commuting simple reflections. 
Suppose $n = n_1 + \cdots n_h$ and $n_i = p_i + q_i$ for
$i \in \{1, \ldots, h \}$.  Let 
$v = (v^1 \mid v^2 \mid \cdots \mid v^h) \in V_{p,q}$,
where $v^i \in V_{p_i, q_i}$.   Suppose that 
(a) $\ell(v*w_I)=\ell(v)+ | I |$,  and (b) each $s \in I$ straddles two blocks of $v$ (this follows from (a)
if each $v^i = \max(V_{p_i, q_i})$).  
Then the natural map $\rho: K \backslash \overline{KvB}/B \rightarrow K \backslash \overline{KvP_I}/P_I$
is a bijection.
\end{proposition}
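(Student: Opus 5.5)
Since $\overline{KvB}$ is contained in $\overline{KvP_I}$, every $K$-orbit in $\overline{KvB}/B$ maps to a $K$-orbit in $\overline{KvP_I}/P_I$. The orbits of $K$ on $\overline{KvB}/B$ are the $K\dot yB/B$ with $y\le v$. The orbits of $K$ on $G/P_I$ contained in $\overline{KvP_I}/P_I$ are the $K\dot xP_I/P_I$ with $x\le v*w_I$. By \S\ref{ss:K-orb}, $K\dot yP_I=K\dot y'P_I$ if and only if $y*w_I=y'*w_I$. The plan is to prove surjectivity and injectivity of $\rho$ separately, working block by block via Lemma \ref{l:concatenation}.

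\textbf{Surjectivity.} Every $K$-orbit in $\overline{KvP_I}/P_I$ meets the image of $\overline{KvB}/B$, because $\overline{KvB}P_I$ is closed (it is the image of a proper map) and contains the dense set $KvP_I$. This step is routine.

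\textbf{Injectivity.} Suppose $y,y'\le v$ and $y*w_I=y'*w_I$; we must show $y=y'$. By Lemma \ref{l:concatenation}, $y=(y^1\mid\cdots\mid y^h)$ and $y'=(y'^1\mid\cdots\mid y'^h)$ with $y^i,y'^i\in V_{p_i,q_i}$. Since the elements of $I$ commute, $w_I=\prod_{s\in I}s$, and $y*w_I$ is obtained by applying the $s\in I$ successively. Each such $s=s_j$ touches only positions $j$ and $j+1$, which by (b) are the last entry of one block and the first entry of the next. Because the reflections commute, distinct elements of $I$ touch disjoint pairs of positions. The key claim is that the block decomposition, together with the rule recorded in \S\ref{ss:K-orbit-particular} for $y*s_j$, lets us recover $y$ from $y*w_I$ and the block sizes.

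To prove the claim, examine each $s_j\in I$ acting on the boundary pair $(a_j,a_{j+1})$ of $y$.
\begin{enumerate}
\item If $s_j\in\tau(y)$, nothing changes.
\item In the complex ascent case, the entries are swapped. Afterwards the entry at position $j$ is mated with something in the right block (or is a sign that came from the right block), and symmetrically for position $j+1$.
\item In the noncompact imaginary case, two opposite signs become a pair of equal numbers mated across the boundary.
\end{enumerate}
Since the blocks of $y$ have fixed sizes $(p_i,q_i)$, counting signs and mates in each block of $y*w_I$ should determine which case occurred at each boundary, and hence allow undoing it. For instance, a mate pair straddling the boundary at positions $j,j+1$ cannot occur in $y$ itself, since $y$ is a concatenation; so it must have come from case 3. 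An entry at position $j$ whose mate lies in the next block must have come from a swap.

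\textbf{Main obstacle.} The hard point is the ambiguity between case 1 and case 2. A swap could produce a configuration that also arises as an unchanged $y'$ in which $s_j\in\tau(y')$. For example, two numbers whose mates lie on opposite sides of the boundary can either be an original configuration (a descent) or the result of swapping. This is where hypothesis (a), $\ell(v*w_I)=\ell(v)+|I|$, must enter. I expect to use (a) to show that, for any $y\le v$, no $s\in I$ can lie in $\tau(y)$ in a way that makes mates cross a boundary. Concretely, a descent at a straddling position requires a mate pair crossing the boundary, which is impossible for a concatenation $y$. So for a concatenation $y$, $s_j\in\tau(y)$ happens only when the boundary entries are equal signs (compact imaginary). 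Whether the step was a swap or not can then be read off from whether mates cross the boundary in $y*w_I$, and if they do not, the block contents determine the entries.

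Making this case analysis airtight, including showing that the sign counts per block force the inverse, is where I expect most of the work. If it gets messy, the fallback is a dimension count: show that each fiber of $\rho$ is a single orbit by comparing $\ell(y)$ with $\dim K\dot yP_I/P_I$, using (a) to force each step to be a strict ascent.
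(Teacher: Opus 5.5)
Your plan is essentially the paper's proof. Surjectivity comes from properness of $G/B\to G/P_I$. For injectivity, you use Lemma \ref{l:concatenation} to write $y\le v$ as a concatenation, observe that a concatenation has no descent at a straddling position (so the only $\tau$-case is a pair of equal signs), and read off $(y_m,y_{m+1})$ from $(w_m,w_{m+1})$, with block sign counts settling the opposite-signs case.

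Three small remarks. Hypothesis (a) is never actually needed; the paper's argument does not use it either. The dimension-count fallback is unnecessary. When a block's first and last entries both come from opposite-sign pairs, recover the signs by sweeping the blocks left to right, since the leftmost block in such a chain has only one unknown sign.
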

\begin{proof}
Since $\pi_v:  \overline{KvB}/B \rightarrow   \overline{KvP_I}/P_I =  \overline{Kv * w_I P_I}/P_I $ is surjective, the map $\rho$
is surjective.  Hence, given any $w \in V_I$ with
$w \leq v * w_I$, there exists $x \in V$, $x \leq v$, such that $\pi_v(K x B/B) \subseteq K w P_I/P_I$.  
We must show that this $x$ is unique.  Observe that  $K x B/B \subseteq \pi_v^{-1}(K w P_I/P_I)$
is equivalent to the conditions $x \in \Gamma(w)$ and $x \leq v$.
It suffices to show that
the entries of $x$ are determined by these conditions.  

Because $x \leq v$, we can write
$x=(x^1 \mid \cdots \mid x^h)$.
The entries of $x$ coincide with those of
$w = x*w_I$ except in positions $m, m+1$, for $s_m \in I$. 
Suppose that $s_m \in I$.  Since $I$ consists of commuting simple reflections,
any other element $s_{m'}$ of $I$ satisfies $|m - m'| \geq 2$.  This implies that the entries $(w_m, w_{m+1})$ are determined by the entries
$(x_m, x_{m+1})$.  The description of the monoid action
implies that one of the following holds, where $\epsilon$ denotes a sign $+$ or $-$, and $a, b$ denote integers.
\begin{enumerate}
\item $(x_m, x_{m+1})$ and $(w_m, w_{m+1})$ are pairs of equal signs, and   $(x_m, x_{m+1}) =(w_m, w_{m+1})$. 
\item $(x_m, x_{m+1})$ are opposite signs, and $(w_m, w_{m+1})$ are equal integers.
\item $(x_m, x_{m+1}) = (a, \epsilon)$, where the mate of $a$ occurs before position $m$, and
$(w_m, w_{m+1}) = (\epsilon, a)$.
\item $(x_m, x_{m+1}) = (\epsilon, a)$, where the mate of $a$ occurs after position $m+1$, and $(w_m, w_{m+1}) = (a, \epsilon)$, 
\item $(x_m, x_{m+1}) = (a, b)$, where the mate of $a$ occurs before position $m$, the mate of $b$ occurs after position $m+1$,
and $(w_m, w_{m+1}) = (b,a)$.
\end{enumerate}
 From this
list, we can see that $(w_m, w_{m+1})$ uniquely determines $(x_m, x_{m+1})$.  Indeed, the only possible ambiguity would
be if $(w_m, w_{m+1})$ are equal integers; then we are in case (2), so $(x_m, x_m+1)$ is either
$(+,-)$ or $(-,+)$.  The number of $+$ signs minus the number of $-$ signs in the $k$-th block of $x$
is $p_k - q_k$.  Therefore, whether $x_m$ is $+$ or $-$ is determined by the other entries
of $x$ in the same block, and similarly for $x_{m+1}$.  Hence there is only one possibility for $(x_m, x_{m+1})$.
We conclude that $x$ is the unique element in $V$ with $x \in \Gamma(w)$ and $x \leq v$.  
\end{proof}

\begin{remark} \label{r:birational}
Let $v \in V$ and $I \subseteq S$.  The restriction of $\pi: G/B \to G/P_I$ yields a proper surjective morphism $\pi_v: X_v = \overline{K\dot{v}B}/B \to \overline{K\dot{v}P_I}/P_I$.
If $\ell(v * w_I) = \ell(v) + \ell(w_I)$, then the map $\pi_v$ is birational (since $G = GL(n)$ and $K = GL(p) \times GL(q)$; for
general $G$ and $K$, $\pi_v$ is generically finite).
Indeed, the hypothesis $\ell(v * w_I) = \ell(v) + \ell(w_I)$ is equivalent to the statement that $\dim X_{v*w_I} = \dim X_v + \dim P/B$.  The map
$$
\overline{K \dot{v} P_I}/B = \overline{K v* w_I B}/ B \to \overline{K \dot{v} P_I}/P_I
$$
is a fiber bundle with fibers isomorphic to $P_I/B$, so $\dim X_{v*w_I} = \dim  \overline{K \dot{v} P_I}/P_I + \dim G/P$.  
Hence $\dim X_v = \dim  \overline{K \dot{v} P_I}/P_I$, so $\pi_v$ is generically finite.  In particular it is finite over the open orbit in $\overline{K v P_I}/P_I$.
Since the orbits are equivariantly simply connected, $\pi_v$ is an isomorphism over this orbit, so it is birational.   
The map
$\rho: K \backslash X_v \rightarrow K \backslash \overline{KvP_I}/P_I$ takes $K \dot{x} B/B$ to $\pi_v(K \dot{x} B/B) = K \dot{x} P_I/P_I$.
The conclusion of Proposition \ref{p:orbitnumber} that $\rho$ is a bijection means that  $\pi_v^{-1} (K \dot{x} P_I/P_I)$ consists
of a single $K$-orbit for $x \leq v$.
\end{remark}

\begin{example} \label{e:orbitnumber}
Let $G = GL(4) \supset K = GL(2) \times GL(2)$.  The closure order for $K$-orbits on $G/B$ for this case is depicted in \cite[Fig.~2]{Wys}.
Let $v^1 = v^2 = \max(V_{1,1})$, and let $v =(v^1 \mid v^2) = (1 1 | 2 2)$.
Let $I=\{s_2\}$.  Under the map $\pi: G/B \to G/P_I$, orbit closures $X_u$ and $X_{u'}$ have the same image if and only
if $u*s_2 = u'*s_2$.  Thus, the preimage under $\pi$ of a $K$-orbit consists of either $1$ or $2$ $K$-orbits.  The map $\pi_v$ takes $X_u$ (for $u \leq v$) to the orbit indexed
by $u*s_2$.  In particular, $X_v$ maps to the orbit indexed by $v*s_2 = (1 2 1 2)$.  
The inverse image $\pi_v^{-1}(G/P_I)$ is the union of the $K$-orbits contained in $X_v$.  From \cite[Fig.~2]{Wys}, we can verify that the
inverse image under $\pi_v$ of any $K$-orbit is a single $K$-orbit, confirming that $\rho$ is a bijection.  The picture below describes what the map $\pi_v$ does to orbits; the left diagram
records the clan $u = (u^1 \mid u^2)$ corresponding to the orbit closure $K u B \subset X_v$, and the right diagram records the element $u* s_2 \in V_I$
describing the orbit $\pi_v(KuB)$.

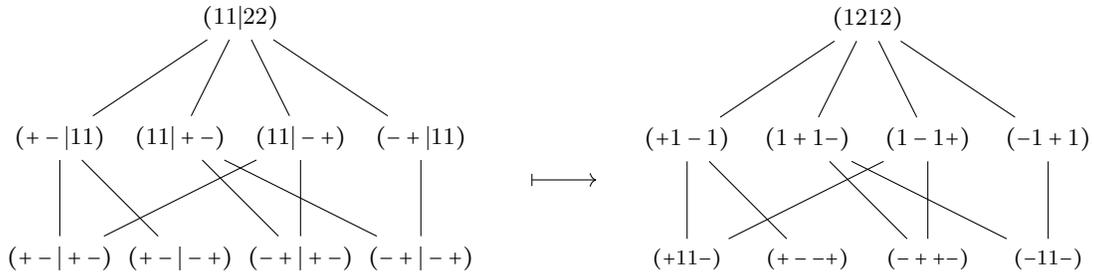
\begin{figure}[H]
\begin{tikzpicture}[scale=.4]

\node (d) at (-6,0) {\footnotesize{$(+-|+-)$}};
\node (e) at (-2,0) {\footnotesize{$(+-|-+)$}};
\node (g) at (6,0) {\footnotesize{$(-+|-+)$}};
\node (h) at (-6,4) {\footnotesize{$(+-|11)$}};
\node (i) at (-2,4) {\footnotesize{$(11|+-)$}};
\node (j) at (2,4) {\footnotesize{$(11|-+)$}};
\node (k) at (6,4) {\footnotesize{$(-+|11)$}};
\node (v) at (0,8) {\footnotesize{$(11|22)$}};
\node (l) at (2,0) {\footnotesize{$(-+|+-)$}};
\draw  (l) -- (j);
\draw  (l) -- (i);

\draw  (d) -- (h);

\draw  (e) -- (h);
\draw  (d) -- (j);
\draw  (g) -- (i);
\draw  (g) -- (k);
\draw  (h) -- (v);

\draw  (i) -- (v);

\draw  (j) -- (v);

\draw  (k) -- (v);
\end{tikzpicture}
\begin{tikzpicture}[scale=.4] 
\draw [black, |->, 
    shorten >=-20pt
] (4,3) -- (4.4,3) ;
\node (d) at (4,0) {\quad\quad\quad};
\end{tikzpicture}
\quad\begin{tikzpicture}[scale=.4] 

\node (d) at (-6,0) {\scriptsize{$(+11-)$}};
\node (e) at (-2,0) {\scriptsize{$(+--+)$}};
\node (g) at (6,0) {\scriptsize{$(-11-)$}};
\node (h) at (-6,4) {\footnotesize{$(+1-1)$}};
\node (i) at (-2,4) {\footnotesize{$(1+1-)$}};
\node (j) at (2,4) {\footnotesize{$(1-1+)$}};
\node (k) at (6,4) {\footnotesize{$(-1+1)$}};
\node (v) at (0,8) {\footnotesize{$(1212)$}};
\node (l) at (2,0) {\scriptsize{$(-++-)$}};
\draw  (l) -- (j);
\draw  (l) -- (i);

\draw  (d) -- (h);

\draw  (e) -- (h);
\draw  (d) -- (j);
\draw  (g) -- (i);
\draw  (g) -- (k);
\draw  (h) -- (v);

\draw  (i) -- (v);

\draw  (j) -- (v);

\draw  (k) -- (v);
\end{tikzpicture}
\captionof{figure}{Images of clans $u\leq v_0$ under $\pi_v$}
\label{Fig1}
\end{figure}
\end{example}

\subsection{Some results on the $\tau$-invariant} \label{ss:tau}
For later use, we record the following lemmas.  We continue assuming $G = GL(n)$ and $K = GL(p) \times GL(q)$.

\begin{lemma} \label{l:tau1} Let $v = (a_1, \ldots, a_n) \in V_{p,q}$.  Suppose $s = s_{\epsilon_i - \epsilon_{i+1}}$ and $s' = s_{\epsilon_j - \epsilon_{j+1}}$ are unequal commuting 
simple reflections, neither of which is in $\tau(v)$.  Suppose in addition that 
if $(a_i, a_{i+1})$ and $(a_j, a_{j+1})$ are both ordered pairs of natural numbers, then
$(a_i, a_{i+1}) \neq (a_j, a_{j+1})$ as ordered pairs.
Then $s' \not\in \tau(v*s)$, and $v < v*s < v* s *s'$.  
\end{lemma}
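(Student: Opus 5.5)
The plan is a direct check on clan entries, using the description of the monoid action in Section \ref{ss:K-orbit-particular}. Since $s$ and $s'$ commute and are unequal, $|i-j|\geq 2$, so the position pairs $\{i,i+1\}$ and $\{j,j+1\}$ are disjoint. Since $s\notin\tau(v)$, we have $v<v*s$. Moreover $v*s$ agrees with $v$ outside positions $i,i+1$, and in those positions it either swaps $a_i,a_{i+1}$ (complex ascent) or replaces two opposite signs by a new pair of equal integers (noncompact imaginary). Thus it suffices to prove $s'\notin\tau(v*s)$. Then $v*s<v*s*s'$ follows from the same dichotomy applied to $v*s$.

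To show $s'\notin\tau(v*s)$, I would go through the cases (1a), (1b), (1c), (2) describing why $s'\notin\tau(v)$. In each case I would check that the same condition still holds for $v*s=(a'_1,\dots,a'_n)$. The entries in positions $j,j+1$ are unchanged, so $a'_j=a_j$ and $a'_{j+1}=a_{j+1}$. Only the relative positions of their mates can change.

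\begin{itemize}
\item Case (2): $a_j,a_{j+1}$ are opposite signs. This survives trivially.
\item Cases (1b) and (1c): the condition is that the mate of a single integer lies after position $j+1$ (resp.\ before position $j$). If $s$ acts by creating a new integer pair, then no existing mate moves. If $s$ swaps $a_i$ and $a_{i+1}$, a mate can move only between positions $i$ and $i+1$. Since $\{i,i+1\}$ lies entirely on one side of $\{j,j+1\}$, being before $j$ or after $j+1$ is preserved.
\item Case (1a): $a_j,a_{j+1}$ are distinct integers and the mate of $a_j$ occurs before the mate of $a_{j+1}$. Again the only way to change the order of the mates is for the swap at $i,i+1$ to interchange them. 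This happens exactly when $\{a_i,a_{i+1}\}=\{a_j,a_{j+1}\}$ as sets. Because $s\notin\tau(v)$ with $(a_i,a_{i+1})$ integers, we are in case (1a) for $s$ too: the mate of $a_i$ precedes the mate of $a_{i+1}$.
\end{itemize}

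The main obstacle is excluding this last coincidence, and it is precisely where the ordered-pair hypothesis enters. Suppose $(a_i,a_{i+1})=(a_{j+1},a_j)$. Then $a_i$ is the mate of $a_{j+1}$ and $a_{i+1}$ is the mate of $a_j$. Condition (1a) at $i$ says the mate of $a_i$, namely position $j+1$, precedes the mate of $a_{i+1}$, namely position $j$. This is a contradiction. Hence the only coincidence compatible with both ascent conditions is $(a_i,a_{i+1})=(a_j,a_{j+1})$, which the hypothesis forbids. So in case (1a) the mates of $a_j,a_{j+1}$ stay in the same relative order, and $s'$ remains a complex ascent for $v*s$.

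Combining the cases gives $s'\notin\tau(v*s)$, and therefore $v<v*s<v*s*s'$.
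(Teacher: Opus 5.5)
Your proposal is correct and follows the paper's approach. The paper's proof says only that the lemma follows by checking the possibilities for $(a_i,a_{i+1})$ and $(a_j,a_{j+1})$ against the explicit clan description of the monoid action. You carry out that check in full, and you correctly identify the one dangerous configuration: a swap at $i,i+1$ that moves both mates of $a_j$ and $a_{j+1}$. The ordered-pair hypothesis excludes it, since the reversed pair is already incompatible with the ascent conditions.
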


\begin{proof} The monoid action, in terms of clans, is described explicitly in Section \ref{ss:K-orbit-particular}.  The lemma follows by
examining what happens for each of the possibilities for $(a_i, a_{i+1})$ and $(a_j, a_{j+1})$ described there.
\end{proof}

Assuming the other hypotheses of the lemma, the condition that $(a_i, a_{i+1})$ and $(a_j, a_{j+1})$ are distinct ordered pairs of natural numbers is automatically
satisfied if $v$ is a concatenation of smaller clans, and $s$ and $s'$ both straddle blocks of $v$.

\begin{example}
Even if the other hypotheses of the lemma hold,  the conclusion of the lemma can fail
 if $(a_i, a_{i+1}) = (a_j, a_{j+1})$ are 
equal ordered pairs of natural numbers 
 (e.g.~for $v = (1212),  I = \{ s, s' \}$ where $s = s_{\epsilon_1 - \epsilon_2}$ and $s' = s_{\epsilon_3 - \epsilon_4}$.
In this example, the inequality $\ell(v*w_I) \leq \ell(v) + |I|$ is strict.  The next
lemma gives some conditions which imply equality.
\end{example}

\begin{lemma} \label{l:tau2} Let $v \in V$ be a concatenation of clans.  Suppose $I = \{ s_{i_1}, \ldots, s_{i_k} \}$ is a set
of commuting simple reflections, each of which straddles two blocks of $v$.  The following are equivalent:
\begin{enumerate}
\item $I \cap \tau(v) = \emptyset$.
\item $v < v*s_{i_1} < \cdots < v*s_{i_1}* \cdots * v* s_{i_k} = v* w_I$.
\item $\ell(v* w_I) = \ell(v) + |I|$.
\end{enumerate}
\end{lemma}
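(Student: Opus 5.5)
The plan is to prove the cycle of implications (1) $\Rightarrow$ (2) $\Rightarrow$ (3) $\Rightarrow$ (1). Two facts about the monoid action will be used throughout. First, for any $u \in V$ and $s \in S$ we have $u*s \geq u$, and $\ell(u*s) = \ell(u)+1$ when $s \notin \tau(u)$. This holds for our $G$ and $K$, since complex ascents and noncompact imaginary roots (all of type I) raise the dimension by exactly one. Second, since $I$ consists of commuting simple reflections, $w_I = s_{i_1} \cdots s_{i_k}$ is a reduced product, so $v*w_I = v*s_{i_1}*\cdots*s_{i_k}$.

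For (1) $\Rightarrow$ (2), induct on $k$ using Lemma \ref{l:tau1}. Put $v_j = v*s_{i_1}*\cdots*s_{i_j}$. We want to show by induction that $s_{i_{j+1}}, \ldots, s_{i_k} \notin \tau(v_j)$.

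1. Each $s_{i_m}$ straddles two blocks of $v$. By the remark after Lemma \ref{l:tau1}, the pairs $(a_{i_m}, a_{i_m+1})$ for distinct $m$ are then never equal ordered pairs of natural numbers.
2. Applying Lemma \ref{l:tau1} with $s = s_{i_1}$ and $s' = s_{i_m}$ gives $s_{i_m} \notin \tau(v*s_{i_1})$ and $v < v_1$.
3. To continue the induction, we must check that the hypothesis of Lemma \ref{l:tau1} persists for $v_1$. Passing from $v$ to $v_1$ changes only positions $i_1, i_1+1$. These are disjoint from, and non-adjacent to, the positions $i_m, i_m+1$, because the reflections commute.
4. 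The pair $(a_{i_1}, a_{i_1+1})$ either becomes a swapped pair or a pair of equal integers. In the equal-integers case it cannot coincide with any other pair $(a_{i_m}, a_{i_m+1})$, since the mates of those entries lie elsewhere.
5. For the other pairs, equality as ordered pairs would require each entry's mate to sit in the corresponding other position. That mate configuration is not altered by the step.

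Given this, iterating yields the strict chain in (2).

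For (2) $\Rightarrow$ (3), each strict step $v_{j-1} < v_{j-1}*s_{i_j}$ means $s_{i_j} \notin \tau(v_{j-1})$, so the length increases by exactly one. Summing over the $k$ steps gives $\ell(v*w_I) = \ell(v) + k = \ell(v) + |I|$.

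For (3) $\Rightarrow$ (1), argue by contrapositive. Suppose some $s_{i_m} \in \tau(v)$, and reorder $I$ so that $m = 1$. Then $v*s_{i_1} = v$, and each of the remaining $k-1$ steps raises the length by at most one. Hence $\ell(v*w_I) \leq \ell(v) + k - 1 < \ell(v)+|I|$, contradicting (3).

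I expect the main obstacle to be step 3 of (1) $\Rightarrow$ (2): verifying that the distinct-ordered-pair condition and the "straddling" structure survive after applying the earlier reflections. I would handle it by a short case check against the explicit list of monoid-action cases in Section \ref{ss:K-orbit-particular}, in the same way as in the proof of Proposition \ref{p:orbitnumber}.
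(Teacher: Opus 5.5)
Your proposal is correct and follows the paper's proof: (1)$\Rightarrow$(2) by repeated application of Lemma \ref{l:tau1}, (2)$\Rightarrow$(3) because each strict step raises the length by exactly one, and (3)$\Rightarrow$(1) by contrapositive, moving the element of $I \cap \tau(v)$ to the front so that it acts trivially. Your steps 3--5 spell out what the paper compresses into ``applying Lemma \ref{l:tau1} repeatedly'': the distinct-ordered-pair hypothesis of that lemma survives each step, because only positions $i_1, i_1+1$ change and the mate relations among the remaining positions are untouched.
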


\begin{proof}
$(1) \Rightarrow (2)$: Suppose $I \cap \tau(v) = \emptyset$.  Applying Lemma \ref{l:tau1} repeatedly yields (2).

$(2) \Rightarrow (3)$: If $v < v*s$, then $\ell(v*s) = \ell(v) + 1$.  Hence the chain of inequalities in (2) implies (3).

$(3) \Rightarrow (1)$: We prove the contrapositive.  Suppose $I \cap \tau(v) \neq \emptyset$, so  $s_{i_r} \in \tau(v)$ for some $r$.
Let $I' = I \smallsetminus \{ s_{i_r} \}$.  Then $v* w_I = v*w_{I'}$, so $\ell(v* w_I) = \ell(v*w_{I'}) \leq \ell(v) + |I| - 1$, so $(3)$ does not hold.
\end{proof}

Note that $(1) \Leftarrow (2) \Leftrightarrow (3)$ for general $G$ and $K$.  Only the implication $(1) \Rightarrow (2)$ uses the hypothesis that
$G = GL(n)$ and $K = GL(p) \times GL(q)$.  We do not know if this hypothesis is necessary.

\begin{lemma} \label{l:compact-im}
Suppose $X_{v_0}$ is smooth, and let $y \leq v_0$.   If $s$ is a simple reflection such that $s \in \tau(y)$ and $s \not\in \tau(v_0)$, then
$s$ is $y$-compact imaginary.
\end{lemma}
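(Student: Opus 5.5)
The plan is to reduce everything to the explicit clan description of the monoid action in Section \ref{ss:K-orbit-particular}, using the block structure that smoothness of $X_{v_0}$ forces. Since $X_{v_0}$ is smooth, Section \ref{ss:smooth} gives $v_0 = (v^1 \mid \cdots \mid v^r)$ with each $v^k = \max(V_{p_k,q_k})$. Inside a block $\max(V_{p_k,q_k})$ every adjacent pair of entries gives a simple reflection lying in $\tau(v_0)$. Indeed, such a pair is either two equal signs (compact imaginary), or a number followed by a number or sign whose mates sit in the order making it a complex descent, or symmetric to this. Hence the hypothesis $s = s_i \notin \tau(v_0)$ forces $s_i$ to straddle two blocks: $i$ is the last position of some block $v^k$ and $i+1$ is the first position of $v^{k+1}$. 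I would verify this descent claim briefly by inspecting the shape $(1\,2\cdots q\,+\cdots+\,q\cdots 2\,1)$.

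Next, apply Lemma \ref{l:concatenation} to $y \le v_0$. This gives $y = (y^1 \mid \cdots \mid y^r)$ with $y^k \in V_{p_k,q_k}$, with the same block sizes. The key consequence is that the mate of any integer entry of $y$ lies in the same block. Write $y = (a_1,\ldots,a_n)$. If $a_i$ is an integer, its mate occurs before position $i$, because $i$ is the last position of its block. If $a_{i+1}$ is an integer, its mate occurs after position $i+1$, because $i+1$ is the first position of its block.

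Now run through the possibilities for $(a_i,a_{i+1})$ against the list of cases in which $s_i \notin \tau(y)$.
\begin{enumerate}
\item Both entries are integers. The mate of $a_i$ lies before $i$ and the mate of $a_{i+1}$ lies after $i+1$, so the mate of $a_i$ occurs before the mate of $a_{i+1}$. This is case (1)(a), a complex ascent.
\item A sign followed by an integer. The integer's mate lies after $i+1$, which is case (1)(b).
\item An integer followed by a sign. The integer's mate lies before $i$, which is case (1)(c).
\item Opposite signs. This is case (2), noncompact imaginary.
\end{enumerate}
In each of these situations $s_i \notin \tau(y)$, contradicting $s \in \tau(y)$. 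The only remaining possibility is that $a_i$ and $a_{i+1}$ are equal signs. Then $s_i$ is imaginary, because $\phi(y)$ fixes $i$ and $i+1$. As noted in Section \ref{ss:K-orbit-particular}, the equal-signs case is exactly the compact imaginary case, so $s$ is $y$-compact imaginary.

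The only step needing any care is the first: confirming that in a smooth $v_0$ a simple reflection internal to a block always lies in $\tau(v_0)$. Only then does $s \notin \tau(v_0)$ really force straddling. This is a finite check against the clan rules, and it is essentially the characterization of $S \setminus \tau(v_0)$ already stated in Section \ref{ss:smooth}, which I would simply cite.
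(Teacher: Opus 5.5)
Your proposal is correct and follows essentially the same route as the paper. Both arguments use the block decomposition of $v_0$, both get straddling from $s\notin\tau(v_0)$, and both use Lemma \ref{l:concatenation} to keep mates within blocks. Both then apply the clan rules to leave equal signs as the only possibility. Your explicit four-case enumeration simply spells out what the paper compresses into ``in either case, $y*s>y$.''
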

\begin{proof}
Since $X_{v_0}$ is smooth, $v_0=(v^1 \mid \cdots \mid v^h)$ for some $h$, where $v^i = \max(V_{p_i, q_i})$. 
By Lemma \ref{l:concatenation},
$y = (y^1 \mid \cdots \mid y^h)$, where $y_i \in V_{p_i, q_i}$.  

Now suppose $s \in \tau(y)$ and $s \not\in \tau(v_0)$.  Because $s \not\in \tau(v_0)$, $s$ must straddle the $k$ and $k+1$ blocks of $v_0$.  If $y^k$ ends with a number, then the mate of this number occurs in $y^k$ (as $y^k \in V_{p_k, q_k}$) -- in particular, it occurs prior to the last entry of $y^k$.  Similarly, if $y^{k+1}$ begins with a number, then the mate of this number occurs in $y^{k+1}$, so it occurs after the first entry of $y^{k+1}$.  In either case, $y * s > y$, so $s \not\in \tau(y)$, contradicting our assumption.  Hence $y^k$ ends with a sign and $y^{k+1}$ begins with a sign.  Our discussion of the monoid action implies that the signs must be the same, so $s$ is $y$-compact imaginary.
\end{proof}

\subsection{$T$-fixed points of smooth orbit closures} \label{ss:T-fixed}
In this section we describe the $T$-fixed points of smooth $K$-orbit closures in $G/B$, for
$G = GL(n) \supset K = GL(p) \times GL(q)$.  We will discuss $T$-fixed points further when we discuss resolutions.

\begin{lemma}\label{lem6.3}
All $T$-fixed points in $X = G/B$ are contained in closed $K$-orbits.
\end{lemma}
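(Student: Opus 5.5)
The plan is to identify $X^T$ explicitly and then check, for each fixed point, which clan labels its $K$-orbit. Since $T=H$ consists of the diagonal matrices, $X^T = X^H = \{\dot w B/B : w \in W\}$. So it suffices to show that for every permutation $w$, the orbit $K\dot w B/B$ is closed, i.e.\ corresponds to a clan all of whose entries are signs.

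Concretely, identify $X$ with the variety of complete flags $F_\bullet$ in $\C^n = E_+ \oplus E_-$, where $E_+ = \mathrm{span}(e_1,\ldots,e_p)$ and $E_- = \mathrm{span}(e_{p+1},\ldots,e_n)$ are the eigenspaces of $I_{p,q}$. The point $\dot w B/B$ is the coordinate flag with $F_k = \mathrm{span}(e_{w(1)},\ldots,e_{w(k)})$. Each $F_k$ is spanned by standard basis vectors, so it is compatible with the splitting: $F_k = (F_k \cap E_+) \oplus (F_k \cap E_-)$.

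I would then invoke the standard description of the closed orbits, which the paper states corresponds to clans with all entries signs. Under the usual clan parametrization (as in \cite{Wys} or \cite{Yam}), a flag lies in the orbit of an all-sign clan exactly when every $F_k$ splits as above. In that case the clan entry is $a_k = +$ if $\dim(F_k\cap E_+)$ jumps at step $k$, and $a_k = -$ otherwise.

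For $\dot w B/B$ this gives $a_k = +$ if $w(k)\le p$ and $a_k = -$ if $w(k) > p$. The resulting clan has $p$ plus signs and $q$ minus signs, so it is an all-sign clan and the orbit is closed.

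If one prefers to avoid quoting the flag-theoretic parametrization, an alternative is the following. The orbit $K\dot w B/B$ is isomorphic to $K/(K\cap \dot w B \dot w^{-1})$. Since $\dot w B\dot w^{-1}$ is a Borel subgroup containing $T$ and normalized by $\theta$ (as $\theta$ fixes $T$ pointwise and hence fixes every root space), the intersection $K\cap \dot w B \dot w^{-1} = (\dot w B \dot w^{-1})^\theta$ is a Borel subgroup of $K$. This can be checked by its Lie algebra: it is $\ft$ plus the root spaces of $\dot w B\dot w^{-1}$ lying in $\fk$, which form a positive system for $K$. Then the orbit is $K/B_K$, which is projective and hence closed in $X$.

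The main (mild) obstacle is justifying that $\theta$-stability of $\dot w B\dot w^{-1}$ yields a Borel subgroup of $K$. I would use the second argument, since it is self-contained and needs only a dimension count: $\dim \fk\cap {}^w\fb = \dim\ft + |\Phi^+(K)|$.
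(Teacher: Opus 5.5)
Your proof is correct, and your preferred second argument is essentially the paper's. The paper also views $X^T$ as the set of Borel subgroups containing $T$ and observes that, since $\theta=\Ad(I_{p,q})$ fixes $T$ pointwise, each of them is $\theta$-stable. It then simply cites Mili\v{c}i\'c's Lemma~5.8: the union of the closed $K$-orbits is exactly the set of $\theta$-stable Borel subgroups.

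You instead prove by hand the one direction that is needed. Since $\fk\cap{}^w\fb$ is $\ft$ plus the root spaces of the positive system $\Phi_K\cap w\Phi^+$ of $K$, the stabilizer $K\cap\dot wB\dot w^{-1}$ contains a Borel subgroup of $K$. The orbit is then a quotient of $K/B_K$, so it is complete and therefore closed. This makes the argument self-contained, and you do not even need connectedness of the stabilizer.

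Your first argument, through the Yamamoto--Wyser flag description of orbits, is a genuinely different route. It is essentially the content of the paper's later Proposition~\ref{p:fixedpoint-closed}. It gives more, namely the specific closed orbit containing $\dot wB/B$ (with $a_k=+$ iff $w(k)\le p$), but at the cost of relying on the full clan parametrization.

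One remark applies to both your proof and the paper's. The step that $\theta$ preserves every root space uses $T=H$, and this is essential. For example, for $SL(3)\supset SO(3)$, the torus $T=(H\cap K)_0$ is regular, but only $2$ of the $6$ points of $X^T$ lie in the unique closed orbit. So the lemma does not hold for general $G$ and $K$, contrary to the remark following it in the paper.
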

\begin{proof}
Identify $X$ with the set of Borel subgroups of $G$.
The $T$-fixed points in $G/B$ are the Borel subgroups containing $T$.   For $G = GL(n)$ and $K = GL(p) \times GL(q)$, 
the involution is $\theta=\Ad(I_{p,q})$.  Note that  
$\theta(t)=t$ for all $t\in T$; thus, $\theta(\alpha)(t)=\alpha(t)$.  Hence $\theta$ acts trivially on the set of roots. So, any Borel subgroup $B_1$ 
containing $T$ is $\theta$-stable, i.e., $\theta(B_1)=B_1$.   
According to \cite[Lemma 5.8]{Milicic}, the union of closed orbits is precisely the set of $\theta$-stable Borel subgroups.  Hence any $B_1 \in X^T$
is contained in a closed orbit.
\end{proof}

Note that the statement of Lemma \ref{lem6.3} is valid for general $G$ and $K$.

\begin{example}
Let $v=(1212) \in V_{2,2}$. Then, as is apparent in \cite[Figure 2]{Wys}, each of the $6$ closed $K$-orbits in $X = G/B$ is contained in $X_v$.
Hence, $X_v^T = X^T$ is in bijection with the Weyl group $W$.  
\end{example}

Given a clan $u$ consisting only of signs, let $w_u$ denote the permutation such that in $1$-line notation, the entries $1, \ldots, p$ (resp.~ $p+1, \ldots, n$) appear in increasing order
in the positions where the entries of $u$ are $+$ (resp.~$-$).

\begin{proposition} \label{p:fixedpoint-closed}
If $X_u$ is a closed $K$-orbit in $G/B$, then
$$
 X_u^T = \{ w B/B \mid w \in W_K w_u \}.
$$ 
In other words, if $u = (a_1, \ldots, a_n)$, then 
$wB/B$ is in $X_u^T$ if and only 
$w_i \in [p]$ $\Leftrightarrow$ $a_i = +$, and $w_i \in [n] \smallsetminus [p]$
$\Leftrightarrow$ $a_i = -$.  
\end{proposition}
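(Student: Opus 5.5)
We identify $X$ with the variety of complete flags in $\C^n$. A $T$-fixed point $wB/B$ is the coordinate flag $F_w$ with $(F_w)_k = \mathrm{span}(e_{w(1)}, \ldots, e_{w(k)})$. The plan is to compute directly which closed $K$-orbit contains $F_w$, using the standard description of closed orbits by the relative position of a flag with respect to the decomposition $\C^n = E_+ \oplus E_-$. Here $E_+ = \mathrm{span}(e_1,\ldots,e_p)$ and $E_- = \mathrm{span}(e_{p+1},\ldots,e_n)$ are the $\pm1$ eigenspaces of $I_{p,q}$.

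First, recall the representative of the closed orbit $X_u$ for a sign clan $u$. I would take it to be $w_u B/B$, i.e.\ $K\dot u B = K w_u B$. This can be checked from the definition of $w_u$. The flag $F_{w_u}$ has $k$-th step spanned by $e_{w_u(1)},\ldots,e_{w_u(k)}$. The vector $e_{w_u(i)}$ lies in $E_+$ exactly when $a_i=+$. Hence $\dim((F_{w_u})_k\cap E_+) = \#\{i\le k : a_i=+\}$, and similarly for $E_-$.

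The standard parametrization (Matsuki--Oshima, Yamamoto) characterizes the orbit of a flag $F$ through the numbers $\dim(F_k\cap E_\pm)$ and related invariants. A flag lies in a closed orbit exactly when each $F_k$ splits as $(F_k\cap E_+)\oplus(F_k\cap E_-)$, i.e.\ $F$ is $\theta$-stable. In that case its sign clan $u$ is read off by setting $a_k=+$ if $\dim(F_k\cap E_+)$ jumps at step $k$, and $a_k=-$ otherwise. I would justify this step by noting two things. First, $K$ acts transitively on $\theta$-stable flags with a fixed jump pattern: a choice of adapted bases of $E_+$ and $E_-$ gives an element of $K$. Second, the data $\dim(F_k\cap E_\pm)$ are $K$-invariant, so these orbits are distinct. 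Applying this to $F_{w_u}$ identifies the clan correctly, consistent with the clan conventions in Section~\ref{ss:K-orbit-particular}. I expect this matching of conventions to be the main obstacle; I would check it against $v=\max(V_{p,q})$ in a small case such as $n=2$.

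Now take $w\in W$. The flag $F_w$ is $\theta$-stable, which is consistent with Lemma~\ref{lem6.3}. Its jump at step $i$ is in $E_+$ exactly when $w(i)\in[p]$. So $F_w\in X_u$ if and only if, for every $i$, $w(i)\in[p]\Leftrightarrow a_i=+$. This is the second formulation in the proposition.

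Finally, translate this into the coset statement. The condition says that $w$ and $w_u$ send the $+$ positions into $[p]$ and the $-$ positions into $[n]\smallsetminus[p]$. Equivalently, $w w_u^{-1}$ preserves both $[p]$ and $[n]\smallsetminus[p]$, i.e.\ $ww_u^{-1}\in S_p\times S_q = W_K$. Hence $w\in W_K w_u$. Conversely, for $x\in W_K$ the point $xw_uB/B = \dot x\cdot w_uB/B$ lies in $K w_u B/B$, since $\dot x$ can be chosen in $K$.
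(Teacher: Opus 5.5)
Your proof is correct and follows essentially the same route as the paper's. The paper cites Yamamoto's construction of orbit representatives, which is the same clan convention you identify as the key input: the closed orbit with sign clan $u$ consists of the flags whose jumps lie in $E_+$ exactly at the $+$ positions. From this it reads off that the coordinate flags in $X_u$ are those with $w_i\in[p]\Leftrightarrow a_i=+$, and it uses $W_K=S_p\times S_q$ for the coset form, as you do. Your transitivity argument for $K$ acting on $\theta$-stable flags with a fixed jump pattern makes explicit what the citation supplies; the remark after the proposition gives the equivalent viewpoint $X_u\cong K/B_K$.
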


\begin{proof}
The procedure given in \cite{Yam} (described also in \cite{Wys}) for constructing an element of $X_u$ constructs exactly the elements
$wB$ described in the second statement of the proposition.  The first statement then follows from the fact that $W_K = S_p \times S_q \subset W$.
\end{proof}

\begin{remark}
If we start from the fact that $w_u B/B \in X_u$, then the equality $X_u^T = W_K \cdot w_uB/B$  follows because
the stabilizer
in $K$ of $w_u B$ is equal to $B_K = B \cap K$, so  $X_u \cong K/B_K$, the flag variety for $K$.
\end{remark}

\begin{example}
Let $u=(+ + - + - -+)\in V_{4,3}$. Then $w_u = w=(1253674)\in W$, and
$$
W_K w  = \{(i_1,\ldots,i_7)\:|\;\{i_1,i_2,i_4,i_7\}=\{1,2,3,4\},\{i_3,i_5,i_6\}=\{5,6,7\}\}.
$$
\end{example}

 Let $v_0 = (v^1 \mid \cdots \mid v^r)$ with $v^i = \max(V_{p_i, q_i})$.  
Given $w \in W$, define the $i$-th
 block of $w$ to be the sequence of entries of $w$ in the positions corresponding to the $i$-th block $v^i$ of $v_0$.
Let $W(v_0)$ denote the set of $w \in W$ such that for each $i$,
the $i$-th block of $w$ contains $p_i$ entries in $[p]$ and $q_i$ entries in $[n] \smallsetminus [p]$.
The set $W(v_0) \cap W^{\tau(v_0)}$ is a set of right $W_{\tau(v_0)}$-coset representatives in $W(v_0)$.

Since $W_{\tau(v_0)}$ consists of the elements of $W$ which preserve the blocks under right
multiplication,
$W(v_0) \cap W^{\tau(v_0)}$ consists of the elements in $W(v_0)$ such that the entries in each block
are in increasing order.  If $X_u$ is a closed orbit contained in $X_{v_0}$, then 
\begin{equation} \label{e:wv0}
W(v_0) = W_K w_u W_{\tau(v_0)}.
\end{equation}

\begin{proposition} \label{p:fixedpoint-smooth}
If $X_{v_0}$ is smooth, then 
\begin{equation}\label{eqn4.4}
X_{v_0}^T = \{ w B/B \mid w \in W(v_0) \}. 
\end{equation}
Hence $(K v_0 P_{\tau(v_0)}/P_{\tau(v_0)})^T$ is in bijection with $w P_{\tau(v_0)}/P_{\tau(v_0)}$ for $w \in W(v_0) \cap W^{\tau(v_0)}$.
\end{proposition}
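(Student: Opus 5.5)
By Lemma \ref{lem6.3}, every $T$-fixed point of $X_{v_0}$ lies in a closed $K$-orbit contained in $X_{v_0}$. So the plan is to identify which closed orbits $X_u$ are contained in $X_{v_0}$, and then to use Proposition \ref{p:fixedpoint-closed} to list their fixed points.

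First I will determine the closed orbits below $v_0$. By Lemma \ref{l:concatenation}, any $u \le v_0$ has the form $u = (u^1 \mid \cdots \mid u^r)$ with $u^i \in V_{p_i,q_i}$. If $u$ is closed, each $u^i$ is then a sequence of signs with $p_i$ pluses and $q_i$ minuses. Conversely, I need every such sign clan $u$ to satisfy $u \le v_0$. For this I will use Lemma \ref{l:tau-para}: it gives $G_{v_0} = K\dot v_0 P_{\tau(v_0)}$, so $X_{v_0}$ is stable under right multiplication by $P_{\tau(v_0)}$. Since $v_0$ is the concatenation of the maxima $\max(V_{p_i,q_i})$, $P_{\tau(v_0)}$ is the block-diagonal parabolic with blocks of sizes $n_i = p_i+q_i$.

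Second, I will show that $wB/B \in X_{v_0}$ for every $w \in W(v_0)$; this gives the inclusion ``$\supseteq$''. Fix one closed orbit $X_{u_0} \subseteq X_{v_0}$, which exists because $X_{v_0}$ is closed, $K$-stable and nonempty. Proposition \ref{p:fixedpoint-closed} gives $W_K w_{u_0} B/B \subseteq X_{v_0}$. Right $P_{\tau(v_0)}$-invariance then gives $W_K w_{u_0} W_{\tau(v_0)} B/B \subseteq X_{v_0}$. By \eqref{e:wv0}, this set is exactly $W(v_0)$. I could also verify \eqref{e:wv0} directly: $W_K$ permutes values within $[p]$ and within $[n]\smallsetminus[p]$, while $W_{\tau(v_0)}$ permutes positions within blocks, and together these act transitively on the permutations with the prescribed block counts.

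Third, I will prove the inclusion ``$\subseteq$''. If $wB/B \in X_{v_0}^T$, then by Lemma \ref{lem6.3} it lies in some closed orbit $X_u \subseteq X_{v_0}$. By the first step, $u$ has blockwise sign counts $(p_i,q_i)$. Proposition \ref{p:fixedpoint-closed} then says the entries of $w$ lie in $[p]$ exactly at the $+$ positions of $u$, so $w \in W(v_0)$. This proves \eqref{eqn4.4}.

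For the final statement, I will use that $K\dot v_0 P_{\tau(v_0)} = G_{v_0}$, so the map $X_{v_0} \to K v_0 P_{\tau(v_0)}/P_{\tau(v_0)}$ is a $T$-equivariant $P_{\tau(v_0)}/B$-bundle. The $T$-fixed points of $G/P_{\tau(v_0)}$ are the points $wP_{\tau(v_0)}$, and such a point lies in the image exactly when some (equivalently every) representative lies in $X_{v_0}$. Fixed points therefore correspond to $W(v_0)/W_{\tau(v_0)}$, and $W(v_0)\cap W^{\tau(v_0)}$ gives the coset representatives. I expect the only delicate step to be the right $P_{\tau(v_0)}$-invariance, and that is already supplied by Lemma \ref{l:tau-para}.
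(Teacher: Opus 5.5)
Your proposal is correct and follows essentially the paper's proof: it uses Lemma \ref{lem6.3} together with Lemma \ref{l:concatenation} and Proposition \ref{p:fixedpoint-closed} for the inclusion $\subseteq$, and it obtains the second assertion by identifying the fixed points with $W(v_0)/W_{\tau(v_0)}$. For the reverse inclusion you use the right $P_{\tau(v_0)}$-invariance from Lemma \ref{l:tau-para} together with \eqref{e:wv0}, which is exactly the alternative argument the paper gives in Remark \ref{r:fixedpoint-smooth}; this makes explicit a point the paper's main proof leaves implicit, namely that every closed orbit with the blockwise sign counts $(p_i,q_i)$ lies in $X_{v_0}$.
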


\begin{proof}
We have 
\begin{equation}
X_{v_0}^T = \bigcup_{u \leq v_0, X_u \text{closed}}X_u^{T} =  \{ w B \mid w \in W(v_0) \},
\end{equation}
where the first equality follows from Lemma \ref{lem6.3}, and the second from Proposition \ref{p:fixedpoint-closed}.
This proves \eqref{eqn4.4}.  The second assertion follows because $W_{\tau(v_0)}$
acts freely and transitively on the fibers of $(G/B)^T \to (G/P_{\tau(v_0)})^T$.
\end{proof}

\begin{remark} \label{r:fixedpoint-smooth}
An alternative argument for \eqref{eqn4.4} is as follows.  By Lemma \ref{l:tau-para}, $X_{v_0} = K \dot{v}_0 P_{\tau(v_0)}/B$.  Thus, $w_u B/B \in K \dot{v}_0 P_{\tau(v_0)}/B$,
so $X_{v_0} = K w_u P_{\tau(v_0)}/B$.
Multiplication in $G$ induces a surjection
\begin{equation} \label{e:fixedpoint-smooth}
f: K w_u B \times^B P_{\tau(v_0)}/B \to K w_u P_{\tau(v_0)}/B = X_{v_0}.
\end{equation}
Applying the description of $X_u^T$ above, together with the description of fixed points in mixed spaces (see \cite[Section 3]{GZ}, cf.~Section \ref{ss:mixed}), we see that
$$
(K w_u B \times^B P_{\tau(v_0)}/B)^T = \{ [w_1, w_2 B/B] \mid w_1 \in W_K w_u, w_2 \in W_{\tau(v_0)}\}.
$$
Thus,
$$
X_{v_0}^T = f((K w_u B \times^B P_{\tau(v_0)}/B)^T) = \{ w B \mid w \in W_K w_u W_{\tau(v_0)} \}.
$$
Equation \eqref{eqn4.4} follows since $W_K w_u W_{\tau(v_0)} = W(v_0)$, as noted above.
Note that \eqref{e:fixedpoint-smooth} need not induce a bijection on $T$-fixed points.  For example, if $v_0 = (1 1 | 2 + 2)$, $u = (+ - - + +)$, then
$w_u = (1 4 5 2 3)$.  The points $[s_2 w_u, eB/B]$ and $[w_u, s_4 B/B]$ are different but have the same image under $f$.
\end{remark}

\section{Resolutions of $K$-orbit closures} \label{s:resolution}
In this section we review some results about resolutions of $K$-orbit closures for general
$G$ and $K$.  We work more generally than the particular family of resolutions
which is our focus, because some of our results (for example, descriptions of
tangent spaces) apply in this wider setting.

\subsection{Bott-Samelson type resolutions of $K$-orbit closures} \label{ss.generalities-resolutions}
Let $v_0 \in V$, and let $w = s_{i_1} s_{i_2} \cdots s_{i_k}$ be a reduced expression.  For each $j = 1, \ldots, k$, let
$v_j = v_{j-1} * s_{i_j}$.  Let $v = v_k$.  
There is a natural map
\begin{equation} \label{e:generalmu}
\mu: G_{v_0} \times^B P_{i_1} \times^B \cdots \times^B P_{i_k}/B \to X_v.
\end{equation}
By induction, reasoning as in the discussion of \eqref{e:morphism},
we see the following.
Suppose that $\ell(v) = \ell(v_0) + \ell(w)$. Then this map is generically finite.
Each $s_{i_j}$ is of $v_{j-1}$-type either (1) complex ascent, or (2) non-compact imaginary.
If in addition whenever $s_{i_j}$ is of $v_{j-1}$-type noncompact imaginary it is of type $I$ (which is always the case
for $G = GL(n)$ and $K = GL(p) \times GL(q)$), then $\mu$ is birational, so
if $X_{v_0}$ is smooth, then $\mu$ is a resolution of singularities.

Suppose $s_{i_1}, \ldots, s_{i_k}$ are all distinct, and write $I = \{ s_{i_1}, \ldots, s_{i_k}  \}$. 
Recall that $Y_w$ denotes the Schubert variety $\overline{BwB}/B$. 
By \cite[p. 50]{Springer98}, $Y_w=\overline{BwB}/B=P_{i_1}P_{i_2}\cdots P_{i_k}/B=P_I/B$.
We claim that the multiplication map yields an isomorphism
\begin{equation} \label{e.Bott-Samiso}
m:P_{s_{i_1}} \times^B P_{s_{i_2}} \times^B \cdots  \times^B P_{s_{i_k}}/B \to Y_w = P_I/B.
\end{equation}
Indeed, let $I' = I \smallsetminus \{ i_1 \}$.  We may assume by induction that
$P_{i_2} \times^B \cdots \times^B P_{i_r}/B \to P_{I'}/B$ is an isomorphism.
 We must show that
$m: P_{i_1} \times^B P_{I'}/B \to P_I/B$
 is an isomorphism. 
To see this, note that $P_{i_1} \cap P_{I'} = B$, so the fiber of $m$ over $eB$ is a point.  As the point $eB$ is in the closure of every $B$-orbit on $P_I/B$, semicontinuity of dimension 
implies that all fibers of $m$ are $0$-dimensional.
By Zariski's connectedness theorem, the fibers of $m$ are connected, and thus $m$ is bijective. Since $Y_w$ is normal,  Zariski's Main Theorem implies that $m$ is an isomorphism,
proving the claim.

 From \eqref{e:generalmu} and the isomorphism \eqref{e.Bott-Samiso}, we obtain
\begin{equation} \label{e:morphism3}
\mu: G_{v_0} \times^B P_I/B \to X_v,
\end{equation}
which is generically finite if $\ell(v) = \ell(v_0) + \ell(w)$, and birational under the additional hypothesis above.

If $\alpha_i$ is a simple root,
let $L_i = L_{\alpha_i}$ denote the standard Levi subgroup of $P_i = P_{\alpha_i}$, and let
$L'_i$ denote its derived subgroup, which is isomorphic to $SL(2)$ or $PGL(2)$.

\begin{lemma}\label{l:commutingsimple}
Let $I = \{s_{i_1} , . . . , s_{i_r}\} \subset S$ be a set of commuting simple reflections. 
Then
$$
P_I/B \cong P_{i_1}/B \times P_{i_2}/B \times \cdots \times P_{i_r}/B \cong(\mathbb{P}^1)^r.
$$
The group $L = L_I$
acts transitively on $P_I/B$.
\end{lemma}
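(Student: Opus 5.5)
The plan is to reduce to the Levi subgroup $L = L_I$ and use that commuting simple roots give commuting $SL(2)$'s. Since the reflections in $I$ pairwise commute, the corresponding simple roots $\alpha_{i_j}$ are pairwise orthogonal and no sum $\pm\alpha_{i_j}\pm\alpha_{i_k}$ ($j\neq k$) is a root. Hence the root system of $L$ is $\{\pm\alpha_{i_1},\ldots,\pm\alpha_{i_r}\}$, and the derived subgroup $L'$ of $L$ is generated by the $L'_{i_j}$. These subgroups commute pairwise, because their Lie algebras $\fg_{\pm\alpha_{i_j}}$ and $\fg_{\pm\alpha_{i_k}}$ bracket to zero. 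So $L'$ is a quotient of $L'_{i_1}\times\cdots\times L'_{i_r}$ by a finite central subgroup.

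Next use $P_I = L\ltimes U_I$ with $U_I\subseteq B$, which gives $P_I/B \cong L/(L\cap B)$. Here $B_L = L\cap B$ is a Borel subgroup of $L$; it contains $H$ and the root subgroups $U_{\alpha_{i_j}}$. This already shows that $L$ acts transitively on $P_I/B$, which is the last assertion. Moreover, $L = H\cdot L'$, and $H \subseteq B_L$, so $L/B_L = L'/(L'\cap B_L)$.

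The central quotient of the product acts trivially on the flag variety, since central elements lie in every Borel subgroup. Therefore $L'/(L'\cap B_L) \cong \prod_j L'_{i_j}/(L'_{i_j}\cap B)$. Here the Borel subgroup of the product is the product of the Borels $L'_{i_j}\cap B$ of the factors. Finally, $L'_{i_j}/(L'_{i_j}\cap B) \cong P_{i_j}/B \cong \mathbb{P}^1$ by the same Levi argument applied to the single reflection $s_{i_j}$.

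It remains to match this with the natural map. The multiplication map of \eqref{e.Bott-Samiso} is already an isomorphism $P_{i_1}\times^B\cdots\times^B P_{i_r}/B \to P_I/B$. The product identification above can be taken to be the map $P_I/B \to \prod_j P_I/P_{I\smallsetminus\{s_{i_j}\}}$, with each factor $P_I/P_{I\smallsetminus\{s_{i_j}\}} \cong P_{i_j}/B$.

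The main obstacle is only bookkeeping: checking that the product map is an isomorphism and not merely a bijection. To handle this, compute via the group-theoretic description $L'/(L'\cap B_L)$, which is a homogeneous space under a product group with a product stabilizer (up to a central subgroup acting trivially). That makes the isomorphism immediate.
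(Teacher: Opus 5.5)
Your proof is correct and follows essentially the same route as the paper: pass to the Levi factor, identify $P_I/B$ with $L'/(L'\cap B)$, split $L'$ into the commuting rank-one pieces $L'_{i_j}$, and recognize each quotient $L'_{i_j}/(L'_{i_j}\cap B)$ as $P_{i_j}/B\cong\mathbb{P}^1$. Two small refinements over the paper: you note that $L'$ is in general only a quotient of $\prod_j L'_{i_j}$ by a finite central subgroup, which is harmless because that subgroup lies in every Borel subgroup (the paper simply asserts $L'\cong\prod_j L'_{i_j}$), and you make the transitivity of $L$ explicit where the paper leaves it implicit.
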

\begin{proof}
Let $P_I = LU$ be a Levi decomposition, and let $L'$ denote the closed subgroup of $L$ corresponding to the semisimple
part $\fl'$ of $\fl$.  Since $I$ is a set of commuting simple reflections, $L' \cong L'_{i_1} \times \cdots \times L'_{i_r}$.  Then
$$
P_I /B \cong L'/(L' \cap B) \cong L'_{i_1}/(L'_{i_1} \cap B) \times \cdots \times L'_{i_r}/(L'_{i_r} \cap B) \cong P_{i_1}/B \times P_{i_2}/B \times \cdots \times P_{i_r}/B.
$$
Since $P_{i_k}$ is a minimal parabolic subgroup, each $P_{i_k}/B$ is isomorphic to $\mathbb{P}^1$.  
\end{proof}

\subsection{More general constructions of resolutions} \label{ss:moregeneral}
The construction in \eqref{e:generalmu} can be generalized.  
Let \(v_0\in V\) and let \(w_1,\ldots,w_m\in W\).  To simplify notation, we will sometimes
write $u_0 = v_0$ and $u_k = w_k$ for $1 \leq k \leq m$.  Note that
$u_0$ will correspond to a $K$-orbit, and $u_k$ to a $B$-orbit for $k \geq 1$. 

For \(1 \leq k\leq m\), let
\(J_k=\tau(u_{k-1})\cap\tau(u_{k}^{-1})\).
Suppose \(J_{m+1}\subseteq\tau(w_{m+1})\).
Let
$G_{v_0} = \overline{K v_0 B}$, and
for every $1\leq k\leq m$,  let
$G_{w_k} = \overline{Bw_kB}$.
Set 
\begin{equation}\label{e:generalZ}
Z=G_{v_0}\times^{P_{J_1}} G_{w_1}\times^{P_{J_2}}\cdots\times^{P_{J_m}}G_{w_m}/P_{J_{m+1}}.
\end{equation}
Multiplication gives $\mu: Z \to X_v = \overline{K \dot{v }B}/P_{J_{m+1}}$, where
$v = v_0*w_1 * \cdots * w_m$.  If $Z$ is smooth and $\mu$ is birational, then
$\mu$ is a resolution of singularities.  

If $G$ is simply laced, $w \in W$, and $G_w = \overline{BwB}$ is smooth, then 
by \cite[Theorem 5.5]{Lar2024}, $G_w$ is isomorphic to
an iterated fiber bundle.  Precisely, there are subsets $I_1, \ldots, I_k$ of $S$,
with $I_1 = \tau(w^{-1})$, $I_k = \tau(w)$, such that the multiplication map
\begin{equation} \label{e:smoothSchubert}
P_{I_1} \times^{R_1} P_{I_2} \times^{R_2} \times \cdots \times^{R_{k-1}} P_{I_k} \stackrel{\cong}{\longrightarrow} G_w,
\end{equation}
is an isomorphism, where
$R_j = P_{I_j} \cap P_{I_{j+1}}$.  If $Z$ is smooth, then so is each $G_{w_i}/B$.  
By replacing each $G_{w_i}$ by an iterated fiber
bundle of the form \eqref{e:smoothSchubert}, and reindexing (changing $m$ if necessary), we obtain
\begin{equation}\label{e:generalZ-A}
Z= G_{v_0} \times^{P_{J_1}} P_{I_1} \times^{P_{J_2}} \cdots \times^{P_{J_m}} P_{I_m}/P_{J_{m+1}}.
\end{equation}
Note that $P_{I} = G_{w_I}$, where $w_I$ is the longest element of the subgroup of $W$ generated by $I$.
Thus, \eqref{e:generalZ-A} is the special case of \eqref{e:generalZ} where $G_{w_k} = P_{I_k}$.  
If $Z$ is smooth and $G$ is simply laced, then $Z$ can always be expressed
as in \eqref{e:generalZ-A}.  
If $G_{v_0} = K \dot{v}_0 P_{\tau(v_0)}$, then 
\eqref{e:generalZ-A} becomes
\begin{equation}\label{e:generalZ-A2}
Z=  K \dot{v}_0 P_{\tau(v_0)} \times^{P_{J_1}} P_{I_1} \times^{P_{J_2}} \cdots \times^{P_{J_m}} P_{I_m}/P_{J_{m+1}}.
\end{equation}
By Lemma \ref{l:tau-para}, $G_{v_0} = K \dot{v}_0 P_{\tau(v_0)}$ if $G = GL(n)$ and $K = GL(p) \times GL(q)$
and $G_{v_0}$ is smooth, so under these hypotheses, we can assume that $Z$ is as in \eqref{e:generalZ-A2}.

By substituting $G_{v_0} = G_{v_0}  \times^{P_{\tau(v_0)}} P_{\tau(v_0)}$ into \eqref{e:generalZ-A} 
or \eqref{e:generalZ-A2}, we can assume $J_1 = I_1 = \tau(v_0)$ if desired.  This assumption implies
that in \eqref{e:generalZ-A2}, all of the successive quotients $K v_0 P_{\tau(v_0)} /P_{J_1}, P_{I_1} /P_{J_2}, P_{I_2}/P_{J_3}, \ldots$ are homogeneous varieties.
This will facilitate the computation of tangent spaces in Section \ref{ss:tangentK}.

\section{Irreducible characteristic cycles} \label{s:irreducible}
This section contains our main results, which concern the following family of resolutions for $G = GL(n)$ and $K = GL(p) \times GL(q)$.
Let $X_{v_0}$ be a smooth orbit closure in $G/B$, $I$ a set of commuting simple reflections, and
such that  $\ell(v_0* w_I) = \ell(v_0)+ |I|$.  Let $v = v_0 * w_I$, and write $Z_v = Z_{v_0, I} = G_{v_0} \times^B P_I/B$.
Then
\begin{equation} \label{e:morphism2}
\mu: Z_v = G_{v_0} \times^B P_I/B \to X_v
\end{equation}
is a resolution of singularities.  In Section \ref{ss:small} we recall a result of Larson characterizing when the morphism
\eqref{e:morphism2} is small.  In Section \ref{ss:fiber-cc} we prove that this morphism has smooth and strongly reduced fibers.
As a consequence, when the resolution is small, the characteristic cycle of $X_v$ is irreducible.  
In Section \ref{ss:isomorphic}, we give some alternative presentations of the resolution \eqref{e:morphism2}, which are
useful in analyzing the fibers.

\subsection{Smallness} \label{ss:small}
The following result of Larson (see \cite[Prop.~5.3.2]{Lar}) characterizes when \eqref{e:morphism2} is a small resolution.

\begin{proposition} \label{p:small}
Let $G = GL(n)$ and $K = GL(p) \times GL(q)$.  Let $X_{v_0}$ be a smooth orbit closure in $G/B$, and let $I$ be a set of commuting simple reflections such that  $\ell(v_0* w_I) = \ell(v_0)+ |I|$ and $\ell(y* w_I) = \ell(y) + |I|$ for all $y \leq v_0$ with $\ell(y) = \ell(v_0) - 1$.   Then $\mu: G_{v_0} \times^B P_I/B \to X_v$
is a small resolution.
\end{proposition}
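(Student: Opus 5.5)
We aim to show that for every $K$-orbit $K\dot{y}B/B \subseteq X_v$ other than the open one, the fiber $F_y = \mu^{-1}(\dot{y}B/B)$ satisfies $2\dim F_y < \operatorname{codim}_{X_v}(K\dot{y}B/B) = \ell(v)-\ell(y)$. Birationality of $\mu$ is already known from Section \ref{ss.generalities-resolutions}. Note that $\ell(v) = \ell(v_0)+|I|$.

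\textbf{Step 1: identify the fibers.} We apply Proposition \ref{p:fiber-general} with $M = G_{v_0} = K\dot{v}_0P_{\tau(v_0)}$ (Lemma \ref{l:tau-para}), $R = B$ and $P = P_I$. This gives $F_y \cong G_{v_0}/B \cap \dot{y}P_I/B$, that is, the set of points of $\dot{y}P_I/B \cong (\mathbb{P}^1)^{|I|}$ (Lemma \ref{l:commutingsimple}) lying in $X_{v_0}$. Using Proposition \ref{p:fiber-general}(3), we may take $\dot{y}B/B \in X_{v_0}$, so $y \le v_0$.

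For each $s \in I$ we look at the $\mathbb{P}^1$-factor $\dot{y}P_s/B$:
\begin{itemize}
\item If $s \notin \tau(y)$, the curve $\dot{y}P_s/B$ meets the larger orbit $y*s$, which is not contained in $X_{v_0}$ when $y*s\not\le v_0$.
\item If $s \in \tau(y)\setminus\tau(v_0)$, then $s$ is $y$-compact imaginary by Lemma \ref{l:compact-im}, so $\dot{y}P_s/B \subseteq K\dot{y}B/B \subseteq X_{v_0}$ by \eqref{e:compact-imag}.
\end{itemize}
Since $I\cap\tau(v_0)=\emptyset$, we expect, using that $G_{v_0}$ is right $P_{\tau(v_0)}$-stable and the product structure from Lemma \ref{l:commutingsimple}, that $F_y \cong (\mathbb{P}^1)^{d}$, where $d = |I\cap\tau(y)|$. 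The first bullet shows that the factors with $s\notin\tau(y)$ contribute only a point.

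\textbf{Step 2: reduce to a length inequality.} By Step 1, $\dim F_y = d$. Write $J = I\cap \tau(y)$ and $I' = I\setminus J$. Then $y*w_I = y*w_{I'}$, so $\ell(y*w_I)\le \ell(y)+|I|-d$.

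On the other hand, $K\dot{y}P_I/B$ has dimension $\ell(y*w_I)$, and it is the image under $\mu$ of a subset of $Z_v$. The target inequality is
\[
2d < \ell(v_0)+|I|-\ell(y).
\]
If $y=v_0$ then $d=0$, and the fiber is a point off the open orbit. In general it suffices to show $\ell(v_0)-\ell(y) \ge 2d - |I| + 1$, and it is enough to prove $\ell(v_0)-\ell(y)\ge d+1$ whenever $d\ge 1$, or $y\ne v_0$ with $d=0$.

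\textbf{Step 3: the main obstacle.} The key step is using the hypothesis on codimension-one $y$ to bound $d$, and I expect this to be the hard part. The hypothesis says that for $y'\le v_0$ with $\ell(y')=\ell(v_0)-1$, we have $\ell(y'*w_I)=\ell(y')+|I|$. By Lemma \ref{l:tau2}, and since each $s\in I$ straddles blocks of $y'$ (Lemma \ref{l:concatenation}), this means $I\cap\tau(y')=\emptyset$.

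I would then argue by induction down the Bruhat order using the block decomposition. By Lemma \ref{l:compact-im}, $s\in\tau(y)$ forces the two signs adjacent across the block boundary to be equal. The aim is to show that each such boundary requires at least one length drop within the adjacent blocks, with distinct boundaries using disjoint drops because the reflections in $I$ commute, and that an extra drop is needed beyond this, which is exactly what the codimension-one hypothesis forbids from being free. Concretely, I would try to prove
\[
\ell(v_0)-\ell(y) \ge d+1 \quad \text{whenever } d\ge1,
\]
using the explicit clan description of maxima $\max(V_{p_i,q_i})$ and of the closure order. If that combinatorial bound proves hard, a fallback is to compute $\dim \mu^{-1}(K\dot{y}B/B)$ via Proposition \ref{p:orbitnumber}, which makes the preimage of each orbit a single orbit, and compare it with $\dim Z_v$.
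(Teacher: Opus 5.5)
The paper does not prove this proposition; it quotes it from Larson \cite[Prop.~5.3.2]{Lar}, whose criterion is the absence of the configurations \eqref{e:badconfig}. Your overall strategy, comparing $d=|I\cap\tau(y)|$ with codimensions, is the natural one, and Step~1 can simply be replaced by Theorem \ref{t:fiberhomog}(1), which does not use smallness. As written, your first bullet only shows that a $\mathbb{P}^1$-factor with $s\notin\tau(y)$ meets $X_{v_0}$ in a finite set, and it does not justify the product structure.

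\emph{Step~2 is incomplete.} You only test the orbits $K\dot yB/B$ with $y\le v_0$, but smallness must hold at every orbit of $X_v$. By $K$-equivariance and Proposition \ref{p:fiber-general}(2), the same fiber $F_y$ lies over all of $K\dot yP_I/B$. The open orbit there is $y*w_{I'}$, with $I'=I\smallsetminus\tau(y)$, and it has length $\ell(y)+|I|-d$ by Lemma \ref{l:tau2}. At that orbit the smallness condition reads $2d<\ell(v_0)-\ell(y)+d$, i.e.\ $\ell(v_0)-\ell(y)\ge d+1$ for every $y<v_0$ with $d\ge 1$. So your intermediate inequality $\ell(v_0)-\ell(y)\ge 2d-|I|+1$ does not suffice. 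Your final target is the right one, but it is necessary, not merely ``enough,'' and you need to say why.

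\emph{Step~3 is the entire content of the proposition, and it is left unproved.} The mechanism you suggest is also wrong. Boundaries do not consume disjoint drops, and commutativity of $I$ says nothing about this: one block can sit between two elements of $I$, and a single drop can serve both. For $v_0=(11|22|33)$, $I=\{s_2,s_4\}$, $y=(+-|-+|+-)$, the middle block has codimension $1$ and serves both boundaries. Moreover, the codimension-one hypothesis is vacuous at a boundary between two blocks with $p_k,q_k\ge 1$, since such blocks begin and end with numbers. So it is not what supplies ``the extra drop'' there.

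A complete argument needs three ingredients.

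(i) In $X_{\max(V_{p,q})}$, orbits whose clan begins with $+$ (resp.\ $-$) lie in $\{E^1\subseteq\C^p\}$ (resp.\ $\{E^1\subseteq\C^{-q}\}$), which has codimension $q$ (resp.\ $p$). The same holds for the last entry. Prescribing both end signs forces codimension at least $2q$, $2p$, or $p+q-1$.

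(ii) The codimension-one hypothesis says exactly this: a block meeting a block of $+$'s (resp.\ $-$'s) across an element of $I$ has $q_k\ge 2$ (resp.\ $p_k\ge 2$). This is the list \eqref{e:badconfig}.

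(iii) A counting argument. Each $s\in I\cap\tau(y)$ has equal signs on both sides (Lemma \ref{l:compact-im}). It also has at least one neighbouring block with $p_k,q_k\ge 1$, since otherwise $s\in\tau(v_0)$. By (i) and (ii), such a block $B$ has codimension at least $1+a_B$, where $a_B$ counts its boundaries in $I\cap\tau(y)$ whose other side is a block of equal signs. The boundaries in $I\cap\tau(y)$ joining two such blocks form disjoint paths, so the number of such blocks exceeds the number of these boundaries by at least one. Codimension is additive over blocks, as in the proof of Lemma \ref{l:concatenation}, so summing gives $\ell(v_0)-\ell(y)\ge d+1$.
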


The conditions of the proposition are made  explicit in terms of the clan $v_0$ in \cite[pp.~65-66]{Lar}.  The results can be summarized as follows.
Since $v_0$ is smooth, we have $v_0 = (v^1, \cdots, v^h)$, where $v^k = \max(V_{p_k,q_k})$.  Then $v_0$ satisfies the criteria of the proposition
if and only if for all $s_{i} \in I$, the clan $v_0$ does not contain one of the following four configurations:
\begin{equation} \label{e:badconfig}
\epsilon | a a, \hspace{.2in} a a | \epsilon,  \hspace{.2in} \epsilon | a \epsilon,  \hspace{.2in} \epsilon a | \epsilon.
\end{equation}
Here, the bar is assumed to be between entries $i$ and $i+1$ of $v_0$, where $i$ and $i+1$ are indices corresponding respectively
to the end of a block and the beginning of a block;
$\epsilon$ denotes a sign (either $+$ or $-$), and $a$ is a natural number.
See \cite[p.65, (5.10)]{Lar} for more details. 

\subsection{Fibers and characteristic cycles} \label{ss:fiber-cc}
In this section we prove (Theorem \ref{t:fiberhomog}) that the resolution $\mu$ of \eqref{e:morphism2} has smooth and strongly reduced fibers.  With an additional hypothesis, this resolution is also small, so the characteristic cycle of the target is irreducible (Theorem \ref{t:irreducible}).  

\begin{theorem} \label{t:fiberhomog}
Let $G = GL(n)$ and $K = GL(p) \times GL(q)$.  Let $X_{v_0}$ be a smooth orbit closure in $G/B$, let $I$ be a set of commuting simple reflections such that 
$I \cap \tau(v_0)$ is empty, and let $v = v_0*w_I$.  Let $\mu: Z_v : = G_{v_0} \times^B P_I/B \rightarrow X_v$ be the resolution of $X_v$ discussed above.  Then:
\begin{enumerate}
\item All fibers of $\mu$ are smooth.
 More precisely, for any good base point $\dot{y}B/B\in X_v$, the fiber $\mu^{-1}(\dot{y}B/B)$ is isomorphic
to $\dot{y}P_{I \cap \tau(y) }/B \cong {\mathbb P}^k$, where $k = | I \cap \tau(y) |$.  Moreover,
$\dot{y}P_{I \cap \tau(y) }/B \cong {\mathbb P}^k$
is homogeneous for
 $K \cap \dot{y} P_{I \cap \tau(y)} \dot{y}^{-1}$.

\item All fibers of $\mu$ are strongly reduced.
\end{enumerate}
\end{theorem}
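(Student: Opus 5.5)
The plan is to apply Proposition \ref{p:fiber-general} with $M = G_{v_0}$, $R = B$ and $P = P_I$, so that $Z_v = G_{v_0}\times^B P_I/B$ and $Y = G_{v_0}P_I = G_v$. Part (1) of that proposition identifies the fiber over a good base point $\dot yB/B$, with $\dot y\in G_{v_0}$, as
$$F_y := \mu^{-1}(\dot yB/B) \cong X_{v_0}\cap \dot yP_I/B .$$
Part (3) says every fiber is of this form. Part (4) reduces strong reducedness to good base points. Replacing $\dot y$ by a $K$-translate, I may take $\dot y$ to be a standard representative (in $\mathcal V$) of an orbit $y\le v_0$. Then ${}^{\dot y}H$ is $\theta$-stable, and the torus $({}^{\dot y}H\cap K)_0$ fixes $\dot yB/B$. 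By Lemma \ref{l:commutingsimple}, $\dot yP_I/B\cong\prod_{s\in I}\dot yP_s/B$, and $T_{\dot yB}(\dot yP_I/B)$ is the sum of the lines spanned by $\Ad(\dot y)Z_{-\alpha_s}$ for $s\in I$. These lines have distinct weights for that torus.

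\emph{Step 1 (containment).} Let $s\in I\cap\tau(y)$. Since $I\cap\tau(v_0)=\emptyset$, Lemma \ref{l:compact-im} shows that $s$ is $y$-compact imaginary. By \eqref{e:compact-imag}, $\dot yP_s/B\subseteq K\dot yB/B$. Every point of $\dot yP_s/B$ lies in the same $K$-orbit, so types do not change, and the elements of $I$ commute. Iterating therefore gives $\dot yP_{I\cap\tau(y)}/B\subseteq K\dot yB/B\subseteq X_{v_0}$, and hence $\dot yP_{I\cap\tau(y)}/B\subseteq F_y$.

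\emph{Step 2 (homogeneity and equality).} Under the identification above, $F_y$ is the fiber over $\dot yP_I/P_I$ of $\pi_{v_0}\colon X_{v_0}\to\overline{K\dot v_0P_I}/P_I$. By Proposition \ref{p:orbitnumber} (its hypotheses hold by Lemma \ref{l:tau2}), $\pi_{v_0}$ induces a bijection on $K$-orbits. Proposition \ref{p:homogeneous} then shows that $F_y$ is a single orbit of $K\cap\dot yP_I\dot y^{-1}$, so $F_y$ is smooth.

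To identify $F_y$, I compute its tangent space at $\dot yB$. Lemma \ref{l:tau-para} gives $X_{v_0}=K\dot yP_{\tau(v_0)}/B$, so
$$T_{\dot yB}X_{v_0}=(\fk+\Ad(\dot y)\fp_{\tau(v_0)})/\Ad(\dot y)\fb .$$
Both this subspace and $T_{\dot yB}(\dot yP_I/B)$ are stable under the torus, so their intersection is a sum of some of the lines $\Ad(\dot y)Z_{-\alpha_s}$. Since $I\cap\tau(v_0)=\emptyset$, the summand $\fp_{\tau(v_0)}$ contributes none of the $-\alpha_s$ directions. So the line for $s$ lies in the intersection exactly when $\Ad(\dot y)Z_{-\alpha_s}\in\fk+\Ad(\dot y)\fb$, that is, when $\dot yP_s/B$ is tangent to $K\dot yB/B$.
\begin{itemize}
\item If $s\in I\cap\tau(y)$, this holds by Step 1.
\item If $s\in I\smallsetminus\tau(y)$, then $y*s>y$. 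For complex ascents and noncompact imaginary roots, $\Ad(\dot y)Z_{-\alpha_s}$ has a nonzero component outside $\fk+\Ad(\dot y)\fb$, so the line is not in the intersection.
\end{itemize}
Hence $\dim T_{\dot yB}F_y\le\dim\bigl(T_{\dot yB}X_{v_0}\cap T_{\dot yB}(\dot yP_I/B)\bigr)=|I\cap\tau(y)|$. $F_y$ is connected by Zariski's main theorem ($X_v$ is normal and $\mu$ is birational) and smooth by homogeneity. It contains the closed irreducible variety $\dot yP_{I\cap\tau(y)}/B$, which has dimension $|I\cap\tau(y)|$. Therefore $F_y=\dot yP_{I\cap\tau(y)}/B\cong(\mathbb P^1)^k$, which is what the statement's $\mathbb P^k$ should mean here, and $F_y$ is homogeneous for $K\cap\dot yP_{I\cap\tau(y)}\dot y^{-1}$. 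This proves (1).

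\emph{Step 3 (strongly reduced).} By Proposition \ref{p:fiber-general}(4), it suffices to treat good base points. The stabilizer $K\cap\dot yP_I\dot y^{-1}$ acts on $Z_v$ compatibly with $\mu$ and transitively on $F_y$, so it suffices to check the single point $z_0=[\dot y,eB/B]$. At $z_0$, the tangent space $T_{z_0}Z_v$ is an extension of $T_{eB}(P_I/B)$ by $T_{\dot yB}X_{v_0}$, and $d\mu_{z_0}$ is the sum map into $T_{\dot yB}X$. Its kernel is identified with $T_{\dot yB}X_{v_0}\cap T_{\dot yB}(\dot yP_I/B)$, which has dimension $|I\cap\tau(y)|=\dim F_y$ by Step 2. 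Since $T_{z_0}F_y\subseteq\ker d\mu_{z_0}$ always holds, the two are equal. This proves (2).

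The main obstacle is the tangent-space calculation in Step 2: showing that for $s\in I\smallsetminus\tau(y)$ the direction $\Ad(\dot y)Z_{-\alpha_s}$ is not in $\fk+\Ad(\dot y)\fp_{\tau(v_0)}$ modulo $\Ad(\dot y)\fb$. I would check this case by case, using the clan description of complex ascents and noncompact imaginary roots, together with the block structure of $v_0$ and the fact that each $s\in I$ straddles two blocks.
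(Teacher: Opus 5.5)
Your route differs genuinely from the paper's and can be made to work. Two steps need repair: one claim in Step 3 is false as stated, and the key computation in Step 2 needs a restricted-weight argument rather than the one you sketch.

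\emph{Step 3.} The group $K\cap\dot yP_I\dot y^{-1}$ does not act on $Z_v$ preserving $F_y$. Under the $K$-action on $Z_v$, an element $k$ carries $F_y=\mu^{-1}(\dot yB/B)$ to $\mu^{-1}(k\dot yB/B)$. The elements of $K$ preserving $F_y$ form $K\cap\dot yB\dot y^{-1}$, and these fix $z_0=[\dot y,eB/B]$, so they cannot move other points of the fiber to $z_0$. The paper flags exactly this obstruction. It gets around it by passing to $\widetilde Z_v=G_{v_0}\times^BP_I$ with the twisted action $k\mapsto(k,\dot y^{-1}k^{-1}\dot y)$, together with Lemma~\ref{l:changefiber}.

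In your framework the fix is simpler. Use the model $Z_v\cong X_{v_0}\times_{G/P_I}G/B$, with $\mu$ the second projection. Then $\ker d\mu_{(E,\dot yB)}=T_EX_{v_0}\cap T_E(\dot yP_I/B)$ at every point $(E,\dot yB)\in F_y$, not just at $z_0$. The group $K\cap\dot yP_I\dot y^{-1}$ acts on $G/B$ preserving $X_{v_0}$ and $\dot yP_I/B$, and acts transitively on $X_{v_0}\cap\dot yP_I/B$ by your Step 2. So the dimension of this intersection is constant along the fiber, and it suffices to treat $E=\dot yB$.

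\emph{Step 2.} The remark that $\fp_{\tau(v_0)}$ ``contributes none of the $-\alpha_s$ directions'' does not suffice, because $A=({}^{\dot y}H\cap K)_0$ only sees restricted weights. The $A$-weight of $\Ad(\dot y)\fg_\beta$ depends only on $\beta+\phi(y)\beta$. So for $s=s_i$, the weight space containing $\Ad(\dot y)Z_{-\alpha_i}$ is spanned by the $\Ad(\dot y)\fg_{\epsilon_a-\epsilon_b}$ with $a\in\{i+1,\phi(y)(i+1)\}$ and $b\in\{i,\phi(y)(i)\}$.

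What saves you is that mates in $y$ stay inside the blocks of $v_0$ (Lemma~\ref{l:concatenation}), while $s_i$ straddles a block boundary. Hence $b\le i<a$ for all these roots. Since $s_i\notin\tau(v_0)$, none of them is a root of $\fp_{\tau(v_0)}$ (or of $\fb$). Also none equals $-\alpha_j$ for $j\neq i$ with $s_j\in I$. This last fact is what proves your ``distinct weights'' claim, which fails without the block structure (e.g.\ $y=(1212)$ with $s_1,s_3$).

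Projecting onto this weight space, $\Ad(\dot y)Z_{-\alpha_i}\in\fk+\Ad(\dot y)\fp_{\tau(v_0)}$ holds iff $\Ad(\dot y)Z_{-\alpha_i}\in\fk$. That holds iff $\alpha_i$ is $y$-compact imaginary, since $\theta$ maps $\Ad(\dot y)\fg_{-\alpha}$ onto $\Ad(\dot y)\fg_{-\theta_y\alpha}$. By Lemma~\ref{l:compact-im}, this is iff $s_i\in\tau(y)$. No case-by-case clan analysis is needed.

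With these repairs your argument is complete, and your reading of $\mathbb P^k$ as $(\mathbb P^1)^k$ is correct. The paper, by contrast, computes no tangent spaces:
\begin{itemize}
\item For (1), it restricts $\mu$ to $K\dot yB\times^BP_I/B$ via Proposition~\ref{p:orbitnumber} and gets smoothness from $K\times P_I$-homogeneity. It then identifies the fiber by the dimension count $\ell(y)+|I|-(\ell(y)+|J|)$ from Lemma~\ref{l:tau2}.
\item For (2), it uses the smaller resolution $Z_{v'}$, with $v'=v_0*w_J$ and $J=I\smallsetminus\tau(y)$. By part (1) and Zariski's Main Theorem this is a local isomorphism at $[\dot y,e]$, which supplies a complement to $T_zF$ on which $d\mu$ is injective.
\end{itemize}
Your version gets homogeneity directly from Proposition~\ref{p:homogeneous}. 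A single infinitesimal computation then yields both the identification of the fiber and strong reducedness. It also makes transparent that strong reducedness means exactly that $X_{v_0}$ and $\dot yP_I/B$ intersect cleanly. The price is the restricted-root bookkeeping, which uses the $GL(n)$ block structure essentially.
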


\begin{proof}
(1) Write $P = P_I$.  We claim that $\mu^{-1}(K\dot{y}B/B) \subseteq K\dot{y}B \times^B P/B$.  Indeed, suppose $[z,p] \in \mu^{-1}(K\dot{y}B/B)$; then $z \in
K\dot{u}B/B$ for some $u \leq v_0$.  Since $\mu([z,p]) \in K\dot{y}B/B$, we see that $K\dot{u}P/P = K\dot{y}P/P$, so by Proposition \ref{p:orbitnumber}, $K\dot{u}B = K\dot{y}B$,
proving the claim.  Let $\gamma$ denote the restriction of $\mu$ to $K\dot{y}B \times^B P/B$.  The claim implies that $\mu^{-1}(\dot{y}B/B) = \gamma^{-1}(\dot{y}B/B)$.

We have a Cartesian diagram
$$
\begin{CD}
K \dot{y} B \times^B P @>{\tilde{\gamma}}>> K\dot{y}P \\
@VVV @VVV \\
K \dot{y} B \times^B P/B @>\gamma>> K\dot{y}P/B 
\end{CD}
$$
Both the source and target of $\tilde{\gamma}$ are homogeneous for the group $K \times P$, so all fibers of $\tilde{\gamma}$ are isomorphic to
the quotient of two stabilizer groups: $(K \times P)^{\dot{y}} / (K \times P)^{[\dot{y}, 1]}$.  In particular, they are smooth.
Since the diagram is Cartesian, all fibers of $\gamma$ are isomorphic to fibers of $\tilde{\gamma}$ so the fibers of $\gamma$ are smooth.  
In particular, $\mu^{-1}(\dot{y}B/B) = \gamma^{-1}(\dot{y}B/B)$ is smooth.  We conclude that the fiber of $\mu$ over any good base point is smooth.  By
Proposition \ref{p:fiber-general}, all fibers of $\mu$ are smooth.

Since $X_v$ is normal and $\mu$ is birational, Zariski's connectedness theorem implies that $\mu$ has connected fibers.  Since $\mu^{-1}(\dot{y}B/B) = \gamma^{-1}(\dot{y}B/B)$,
$\gamma^{-1}(\dot{y}B/B)$ is connected.
Since $\gamma^{-1}(\dot{y}B/B)$ is smooth and connected, it is irreducible.

By Proposition \ref{p:fiber-general}, there is an isomorphism
\begin{equation} \label{e:resolution}
 K\dot{y}B/B \cap \dot{y}P/B \stackrel{\cong}{\longrightarrow} \gamma^{-1}(\dot{y}B/B), \hspace{0.2in} zB/B \mapsto [z, z^{-1}\dot{y}B/B ].
\end{equation}
Hence $K\dot{y}B/B \cap \dot{y}P/B$ is smooth and irreducible.
We will now prove that
\begin{equation} \label{e.fiber}
K\dot{y}B/B \cap \dot{y}P/B = \dot{y}P_{I \cap \tau(y) }/B.
\end{equation}
First, 
if $s\in\tau(y)\cap I$, then
by Lemma \ref{l:compact-im}, $s$ is $y$-compact imaginary, so by \eqref{e:compact-imag}, $\dot{y}P_s/B\subseteq K\dot{y}B/B$.  It follows that
$\dot{y}P_{I \cap \tau(y) }/B \subseteq K\dot{y}B/B \cap \dot{y}P/B$.

Let $J = I \setminus (I \cap \tau(y) )$.  The dimension of the image of $\gamma$ is
$\ell(y * w_I) = \ell(y*w_J) = \ell(y) + |J|$, where the second equality is by Lemma \ref{l:tau2}.  Since all fibers of $\gamma$ are isomorphic,
\begin{align*}
\dim \gamma^{-1}(y) & = \dim (K \dot{y}B \times^B P/B ) - \dim K\dot{y}P/B = \ell(y) + |I | - (\ell(y) + |J |) \\
& = |I \cap \tau(y)| = \dim \dot{y}P_{I \cap \tau(y) }/B.
\end{align*}
Since $\dot{y}P_{I \cap \tau(y) }/B$ is complete and has the same dimension as $ \gamma^{-1}(\dot{y}B/B) \cong K\dot{y}B/B \cap \dot{y}P/B$, it must be an irreducible component of $K\dot{y}B/B \cap \dot{y}P/B$.
Since $K\dot{y}B/B \cap \dot{y}P/B$ is irreducible, we conclude that $K\dot{y}B/B \cap \dot{y}P/B \cong  \dot{y}P_{I \cap \tau(y) }/B$.  Hence $\gamma^{-1}(\dot{y}B/B) \cong \dot{y}P_{I \cap \tau(y) }/B$.

We now show that $\dot{y}P_{I \cap \tau(y) }/B \cong {\mathbb P}^k$ is homogeneous for $K\cap \dot{y}P \dot{y}^{-1}$.  
Let $L' =  \prod_{i \in I \cap \tau(y) } L'_i $,  where $L'_i $ is as in
Section \ref{ss.generalities-resolutions}.
Then $L' \subset P_{I \cap \tau(y)}$,
If $s_i \in I \cap \tau(y) $ then, as noted above, $s_i$ is $y$-compact imaginary.
Hence 
$ \dot{y} L' \dot{y}^{-1}  \subset K \cap \dot{y} P_{I \cap \tau(y)}\dot{y}^{-1}$, so
$yL'y^{-1}$ acts
on $K\dot{y}B/B \cap \dot{y} P/B \cong \dot{y} P_{I \cap \tau(y)}$.  Lemma \ref{l:commutingsimple} implies that $\dot{y} L' \dot{y}^{-1}$ acts transitively on
$\dot{y} P_{I \cap \tau(y)} \cong  \mu^{-1}(\dot{y}B/B)$; hence so does the group $K \cap \dot{y} P_{I \cap \tau(y)} \dot{y}^{-1}$.  This proves (1).

(2) The action of $K \cap \dot{y} P_{I \cap \tau(y)} \dot{y}^{-1}$ on $\mu^{-1}(\dot{y}B/B)$ does not lift
to an action on $Z_v$ preserving $\mu^{-1}(\dot{y}B/B)$, so the map \eqref{e:resolution} cannot be
$K \cap \dot{y} P_{I \cap \tau(y)} \dot{y}^{-1}$-equivariant.  
We will circumvent this issue below by using $\widetilde{Z}_v$ instead of $Z_v$ at the
appropriate place.

We must prove that if $\dot{y}B/B \in X_v$ (that is, $y \leq v$),
and $z \in F = \mu^{-1} (\dot{y}B/B)$, then
\begin{equation} \label{e:kernel-deriv}
\mathrm{Ker}(d\mu_z)=T_z\mu^{-1}(\dot{y}B/B).
\end{equation}
By Proposition \ref{p:fiber-general},
we may assume $\dot{y}B/B$ is a good base point, i.e., 
$y \leq v_0$.  We may also assume that $\dot{y} \in \mathcal{V}$.  
The inclusion $\mathrm{Ker}(d\mu_z) \supseteq T_z F$ holds 
because
$\mu(F) = \{ \dot{y}B/B \}$, so $d\mu (T_z F) \subseteq T_z (\{ \dot{y}B/B \}) = \{ 0 \}$.  

Consider the Cartesian diagram
$$
\begin{CD}
\tilde{Z}_v = G_{v_0} \times^B P @>{\tilde{\mu}}>> G_v \\
@VVV @VVV \\
Z_v = G_{v_0} B \times^B P/B @>\mu>> X_v. 
\end{CD}
$$
We have $\mu^{-1}(\dot{y}B/B) \cong \tilde{\mu}^{-1}(\dot{y})$.
The map $\tilde{\mu}$ is $K \times P$-equivariant, where $G \times P$ acts on $\tilde{Z}_v$ by $(k, p) \cdot [g, q] = [k g, q p^{-1}]$, and on $G_v$ by $(k,p) \cdot g = k g p^{-1}$.
Define $\psi: K\cap \dot{y}P \dot{y}^{-1} \to K \times P$
by $\psi(k) = (k, \dot{y}^{-1} k^{-1} \dot{y})$.  Then $\tilde{\mu}$ is $K\cap \dot{y}P \dot{y}^{-1}$-equivariant, and the fiber $\tilde{\mu}^{-1}(\dot{y})$ is preserved
by this action.  As in the first paragraph of the proof, we have $\tilde{\mu}^{-1}(\dot{y}) \cong \tilde{\gamma}^{-1}(\dot{y})$.  The analogue of 
\eqref{e:resolution} holds:
\begin{equation} \label{e:resolution2}
 K\dot{y}B/B \cap \dot{y}P/B \stackrel{\cong}{\longrightarrow} \tilde{\gamma}^{-1}(\dot{y}), \hspace{0.2in} zB/B \mapsto [z, z^{-1}\dot{y} ].
\end{equation}
Under this isomorphism, the action of $K\cap \dot{y}P \dot{y}^{-1}$ on $\tilde{\gamma}^{-1}(\dot{y})$
corresponds to the action on $K\dot{y}B/B \cap \dot{y}P/B \cong  \mu^{-1}(\dot{y}B/B)$ by left multiplication.  This action
is transitive (since the action of the subgroup $K\cap \dot{y}P_{I \cap \tau(y)} \dot{y}^{-1}$ is).

By Lemma \ref{l:changefiber}, the equality \eqref{e:kernel-deriv} is equivalent to
$\mathrm{Ker}(d \tilde{\mu}_{\tilde{z}})=T_{\tilde{z}} F$,
where $\tilde{F} = \tilde{\mu}^{-1}(\dot{y})$.  Since
the $K \cap \dot{y}P \dot{y}^{-1}$-action on $\tilde{Z}_v$ restricts to a transitive action on $\tilde{F}$,
there exists
$k \in K \cap \dot{y}P \dot{y}^{-1}$ such that $k \cdot \tilde{z}= [\dot{y},1]$.  
The isomorphism $dk_z: T_{\tilde{z} }\tilde{Z}_v \to T_{[\dot{y},1]} \tilde{Z}_v$ takes $T_{\tilde{z} }\tilde{F}$ to 
$T_{[\dot{y},1]} \tilde{F}$, and $\mathrm{Ker}(d\tilde{\mu}_{\tilde{z}})$ to $\mathrm{Ker}(d \tilde{\mu}_{[\dot{y},1]})$.  Therefore, we may assume $\tilde{z} = [\dot{y},1]$,
so $z = [\dot{y}, e]$, where $e$ is the point $B/B$ in $P/B$.

We will prove \eqref{e:kernel-deriv} by finding a subspace $V$ of $T_{[\dot{y},e]} Z_v$ such that
$d\mu_{[\dot{y},e]}|_V$ is injective, and such that $\dim V + \dim T_{[\dot{y},e]} F = \dim T_{[\dot{y},e]} Z_v$.  
This suffices, since $d\mu_{[\dot{y},e]}|_V$ injective forces $V \cap T_{[\dot{y},e]} F  = \{ 0 \}$.  
The dimension equality then implies that $ V \oplus T_{[\dot{y},e]} F =  T_{[\dot{y},e]} Z_v$, so
$\mathrm{Ker}(d\mu_z)=T_{[\dot{y},e]} F$, as desired.

Let $J = I \smallsetminus (I \cap \tau(y))$, and let $v' = v_0*w_J$.  We have a commutative diagram
$$
\begin{CD}
Z_{v'} = G_{v_0} \times^B P_J/B @>{\mu'}>> X_{v'} \\
@VVV @VVV \\
Z_{v} = G_{v_0} \times^B P_I/B @>{\mu}>> X_{v},
\end{CD}
$$
where the vertical maps are inclusions.  Let $V = T_{[\dot{y},e]} Z_{v'} \subset T_{[\dot{y},e]} Z_{v}$.
Since $\dim F = | I \cap \tau(y) |$, we see that $\dim Z_{v'} + \dim F = \dim Z_v$.  Since the varieties
in this equation are smooth, we have $\dim V + \dim T_{[\dot{y},e]} F = \dim T_{[\dot{y},e]} Z_v$. 

To show that $d\mu_{[\dot{y},e]}|_V$ is injective, it suffices to show that $\mu'$ restricts to an isomorphism of
a neighborhood $\mathcal Z$ of $[\dot{y},e]$ in $Z_{v'}$ with a neighborhood of $y$ in $X_{v'}$.  Part (1) of this theorem, applied
to $\mu'$, implies that $(\mu')^{-1}(\dot{y}B/B)$ is a single point, so $(\mu')^{-1}(\dot{y}B/B) = \{ [\dot{y},e] \}$.  Let $\U$ be the subset
of $X_{v'}$ where the fiber dimension is $0$, and let ${\mathcal Z} = (\mu')^{-1}(\U)$.
Since $\mu'$ is proper,
the fiber dimension is upper semi-continuous, so $\U$ is open in $X_{v'}$.
Since $X_{v'}$ and hence $\U$ is normal, and $\mu'$ is birational, Zariski's Connectedness Theorem implies that $\mu'_{\mathcal Z}$ is bijective;
then Zariski's Main Theorem implies that $\mu'_{\mathcal Z}$ is an isomorphism.
\end{proof}

Combining Proposition \ref{p:small} (which describes when the resolution $\mu$ in Theorem \ref{t:fiberhomog} is small)
 with Theorems \ref{t:char-cycle} and \ref{t:fiberhomog},
we obtain the following result.  As discussed in Section \ref{ss:small}, the hypotheses of the theorem can be stated explicitly
in terms of the entries of the clan $v_0$.

\begin{theorem} \label{t:irreducible}
Let $G = GL(n)$ and $K = GL(p) \times GL(q)$.  Let $X_{v_0}$ be a smooth orbit closure in $G/B$, and let $I$ be a set of commuting simple reflections such that 
$\ell(v_0* w_I) = \ell(v_0)+ |I|$ and $\ell(y* w_I) = \ell(y) + |I|$ for all $y \leq v_0$ with $\ell(y) = \ell(v_0) - 1$.  Then for $v=v_0*w_I$, the characteristic
cycle $\CC(\IC_{X_v})$ is irreducible.
\end{theorem}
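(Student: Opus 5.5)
The plan is to apply Theorem \ref{t:char-cycle} to the diagram $Z_v \xrightarrow{\mu} X_v \hookrightarrow G/B$, with $f = \iota\circ\mu$. To do this we must check three things: $Z_v$ and $G/B$ are smooth, $\mu$ is a small resolution, and every fiber of $f$ is smooth and strongly reduced.

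The ambient space $X = G/B$ is smooth. The space $Z_v = G_{v_0}\times^B P_I/B$ is smooth because it is a $P_I/B \cong (\mathbb P^1)^{|I|}$-bundle (Lemma \ref{l:commutingsimple}) over the smooth variety $X_{v_0} = G_{v_0}/B$. Smallness follows from the hypotheses $\ell(v_0*w_I)=\ell(v_0)+|I|$ and $\ell(y*w_I)=\ell(y)+|I|$ for all codimension-one $y\le v_0$: these are exactly the hypotheses of Proposition \ref{p:small}, which gives that $\mu$ is a small resolution.

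For the fiber conditions we invoke Theorem \ref{t:fiberhomog}, whose hypothesis is $I\cap\tau(v_0)=\emptyset$. By Lemma \ref{l:tau2}, the assumption $\ell(v_0*w_I)=\ell(v_0)+|I|$ is equivalent to $I\cap \tau(v_0)=\emptyset$, provided each $s\in I$ straddles two blocks of $v_0$. This proviso is the one point needing care. Since $X_{v_0}$ is smooth, $v_0=(v^1\mid\cdots\mid v^h)$ with $v^k=\max(V_{p_k,q_k})$. Any simple reflection lying inside a single block $v^k$ belongs to $\tau(v^k)$, because $v^k$ is the maximal clan. Hence such an $s$ lies in $\tau(v_0)$, and then $\ell(v_0*w_I) < \ell(v_0)+|I|$, contradicting the hypothesis. (This is the same observation as in the parenthetical remark to hypothesis (b) of Proposition \ref{p:orbitnumber}.) So every $s\in I$ straddles two blocks, Lemma \ref{l:tau2} applies, and $I\cap\tau(v_0)=\emptyset$.

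Theorem \ref{t:fiberhomog} then shows the fibers of $\mu$ are smooth and strongly reduced. Since $\iota$ is a closed embedding, $d\iota$ is injective, so $\ker df_z=\ker d\mu_z$ and the fibers of $f$ coincide with those of $\mu$. The hypotheses of Theorem \ref{t:char-cycle} therefore hold, and that theorem gives that $\CC(\IC_{X_v})$ is trivial, i.e.\ irreducible. The only mild obstacle is the straddling check above; everything else is a direct citation.
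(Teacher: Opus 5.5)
Your proof is correct and follows the paper's argument: smallness from Proposition~\ref{p:small}, smooth and strongly reduced fibers from Theorem~\ref{t:fiberhomog}, and the conclusion from Theorem~\ref{t:char-cycle}. Your explicit check that $I\cap\tau(v_0)=\emptyset$ fills in a step the paper leaves implicit, but the straddling detour is unnecessary: your own observation that $s\in I\cap\tau(v_0)$ gives $v_0*w_I=v_0*w_{I\smallsetminus\{s\}}$, and hence $\ell(v_0*w_I)<\ell(v_0)+|I|$, already works for any such $s$ (this is the implication (3)$\Rightarrow$(1) of Lemma~\ref{l:tau2}, whose proof does not use the straddling hypothesis).
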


\begin{proof}
By \cite[Prop.~5.3.2]{Lar}, under the hypotheses of the theorem, the resolution $\mu: Z_v \to X_v$ is small. The fibers of $\mu$ are smooth and
strongly reduced, by Theorem \ref{t:fiberhomog}. Therefore, by Theorem \ref{t:char-cycle}, $\CC(\IC_{X_v})$ is irreducible.
\end{proof}

Theorem \ref{t:irreducible} will allow us to compute (equivariant) Chern-Mather
classes.  
We begin with some background.  Suppose $Y$ is a $T$-variety.  
The $T$-equivariant
Borel-Moore homology groups of $Y$ are by definition $H^T_*(Y) = H_*(Y_T)$, where
the right hand side denotes the Borel-Moore homology of the mixed space $Y_T = E_T \times^T Y$.
Here $E_T \to B_T$ is an approximation to the universal $T$-bundle,
chosen appropriately corresponding to the degree of the homology group; we
refer to \cite{EdGr98}, \cite{AnFu24} for more details.  We refer to \cite{Jones2010} for
the definition of Chern-Mather classes.
The equivariant Chern-Mather class $c_M^T(Y) \in H^T_*(Y)$ corresponds to the class $c_M(Y_T) \in H_*(Y_T)$
(see \cite[\S4.3]{Ohm06}).
If $Y \subset X$ is a closed embedding of $Y$ into a smooth variety $X$ such that 
$\CC_Y$ is irreducible, and $\mu: Z \to Y$ is a small resolution, then
by \cite[Theorem 1.2.1]{Jones2010}, $c_M(Y) = \mu_*(c(TZ) \cap [Z])$.  
If these varieties have $T$-actions, and the maps are $T$-equivariant, then
applying Jones's result to the mixed spaces yields the equivariant
equation $c_M^T(Y) = \mu_*(c^T(TZ) \cap [Z]_T)$.

\begin{corollary} \label{c:CM}
With hypotheses as in Theorem \ref{t:irreducible} , the $T$-equivariant Chern-Mather class of $X_v$ is given by $c_M^T(X_v) = \mu_* (c^T (T Z_v) \cap [Z_v]_T)$.
\end{corollary}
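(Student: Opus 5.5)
The argument is a direct combination of Theorem \ref{t:irreducible} with Jones's theorem \cite[Theorem 1.2.1]{Jones2010}, applied to the mixed spaces so as to obtain the $T$-equivariant version. First, under the hypotheses of Theorem \ref{t:irreducible}, Proposition \ref{p:small} (that is, \cite[Prop.~5.3.2]{Lar}) shows that $\mu: Z_v = G_{v_0}\times^B P_I/B \to X_v$ is a small resolution. Theorem \ref{t:irreducible} shows that $\CC(\IC_{X_v})$ is irreducible for the closed embedding $X_v \subset X = G/B$. The map $\mu$ is induced by multiplication and is $K$-equivariant, hence in particular $T$-equivariant, where $T$ acts on $Z_v$ through the left action on $G_{v_0}$. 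So in the non-equivariant setting Jones's theorem gives $c_M(X_v) = \mu_*(c(TZ_v)\cap [Z_v])$ immediately.

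For the equivariant statement, I would fix an approximation $E_T \to B_T$ of the universal bundle, with $E_T$ a smooth open subset of a representation, chosen suitably for the homological degree in question. I then form the mixed spaces $(Z_v)_T = E_T\times^T Z_v$, $(X_v)_T$ and $X_T$. The next step is to check that all the hypotheses transfer:
\begin{enumerate}
\item $(X_v)_T \subset X_T$ is a closed embedding into a smooth variety.
\item $(Z_v)_T$ is smooth.
\item $\mu_T: (Z_v)_T\to (X_v)_T$ is proper and birational, and its fibers are those of $\mu$, since $\mu_T$ is locally over $B_T$ the product of $\mu$ with the identity. Hence $\mu_T$ is small with respect to the induced stratification. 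This uses that $\dim B_T$ enters both sides of the smallness inequality equally.
\item The fibers of $\mu_T$ are still smooth and strongly reduced. This follows from the local product structure, by the same argument as in Lemma \ref{l:changefiber}.
\end{enumerate}

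Hence Theorem \ref{t:char-cycle} applies to $(Z_v)_T \to (X_v)_T \subset X_T$. It shows that the characteristic cycle of $\IC_{(X_v)_T}$ is irreducible. Alternatively, one can argue that the characteristic cycle of the pullback of $\IC_{X_v}$ along a smooth morphism is the pullback of $\CC(\IC_{X_v})$. Jones's theorem then gives $c_M((X_v)_T) = (\mu_T)_*(c(T(Z_v)_T)\cap[(Z_v)_T])$.

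The final step is translation back to equivariant classes. By \cite[\S4.3]{Ohm06}, $c_M((X_v)_T)$ is by definition $c_M^T(X_v)$, and $[(Z_v)_T] = [Z_v]_T$. The tangent bundle $T(Z_v)_T$ is an extension of the pullback of $TB_T$ by the equivariant bundle $(TZ_v)_T$. The extra factor $c(TB_T)$ is pulled back from $B_T$, so it commutes with $\mu_*$ by the projection formula. It becomes $1$ after the usual normalization defining equivariant Chern–Mather classes, equivalently in the limit over approximations. This yields $c_M^T(X_v)=\mu_*(c^T(TZ_v)\cap[Z_v]_T)$.

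The main obstacle is only bookkeeping: making sure the smallness and irreducibility hypotheses genuinely survive passage to the mixed spaces, and that the $TB_T$ correction is handled consistently with Ohmoto's definition. Everything else is formal.
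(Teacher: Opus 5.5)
Your proposal is correct and follows the paper's own route: smallness of $\mu$ from \cite[Prop.~5.3.2]{Lar}, irreducibility of $\CC(\IC_{X_v})$ from Theorem~\ref{t:irreducible}, and Jones's theorem applied to the mixed spaces, with your checks that smallness and smooth, strongly reduced fibers survive passage to $(Z_v)_T \to (X_v)_T \subset X_T$ spelling out what the paper compresses into ``applying Jones's result to the mixed spaces.'' One small correction to the bookkeeping: $c_M((X_v)_T)$ is not literally $c_M^T(X_v)$ --- rather $c_M^T(X_v) = c(TB_T)^{-1} \cap c_M((X_v)_T)$ under Ohmoto's normalization (and $c(TB_T)$ does not tend to $1$ over finer approximations, so the ``limit'' remark should be dropped) --- and it is exactly this division, combined with your projection-formula argument, that removes the extra factor.
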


\begin{proof}
Since the resolution $\mu$ is small and $\CC(\IC_{X_v})$ is irreducible, the result follows from the discussion above.
\end{proof}

In Section \ref{ss:Chern-Mather}, we explain how to compute the equivariant pushforward $\mu_* (c^T (T Z_v) \cap [Z_v]_T)$  using localization.  

\section{Alternative presentations and isomorphic fibers} \label{ss:isomorphic}
Our resolution of $X_v$ is of the form 
$$
Z = Z_{v_0, I} = G_{v_0} \times^B P_I/B = K \dot{v}_0 P_{\tau(v_0)} \times^B P_I/B,
$$
In previous sections, we denoted $Z$ by $Z_v$, but in this section we are considering
different presentations of $Z$, so we use the notation $Z_{v_0, I}$.
The main result of this section is the following theorem, which gives alternative presentations of $Z$.

\begin{theorem}\label{t:symmetric-mu}
Assume that the hypotheses of Theorem \ref{t:irreducible}  hold.
Let $\cN$ be a subset of $I$ consisting of $v_0$-noncompact imaginary simple reflections, and
let $I' = I \smallsetminus \cN$.  
Let $v_0' = v_0 * w_{\cN}$, and \(M= \tau(v_0') \cap\tau(v)\).
There is a commutative diagram
\begin{equation}\label{e:symmetric-mu}
\begin{tikzcd}
Z_{v_0, I} = G_{v_0} \times^B P_I/B\arrow[r,"\mu"]\arrow[d,swap,"\varphi"]&\overline{KvB}/B\\
G_{v_0'} \times^{P_M} P_{\tau(v)}/B\arrow[ur,swap,"\mu'"]
\end{tikzcd}
\end{equation}
such that \(\varphi\) is a \(K\)-equivariant isomorphism.  In particular, taking $\cN$ to be empty,
we obtain an isomorphism
\begin{equation}\label{e:symmetric-mu2}
Z_{v_0, I} \cong G_{v_0} \times^{P_M} P_{\tau(v)}/B.
\end{equation}
Moreover, for general $\cN$, we have
\begin{equation}\label{e:symmetric-mu3}
Z_{v_0, I} \cong Z_{v_0', I'}.
\end{equation}
\end{theorem}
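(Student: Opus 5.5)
The plan is to define $\varphi$ as the obvious multiplication-induced map and prove it is a bijective proper morphism between smooth varieties; Zariski's Main Theorem then makes it an isomorphism. The main work is combinatorial: pinning down $\tau(v_0')$, $\tau(v)$ and $M$ in terms of the clans.

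\textbf{Step 1: the $\tau$-invariants.} I would first identify the relevant $\tau$-invariants. Write $v_0=(v^1\mid\cdots\mid v^h)$ with $v^k=\max(V_{p_k,q_k})$. An element $s\in\cN$ straddles two blocks and is $v_0$-noncompact imaginary, so the adjacent entries are opposite signs. These are singleton blocks, because by the small-resolution criteria \eqref{e:badconfig} a sign next to a longer block is excluded. Hence $v_0*s$ replaces $(\pm\mid\mp)$ by $(a\,a)=\max(V_{1,1})$. So $v_0'$ is again of the smooth form, with those blocks merged, and
$$\tau(v_0')=\tau(v_0)\cup\cN .$$
By Lemma \ref{l:tau-para}, $G_{v_0'}=K\dot v_0P_{\tau(v_0)\cup\cN}$ is closed and contains $G_{v_0}=K\dot v_0P_{\tau(v_0)}$.

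Next I would compute $\tau(v)$ directly from the clan $v=v_0*w_I$, using the explicit monoid action of Section \ref{ss:K-orbit-particular}. It contains $\tau(v_0)\cup I$, and possibly more simple reflections adjacent to $I$. From this, $M=\tau(v_0')\cap\tau(v)$ should come out as $\tau(v_0)\cup\cN$, possibly enlarged by reflections already lying in $\tau(v_0)$. The key identity to establish is
$$\dim P_{\tau(v)}/P_M=|I'| .$$
Equivalently, the multiplication $P_M\times^{P_{\tau(v_0)}}P_{\tau(v_0)}P_{I'}\to P_{\tau(v)}$ should be surjective with the right dimension. This would use the commuting, block-straddling nature of $I$ and Lemma \ref{l:commutingsimple}.

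\textbf{Step 2: the map $\varphi$.} Since $B\subset P_M$, $P_I\subset P_{\tau(v)}$ and $G_{v_0}\subset G_{v_0'}$, the map
$$\varphi([g,pB])=[g,pB]$$
is well defined and $K$-equivariant, and $\mu'\circ\varphi=\mu$ by construction. Both varieties are smooth. The source is a bundle over $G_{v_0}/B$ with fiber $(\mathbb P^1)^{|I|}$. The target $G_{v_0'}\times^{P_M}P_{\tau(v)}/B$ is a fiber bundle with smooth fiber over the smooth closed variety $G_{v_0'}/P_M$.

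By Step 1 the two sides have the same dimension, $\ell(v_0)+|I|$. The map $\varphi$ is proper, since the source is proper over $X_v$ and $\mu'$ is separated. It is birational, since $\mu$ is birational and $\mu'$ is generically finite of degree one by the same dimension count.

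\textbf{Step 3: injectivity.} I would show $\varphi$ is injective using the fiber description of Theorem \ref{t:fiberhomog}. By Proposition \ref{p:fiber-general} applied to both presentations, it suffices to treat fibers over good base points $\dot yB/B$ with $y\le v_0$. There $\mu^{-1}(\dot yB/B)\cong\dot yP_{I\cap\tau(y)}/B$. I would check that $\varphi$ maps this fiber isomorphically onto the corresponding fiber of $\mu'$. The latter is $K\dot yB/B\cap\dot yP_{\tau(v)}/B$ intersected with the image, and Proposition \ref{p:orbitnumber} controls which $K$-orbits can occur.

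Then $\varphi$ is a proper birational map with finite, indeed single-point, fibers onto a normal variety, so Zariski's Main Theorem gives \eqref{e:symmetric-mu}. Taking $\cN=\emptyset$ gives \eqref{e:symmetric-mu2}.

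\textbf{Step 4: the last isomorphism.} For \eqref{e:symmetric-mu3}, I would note that $v_0'$ is smooth and that $v_0'*w_{I'}=v$, since the elements of $I$ commute. One checks that the pair $(v_0',I')$ again satisfies the hypotheses of Theorem \ref{t:irreducible}. Then the case $\cN=\emptyset$ applied to $(v_0',I')$ gives $Z_{v_0',I'}\cong G_{v_0'}\times^{P_{M'}}P_{\tau(v)}/B$ with $M'=\tau(v_0')\cap\tau(v)=M$. Composing with \eqref{e:symmetric-mu} yields $Z_{v_0,I}\cong Z_{v_0',I'}$.

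\textbf{Main obstacle.} I expect the hard part to be the clan bookkeeping in Step 1: determining $\tau(v)$ exactly and proving $\dim P_{\tau(v)}/P_M=|I'|$. I would attack this by a case analysis over the four allowed configurations at each straddling $s_i\in I$, which are the complements of \eqref{e:badconfig}.
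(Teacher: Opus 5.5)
Your overall architecture (show $\varphi$ is proper, birational and injective onto a smooth target, then apply Zariski's Main Theorem) can work. However, Step~1 rests on false combinatorics, and Step~3 is missing its key idea.

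\textbf{Step 1.} The combinatorics run in the wrong direction. It is false that $\tau(v)\supseteq\tau(v_0)\cup I$. The correct statement is Lemma~\ref{l:tau}: every element of $\tau(v)$ commutes with every element of $I$, and $\tau(v)=(\tau(v)\cap\tau(v_0))\sqcup I$. So the reflections of $\tau(v_0)$ adjacent to $I$ are \emph{lost}, not gained.

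In the paper's $GL(8)$ example, $\tau(v_0)=\{s_1,s_2,s_4,s_6,s_7\}$, $\tau(v)=\{s_1,s_3,s_5,s_7\}$, and $M=\{s_1,s_7\}\subsetneq\tau(v_0)$. Under your picture, $\dim P_{\tau(v)}/P_M$ would be far larger than $|I|$.

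Likewise $\tau(v_0')=\tau(v_0)\cup\cN$ fails. Take $K=GL(1)\times GL(2)$, $v_0=(+--)$, $I=\cN=\{s_1\}$; the hypotheses hold vacuously. Then $\tau(v_0)=\{s_2\}$, while $v_0'=(11-)$ has $\tau(v_0')=\{s_1\}$. Hence $K\dot v_0P_{\tau(v_0)\cup\cN}=G\neq G_{v_0'}$. Also, \eqref{e:badconfig} does not force the blocks next to $s\in\cN$ to be singletons; they are all-sign blocks.

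The identity $\dim P_{\tau(v)}/P_M=|I'|$ is nevertheless true. The reason is Lemma~\ref{l:tau} applied to $(v_0',I')$, which gives $\tau(v)=M\sqcup I'$ with $I'$ commuting with $M$. That lemma is where the excluded configurations \eqref{e:badconfig} are used; $I\cap\tau(v_0)=\emptyset$ alone does not suffice.

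\textbf{Step 3.} This is where the argument is actually missing. Your model of the $\mu'$-fiber is wrong: by Proposition~\ref{p:fiber-general}(1) it is $G_{v_0'}/P_M\cap\dot yP_{\tau(v)}/P_M\subset G/P_M$, not a subset of $G/B$. Proposition~\ref{p:orbitnumber} concerns $P_I$ and says nothing about whether distinct points get identified when $\times^B$ is replaced by $\times^{P_M}$.

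The missing idea is short:
\begin{itemize}
\item If $\varphi[g_1,p_1B]=\varphi[g_2,p_2B]$, then $g_2=g_1r$ with $r\in P_M$ and $rp_2B=p_1B$. Hence $r\in P_M\cap P_I=P_{M\cap I}$.
\item $M\cap I=\cN$, because $\cN\subseteq\tau(v_0')$ while $I'\cap\tau(v_0')=\emptyset$ (by the length condition, cf.\ Lemma~\ref{l:tau2}).
\item For $\cN=\emptyset$ this gives $r\in B$, hence injectivity.
\item For $\cN\neq\emptyset$ you also need that $g,gr\in G_{v_0}$ with $r\in P_{\cN}$ forces $r\in B$. This follows from Theorem~\ref{t:fiberhomog}(1) for the pair $(v_0,\cN)$. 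For every $y\le v_0$, the entries in positions $i,i+1$ with $s_i\in\cN$ are the same opposite signs as in $v_0$ (Lemma~\ref{l:concatenation}, since the adjacent blocks are all-sign blocks). So $\cN\cap\tau(y)=\emptyset$ and $G_{v_0}\cap gP_{\cN}=gB$.
\end{itemize}

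\textbf{Step 2.} Birationality also needs dominance of $\varphi$. You have $\dim\varphi(Z)\ge\dim X_v$, which equals the dimension of the irreducible target; after that, degrees multiply.

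\textbf{Comparison with the paper.} With these repairs, your route handles all $\cN$ at once and avoids both Lemma~\ref{l:fiber-small} and equivariant simple connectivity. The paper instead:
\begin{itemize}
\item proves surjectivity of $\varphi$ by the dimension count;
\item gets finiteness from equal fiber dimensions of the two small resolutions $\mu,\mu'$;
\item gets birationality from $K$-equivariance over the open orbit;
\item treats $\cN$ by induction on $|\cN|$.
\end{itemize}
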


The hypotheses on $\cN$ mean that if $s_i \in \cN$, then the entries of
$v_0$ (resp.~$v_0'$) in positions $i$ and $i+1$ are opposite signs (resp.~equal numbers).
Moreover, as discussed in the proof
of the theorem, $X_{v_0'}$ is smooth, and the pair $v_0', I' $ satisfies the
hypotheses of Theorem \ref{t:irreducible}.
We remark that $G_{v_0'} = K \dot{v}_0' P_{\tau(v_0')} = K \dot{v}_0 P_{\tau(v_0')}$ by 
Proposition \ref{p:imag-noncompact} below.

Note that our original resolution of $X_v$
corresponds to a resolution
of $G_v = \overline{K \dot{v} B}$ of the form $\overline{K \dot{v}_0 B} \times^B P_I$.  This expression makes it obvious that $G_v$
is right $P_I$-invariant.  However, $G_v$ is in fact right invariant under the action of the larger group $P_{\tau(v)}$,
which motivates writing $Z$ in this new form.

If $y \leq v$, let $F_y = \mu^{-1} (x B/B)$
where $xB \in K \dot{y}B/B$.  Since $\mu$ is $K$-equivariant, $F_y$ depends up to isomorphism only on the
orbit $K \dot{y} B/B$.  Part (2) of the next corollary reduces the number of $F_y$ which need to be considered
when analyzing the fibers of $\mu$.  The original presentation of the resolution would only yield (2)
for $w \in W_I$.

\begin{corollary} \label{c:symmetric-mu}

$(1)$ For $p \in P_{\tau(v)}$, we have 
\begin{equation} \label{e:isom}
\mu^{-1}(\dot{y}B/B) = \mu^{-1}(\dot{y} p B/B),
\end{equation}

$(2)$ For $w \in W_{\tau(v)}$ and $y \leq v$, we have $F_y \cong F_{y * w}$.
\end{corollary}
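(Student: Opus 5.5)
Both parts come from rewriting $Z_{v_0,I}$ through the isomorphism $\varphi$ of Theorem \ref{t:symmetric-mu}. Take $\cN$ empty and use \eqref{e:symmetric-mu2}. This gives $\mu = \mu'\circ\varphi$ with
\[
\mu' \colon G_{v_0}\times^{P_M} P_{\tau(v)}/B \to X_v ,
\]
the multiplication map, and $\varphi$ a $K$-equivariant isomorphism. Since $\varphi$ is an isomorphism, $\mu^{-1}(x)$ and $(\mu')^{-1}(x)$ correspond for every $x\in X_v$, so it suffices to prove both statements for $\mu'$.

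For (1), I will apply Proposition \ref{p:fiber-general}(2) to $\mu'$, with $M = G_{v_0}$ (right $P_M$-invariant, as $P_M\subseteq P_{\tau(v_0)}$ when $M\subseteq\tau(v_0)$, and in general since $G_{v_0}$ is right $P_{\tau(v_0)}$-invariant), $R = P_M$, $P = P_{\tau(v)}$, and $B\subset P$. The hypothesis to check is $G_{v_0}P_{\tau(v)} = G_v$. Indeed, $G_{v_0}P_I = G_v$, and $G_v$ is right $P_{\tau(v)}$-invariant (Section \ref{ss:K-orb}), so $G_{v_0}P_{\tau(v)}\subseteq G_v P_{\tau(v)} = G_v$; the reverse inclusion holds because $I\subseteq\tau(v)$. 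Proposition \ref{p:fiber-general}(2) then gives $(\mu')^{-1}(\dot y B/B)\cong(\mu')^{-1}(\dot y pB/B)$ for $\dot y\in G_v$ and $p\in P_{\tau(v)}$.

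The main obstacle is that \eqref{e:isom} is written as an equality, not an isomorphism. I will read it as the canonical identification \eqref{e:comp-iso} through $\tilde\mu^{-1}(\dot y)$, given by right multiplication by $p$; as the Remark after Proposition \ref{p:fiber-general} warns, this depends on the representative $\dot y$ and not only on the coset $\dot yB/B$. If a literal equality of subsets is intended, it fails in general, so an isomorphism is the most that can be proved.

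For (2), it suffices to treat a simple reflection $s\in\tau(v)$ and then induct on a reduced word for $w$, noting that $y*s\le v*s = v$. If $s\in\tau(y)$, then $y*s=y$ and there is nothing to prove. Otherwise $K\dot{(y*s)}B$ is the open $K\times B$-orbit in $K\dot yP_s$. Choose a representative $\dot{(y*s)} = \dot y p$ with $p\in P_s\subseteq P_{\tau(v)}$; such a $p$ exists because the open orbit meets $\dot yP_s$ and everything is left $K$-stable, so after translating by $K$ we may take the representative in $\dot yP_s$. Part (1) then gives $F_y\cong F_{y*s}$, using that $F_y$ depends only on the $K$-orbit because $\mu$ is $K$-equivariant. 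The point that must be checked carefully is that $\dot y\in G_v$, so that part (1) applies; this holds since $y\le v$.
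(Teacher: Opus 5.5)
Your proof is correct and follows the same route as the paper. Part (1) comes from applying Proposition \ref{p:fiber-general}(2) to the presentation $G_{v_0}\times^{P_M}P_{\tau(v)}/B$ of Theorem \ref{t:symmetric-mu}. Part (2) reduces to a simple reflection $s_\alpha\in\tau(v)$: you choose a representative $\dot y p$, with $p\in P_\alpha$, of the open $K$-orbit in $K\dot yP_\alpha$, and then apply (1).

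Your additional checks fill in details the paper leaves implicit: that $G_{v_0}P_{\tau(v)}=G_v$, that $y*s\le v$, and that $\dot y\in G_v$. Your reading of \eqref{e:isom} as an isomorphism, not a literal equality of subsets, is also right. The paper's proof deduces it from the isomorphism in Proposition \ref{p:fiber-general}(2), and fibers over distinct points are disjoint, so equality could not be meant.
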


\begin{proof}
 Part (1) follows from Propositions
\ref{p:fiber-general} and \ref{t:symmetric-mu}.  For (2), it is enough to show
$F_y \cong F_{z}$ for $s_{\alpha} \in \tau(v)$ and $z = y s_{\alpha} $.  By definition,
the orbit $K \dot{z} B/B$ is the open $K$-orbit in $K \dot{y} P_{\ga}/B$, so
$\dot{z} = \dot{y} p$ for some $p \in P_{\ga} \subset P_{\tau(v)}$.  Hence 
$F_y \cong F_z$ by (1).
\end{proof}

To prove Theorem \ref{t:symmetric-mu}, we need the following lemmas.

\begin{lemma} \label{l:tau}
Assume that the hypotheses of Theorem \ref{t:irreducible}  hold.  Then 
every element of $\tau(v)$
commutes with every element of $I$, and
\begin{equation} \label{e:tau}
\tau(v) = (\tau(v) \cap \tau(v_0)) \sqcup I.
\end{equation}
\end{lemma}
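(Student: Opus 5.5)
The plan is to work entirely with clans, using the explicit description of the monoid action from Section \ref{ss:K-orbit-particular}. Write $v_0 = (v^1 \mid \cdots \mid v^h)$ with $v^k = \max(V_{p_k,q_k})$. By Lemma \ref{l:tau2}, each $s_i \in I$ straddles two blocks and $I \cap \tau(v_0) = \emptyset$. The clan $v = v_0 * w_I$ agrees with $v_0$ except in positions $i, i+1$ for $s_i \in I$. Because $I$ consists of commuting reflections, these position pairs are disjoint. At each such pair, $v$ either interchanges the two entries of $v_0$ (complex ascent) or replaces a pair of opposite signs by a pair of equal new integers (noncompact imaginary).

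\emph{Step 1: $I \subseteq \tau(v)$.} This is immediate: $v * s = v_0 * w_I * s = v_0 * w_I = v$ for $s \in I$, since $w_I s < w_I$.

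\emph{Step 2: elements of $\tau(v)$ are not adjacent to elements of $I$.} Let $s_j \in \tau(v)$ with $s_j \notin I$. Suppose $|j-i| = 1$ for some $s_i \in I$; by symmetry take $j = i+1$. We must show that $s_j \notin \tau(v)$, using the configuration hypothesis \eqref{e:badconfig}, which is the smallness condition of Proposition \ref{p:small}. This is the main obstacle, and I would do a case check on the last entries of block $k$ and the first entries of block $k+1$ around the bar at $i$.

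\begin{itemize}
\item Since $v^{k+1} = \max(V_{p_{k+1},q_{k+1}})$, the entry $a_{i+2}$ of $v_0$ is either a number whose mate lies later, or a sign. In the sign case, block $k+1$ has the form $\epsilon$ or $\epsilon\epsilon\cdots$ only when it consists entirely of equal signs.
\item If block $k+1$ has length $1$, then $s_{i+1}$ straddles blocks, and the situation reduces to analysing the pair at positions $i+1, i+2$ of $v$.
\item In the generic case, after the swap the entry $a_i$ of $v_0$ sits in position $i+1$ of $v$. One checks in each case (numbers with mates on the appropriate side, or signs) that $(v_{i+1}, v_{i+2})$ is a complex ascent or a noncompact imaginary pair, so $s_{i+1} \notin \tau(v)$.
\item The excluded configurations $\epsilon \mid aa$, $aa \mid \epsilon$, $\epsilon \mid a\epsilon$, $\epsilon a \mid \epsilon$ are exactly the cases where the new adjacent pair becomes a descent or a compact pair. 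The hypothesis rules these out.
\end{itemize}

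I expect this enumeration to be the bulk of the work, mirroring Larson's analysis.

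\emph{Step 3: the remaining elements of $\tau(v)$ lie in $\tau(v_0)$.} For $s_j \in \tau(v)$ with $s_j \notin I$, Step 2 shows that positions $j, j+1$ are disjoint from, and nonadjacent to, all pairs moved by $I$. Hence the entries of $v$ at $j, j+1$ coincide with those of $v_0$. Moreover, the relative position of their mates (before or after) is unchanged: the only mates that move are within pairs $\{i, i+1\}$ not containing $j$ or $j+1$, and new equal integers only replace signs. Therefore the $v$-type of $s_j$ equals its $v_0$-type, which gives $s_j \in \tau(v_0)$.

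\emph{Step 4: conclusion.} Conversely, $\tau(v) \cap \tau(v_0)$ is disjoint from $I$ because $I \cap \tau(v_0) = \emptyset$. Combining Steps 1–3 yields \eqref{e:tau}, and Step 2 gives the commutation statement.
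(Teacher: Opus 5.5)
Your skeleton matches the paper's proof. Step 1 and Steps 3--4 reproduce the paper's last paragraph: $I\subseteq\tau(v)$ because $v*w_I=v$, and if $s_{j\pm1}\notin I$, then $v$ and $v_0$ have the same entries at $j,j+1$, with mates on the same sides, so $s_j\in\tau(v_0)$. The gap is Step 2. It carries essentially all of the lemma's content: it is the only place the excluded configurations \eqref{e:badconfig} are used. You only announce it (``one checks in each case''), and the sketch you give would not go through as written.

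Concretely, there are three problems.

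(i) Your first bullet asserts that $a_{i+2}$ is a sign or a number whose mate lies later. This fails when block $k+1$ is $(bb)$, where $a_{i+2}=b$ has its mate at $i+1$. That is exactly the delicate case. If $a_i=\epsilon$, you get $v=b\mid \epsilon\, b$, and $(\epsilon,b)$ is a complex descent; this is why $\epsilon\mid aa$ is excluded. If $a_i=a$ is a number, you get $v=b\mid a\,b$, and you must compare the mate of $a$ (left of $i$) with the mate of $b$ (now at $i$) to see a complex ascent.

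(ii) You only describe the swap. When $(a_i,a_{i+1})$ are opposite signs, $v_{i+1}$ is a new integer whose mate is at $i$, and this needs its own (easy) check.

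(iii) ``By symmetry take $j=i+1$'' does not rule out $s_{j+1}\in I$ as well. In that case $(a_j,a_{j+1})$ is a whole block of length $2$, $v_{j+1}\neq a_{j+1}$, and your analysis in terms of $a_{i+2}$ does not apply. The paper treats this as a separate case:
\begin{itemize}
\item If $a_{j-1}$ (resp.~$a_{j+2}$) is a number, then $v_j$ has its mate to the left (resp.~$v_{j+1}$ has its mate to the right), giving a complex ascent.
\item If both are signs, the excluded configurations force $a_j,a_{j+1}$ to be equal signs. Then $v_{j-1}=v_j$ and $v_{j+1}=v_{j+2}$ are new integers, and $s_j$ is again a complex ascent.
\end{itemize}
Without the hypothesis this case genuinely fails: $\epsilon\mid aa\mid\epsilon$ gives $v=a\,\epsilon\,\epsilon\,a$, with $s_j$ compact imaginary.

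To complete Step 2, check the four possible types of $(a_{j-1},a_j)$ when a single neighbor lies in $I$ (two numbers, sign--number, number--sign, opposite signs), and then handle this two-neighbor case.
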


\begin{proof}
Suppose $s_{k} \in \tau(v)$.  We show that $s_{k}$ commutes
with every element of $I$.  This holds if $s_{k} \in I$, since
by hypothesis, $I$ consists of a set of commuting reflections.  Hence
we may assume $s_{k} \not\in I$.  We must show that neither $s_{{k-1}}$ nor $s_{{k+1}}$
is in $I$.  

We prove this by contradiction.  First suppose $s_{{k-1}} $ is in $I$ but $s_{{k+1}}$ is
not.  Write
$$
v_0 = a_{k-1} | a_k a_{k+1}, \hspace{.2in} v = b_{k-1} | b_k a_{k+1},
$$
with the understanding that these equations only record the entries of the clans in positions
$k-1, k, k+1$, and the bar is a reminder that $a_{k-1}$ is the at end of a block of $v_0$, 
and $a_k$ is the beginning of a block.  By hypothesis,  $v_0 * s_{{k-1}} > v_0$, so the discussion of
Section \ref{ss:K-orbit-particular} implies that  $(a_{k-1}, a_k)$ is of the form
$(a,b)$, $(\gre, a)$, $(a, \gre)$, or $(\gre, \gre')$, where $a$ and $b$ are unequal natural numbers
and $\gre, \gre'$ are opposite signs.,
In the first case, 
$b_{k-1} | b_k a_{k+1} = b | a b_{k+1}$, and
the mate of $b_k = a$ is to its left in $v$, so $s_{k} \not\in \tau(v)$, which is a contradiction.
In the second case, $v_0 = \gre | a a_{k+1}$ and $v = a | \gre a_{k+1}$ .  Since $\mu$ is a small resolution, the discussion
in Section \ref{ss:small} implies that $a_{k+1}$ cannot equal $\gre$ or $a$; hence $s_{k} \not\in \tau(v)$.
 In the third case, $v_0 = a | \gre a_{k+1}$ and $v = \gre | a a_{k+1}$, so $s_{k} \not\in \tau(v)$.
In the last case, $v_0 = \gre | \gre' a$ and $v = b | b a$, so $s_{k} \not\in \tau(v)$.  Hence, in all four cases,
we deduce that $s_{k} \not\in \tau(v)$, which is a contradiction.  We conclude that
$s_{{k-1}} \in I$, $s_{{k+1}} \not\in I$ cannot occur.

The proof that $s_{{k-1}} \not\in I$, $s_{{k+1}} \in I$ cannot occur is similar and is omitted.
Finally, suppose that both $s_{{k-1}}$ and $s_{{k+1}}$
are in $I$.   Write $v_0 = a_{k-1} | a_k a_{k+1} | a_{k+2}$ and
$v = b_{k-1} | b_k b_{k+1} | b_{k+2}$.  If $a_{k-1}$ is a number then $b_k$ is a number whose mate
is to its left; if $a_{k+2}$ is a number then $b_{k+1}$ is a number whose mate is to its right.
In either case, $s_{k} \not\in \tau(v)$, which is a contradiction.  Hence $a_{k-1}$ and $a_{k+2}$ are signs.
Since the hypotheses of Theorem \ref{t:irreducible} hold, the patterns of \eqref{e:badconfig} do not occur in $v_0$.
Thus, $a_k$ and $a_{k+1}$ are also signs, so $b_{k-1} = b_k$ and $b_{k+1} = b_{k+2}$ are numbers,
and $s_{k} \not\in \tau(v)$, which is a contradiction.  Hence we cannot have both  $s_{{k-1}}$ and $s_{{k+1}}$ in $I$.
We conclude that that neither $s_{{k-1}}$ nor $s_{{k+1}}$
is in $I$, as desired.

We now prove \eqref{e:tau}.  Since $v = v_0 * w_I$, we have $v*w_I = v$ so $\tau(v) \supseteq I$. 
It suffices to show that if $s_{k} \in \tau(v) \smallsetminus I$, then $s_{k} \in \tau(v_0)$.
By what we have already proved, neither $s_{k-1}$ nor $s_{k+1}$ is in $I$, so the $k, k+1$ entries
of $v_0$ and $v$ coincide.  If either is a sign, then these entries are both signs and are equal,
so $s_{k} \in \tau(v_0)$.  If the entries are both numbers, then the relative positions of their
mates are the same in $v$ and in $v_0$.  Hence $s_{k} \in \tau(v)$ implies $s_{k} \in \tau(v_0)$.
\end{proof}

The following lemma is well-known.

\begin{lemma} \label{l:fiber-small}
Let $\mu_1: Z_1 \to Y$ and $\mu_2: Z_2 \to Y$ be small resolutions of $Y$.  Then for any $y \in Y$,
$H^*(\pi_1^{-1}(y)) = H^*(\pi_2^{-1}(y))$.  Hence $\dim \pi_1^{-1}(y) = \dim \pi_2^{-1}(y)$.
\end{lemma}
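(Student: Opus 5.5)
The plan is to use the decomposition theorem together with the defining property of a small resolution. (The statement writes $\pi_i^{-1}(y)$; I read $\pi_i$ as $\mu_i$.) Since $\mu_1$ and $\mu_2$ are both small resolutions of the same variety $Y$, we have, as in the proof of Theorem \ref{t:char-cycle},
\[
R\mu_{1*}\underline{\mathbb{Q}}_{Z_1}\cong \IC_Y \cong R\mu_{2*}\underline{\mathbb{Q}}_{Z_2},
\]
up to the same shift by $\dim Y$. This identification comes from the characterization of $\IC_Y$ by its support conditions. The pushforward of a constant sheaf under a proper map from a smooth variety is self-dual. Smallness is exactly the statement that the strict support conditions hold, and generically the pushforward is the constant sheaf. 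So no appeal to the full decomposition theorem is needed, only the Goresky–MacPherson characterization.

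Next I take stalks at a point $y\in Y$. By proper base change,
\[
\mathcal{H}^i(R\mu_{j*}\underline{\mathbb{Q}}_{Z_j})_y\cong H^i(\mu_j^{-1}(y);\mathbb{Q}), \qquad j=1,2.
\]
Comparing stalk cohomology of the two isomorphic complexes gives $H^*(\mu_1^{-1}(y))\cong H^*(\mu_2^{-1}(y))$ degree by degree, with rational coefficients. This is the meaning of the equality in the statement.

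For the dimension statement, I use that for a complex projective (or compact) variety $F$ of dimension $d$, the group $H^{2d}(F;\mathbb{Q})$ is nonzero, since it has a basis indexed by the $d$-dimensional irreducible components. Also $H^i(F;\mathbb{Q})=0$ for $i>2d$. Hence $\dim F=\tfrac12\max\{i : H^i(F;\mathbb{Q})\neq 0\}$ is determined by the cohomology. The fibers of the proper maps $\mu_j$ are complete, so this applies and the dimensions agree.

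The only step needing care is the identification $R\mu_*\underline{\mathbb{Q}}_Z\cong\IC_Y$ for a small resolution. I would cite it as the standard Goresky–MacPherson result, already used above, rather than reprove it.
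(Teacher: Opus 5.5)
Your proposal is correct and is essentially the paper's argument, which consists only of the observation that $R\mu_{j*}\underline{\mathbb{Q}}_{Z_j}$ is the intersection cohomology complex of $Y$ for a small resolution (you are right to read $\pi_j$ as $\mu_j$). You make explicit the two steps the paper leaves implicit: taking stalks via proper base change, and recovering $\dim \mu_j^{-1}(y)$ as half the top degree in which $H^*(\mu_j^{-1}(y);\mathbb{Q})$ is nonzero. Both steps are carried out correctly.
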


\begin{proof}
This follows because the direct image of the constant
sheaf under a small resolution is the intersection cohomology complex of the target.
\end{proof}

\begin{proof}[Proof of Theorem \ref{t:symmetric-mu}]
We first show that $\varphi$ is an isomorphism under the assumption that $\cN$ is empty.  
Since $G_{v_0} = K \dot{v_0} P_{\tau(v_0)}$  is right-invariant under $P_M \subset P_{\tau(v_0)}$,
and $I \subset \tau(v)$, there is a morphism $\varphi: Z = G_{v_0} \times^B P_I/B \to G_{v_0} \times^{P_M} P_{\tau(v)}/B$
given by \(\varphi[g,p]=[g,p]\).  This morphism is proper since both source and target are complete.
We claim that to show that $\varphi$ is an isomorphism, it is enough to show that $\varphi$ is surjective.
Indeed, suppose $\varphi$ is surjective.  Then $\mu'$ is also a small resolution. By Lemma \ref{l:fiber-small},
over any given point in $X_v$, the fiber of $\mu$ and the fiber of $\mu'$ have the same dimension, 
so $\varphi$ is a finite morphism.
Since $\varphi$ is $K$-equivariant, it must be a covering
over the open $K$-orbit $K \dot{v} B/B$ in $X_v$.  Since these $K$-orbits are equivariantly simply connected, this implies $\varphi$ is an isomorphism over
this $K$-orbit, so $\varphi$ is birational.  Since both source and target of $\varphi$ are smooth, hence
normal, Zariski's connectedness theorem implies that the fibers of $\varphi$ are connected; then Zariski's Main Theorem implies that
$\varphi$ is an isomorphism.  This proves the claim.

Since $\mu = \mu' \circ \varphi$ and $\mu$ is birational, we have $\dim \varphi(Z) = \dim Z$.  On the
other hand,
\begin{equation} \label{e:dimension}
\dim K \dot{v}_0P_{\tau(v_0)}\times^{P_M} P_{\tau(v)}/B = \ell(v_0) + \ell(w_{\tau(v)}) - \ell(w_M) = \ell(v_0) + | I | = \dim Z.
\end{equation}
where the last equality follows from Lemma \ref{l:tau}.  Hence $\varphi$ is dominant; since $\varphi$ is proper
and $K\dot{v}_0P_{\tau(v_0)}\times^{P_M} P_{\tau(v)}/B$ is irreducible, we conclude that $\varphi$ is surjective.
This proves that $\varphi$ is an isomorphism in case $\cN$ is empty; in particular, it proves that  \eqref{e:symmetric-mu2} is an isomorphism.

We now consider the general case.  By induction on $|\cN|$, it suffices to prove the result in case $\cN = \{ s_{i} \}$,
so $v_0' = v_0 * w_{\cN} = v_0 * s_{i}$.  As noted before the proof, the only positions where
the clan of $v_0$ differs from that of $v_0'$ are in positions $i, i+1$; in $v_0$, the entries are opposite signs, and
in $v_0'$ (and in $v$), they are equal numbers.  The clan $v_0'$ has a block decomposition where
the blocks are of the form $\max(V_{p_j,q_j})$ (since $v_0$ does), so $X_{v_0'}$ is smooth.  
Since the pair $v_0, I$ avoids the four configurations of  \eqref{e:badconfig}, so does the pair $v_0', I' = I \smallsetminus \cN$,
so this pair satisfies the hypotheses of Theorem \ref{t:irreducible}.  Since $v_0\leq v_0'$,
we have $K \dot{v}_0 P_{\tau(v_0)} = \overline{K \dot{v}_0B} \subset\overline{K{v_{0}'}B} = K v_0' P_{\tau(v_0')}$.

We claim that $\tau(v_0) \cap \tau(v) \subseteq \tau(v_0') \cap \tau(v)$. Indeed, the only positions where
the clan of $v_0$ differs from that of $v_0'$ are in positions $i, i+1$, so
the only possible roots which are in $\tau(v_0)$ but not in $\tau(v_0')$
are $\ga_{i-1}$ and $\ga_{i+1}$.  Neither of these roots is in $\tau(v)$; the claim follows.

Let $M_0 = \tau(v_0) \cap \tau(v)$ and $M = \tau(v_0') \cap \tau(v)$.  We have
$$
\varphi:  Z_{v_0, I} \cong K \dot{v}_0 P_{\tau(v_0)} \times^{P_{M_0}} P_{\tau(v)}/B \to K \dot{v}_0' P_{\tau(v_0')} \times^{P_{M}} P_{\tau(v)}/B \cong Z_{v_0', I'},
$$
where the two congruences follow from the first part of the proof, and the existence of the map
in the middle follows from the discussion above.
Both $v_0, I$ and $v_0', I'$ satisfy the hypotheses of Theorem \ref{t:irreducible}, so by the first part of the proof,
both source and target of $\varphi$ are small resolutions of $X_v$.  Moreover, $\varphi$
commutes with the maps to $X_v$.   Both source and target of $\varphi$
are smooth, complete and irreducible, and the maps to $X_v$ are birational.
Hence, reasoning as in the first part of the proof
shows that $\varphi$ is an isomorphism.  

Finally, we prove that \eqref{e:symmetric-mu3} is an isomorphism.  Applying equation \eqref{e:symmetric-mu2}
to $v_0', I'$ implies
that the target of $\varphi$ in Theorem \ref{t:symmetric-mu}
is isomorphic to $Z_{v_0', I'}$.  Hence $\varphi$ gives an isomorphism
$Z_{v_0,I} \to Z_{v_0', I'}$, as desired. 
\end{proof}

\begin{remark}\label{r:symmetric-mu}
By abuse of notation, because $\varphi$ is an isomorphism, we sometimes write $\mu$ for the morphism denoted by
$\mu'$ in Theorem \ref{t:symmetric-mu}.
\end{remark}

\begin{proposition} \label{p:imag-noncompact}
Suppose $v_0$ corresponds to a smooth orbit closure, $\ga_i$ is a $v_0$-noncompact imaginary simple root,
and the hypotheses of Theorem \ref{t:irreducible}  hold for $I = \{ s_i \}$. 
Set $v_0' = v_0 * s_{i}$.  Then
\begin{equation} \label{e:para-equal}
K \dot{v}_0 P_{\tau(v_0')} = K \dot{v}_0' P_{\tau(v_0')}.
\end{equation}
\end{proposition}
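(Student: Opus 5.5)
We show both sides are the same closed subset of $G$ by proving the two inclusions.

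For $\subseteq$: by definition of the monoid action, $K\dot{v}_0'B$ is the open $K\times B$-orbit in $K\dot{v}_0P_i$. So $\dot{v}_0' = k\dot{v}_0 p$ for some $k\in K$ and $p\in P_i$. The entries of $v_0'$ in positions $i,i+1$ are equal numbers, so $v_0' * s_i = v_0'$ and $s_i\in\tau(v_0')$. Hence $P_i\subseteq P_{\tau(v_0')}$, and therefore
\[K\dot{v}_0'P_{\tau(v_0')} = K\dot{v}_0 p P_{\tau(v_0')} = K\dot{v}_0P_{\tau(v_0')}.\]
This computation actually gives equality of the two sets directly, since $p$ is absorbed into $P_{\tau(v_0')}$. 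The only input needed is $s_i\in\tau(v_0')$.

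So the main point is to check that $s_i\in\tau(v_0')$, and to make sure $\dot{v}_0'$ may be taken of the form $k\dot{v}_0p$ for the chosen representatives. The representative $\dot{v}_0'$ is only determined up to $K\times B$. Any element of the open orbit $K\dot{v}_0' B\subseteq K\dot{v}_0P_i$ lies in $K\dot{v}_0P_i$, so every representative has this form. The identity $s_i\in\tau(v_0')$ follows from $v_0'*s_i = (v_0*s_i)*s_i = v_0*(s_i*s_i) = v_0*s_i = v_0'$, using the monoid relation $s*s=s$.

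It is also worth recording how this fits with Lemma \ref{l:tau-para}. The clan $v_0'$ has blocks of the form $\max(V_{p_j,q_j})$, as noted in the proof of Theorem \ref{t:symmetric-mu}, so $X_{v_0'}$ is smooth. Lemma \ref{l:tau-para} then gives $G_{v_0'} = K\dot{v}_0'P_{\tau(v_0')}$, so both sides of \eqref{e:para-equal} equal $G_{v_0'}$. Nothing here is a real obstacle; the argument is essentially formal.
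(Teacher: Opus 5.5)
Your argument is correct, and it takes a more direct route than the paper's. You use only two facts. First, $\dot{v}_0' \in K\dot{v}_0P_i$, because $K\dot{v}_0'B$ is the open orbit in $K\dot{v}_0P_i$. Second, $s_i \in \tau(v_0')$, from $s_i * s_i = s_i$. Then \eqref{e:para-equal} is just the fact that two $K \times P_{\tau(v_0')}$ double cosets sharing the point $\dot{v}_0'$ coincide.

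The paper instead invokes Lemma~\ref{l:tau}, whose proof relies on excluding the configurations \eqref{e:badconfig}, i.e.\ on the hypotheses of Theorem~\ref{t:irreducible}. It uses this to factor $w_{\tau(v_0')} = s_i w_M$ with $M = \tau(v_0)\cap\tau(v_0')$ and deduce $v_0 * w_{\tau(v_0')} = v_0'$. It then identifies both sides with $\overline{K\dot{v}_0'B}$ via the projection $G/B \to G/P_{\tau(v_0')}$ and Lemma~\ref{l:tau-para}.

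Your version shows the equality needs none of the proposition's hypotheses. For arbitrary $G$ and $K$, any $v \in V$, any simple reflection $s$, and any $J \subseteq S$ containing $s$, one has $K\dot{v}P_J = K\dot{u}P_J$ with $u = v*s$. Smoothness enters only, as you note, to add that both sides equal the closed set $G_{v_0'}$ via Lemma~\ref{l:tau-para}. Your argument also recovers the paper's intermediate identity $v_0 * w_{\tau(v_0')} = v_0'$, since the open orbit in $K\dot{v}_0'P_{\tau(v_0')}$ is $K\dot{v}_0'B$.

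Finally, it justifies a step the paper's chain of equalities takes for granted. Identifying $\pi^{-1}(K\dot{v}_0P_{\tau(v_0')}/P_{\tau(v_0')})$ with a closure requires $K\dot{v}_0P_{\tau(v_0')}$ to be closed. That does not follow from the second part of Lemma~\ref{l:tau-para}, because $\tau(v_0)\not\subseteq\tau(v_0')$ can occur (e.g.\ $v_0 = (+--)$, $i=1$). It is most easily justified by your double-coset observation combined with the first part of that lemma.
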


\begin{proof}
The hypothesis of the proposition implies that the $i$ and $i+1$ entries of $v_0$ are opposite signs.  
The $i$ and $i+1$ entries of $v_0'$ are equal numbers, and all other entries of $v_0'$ coincide with
the corresponding entries of $v_0$.  Since $X_{v_0}$ is smooth, $v_0$ has a block decomposition
where each block is of the form $\max(V_{p_j,q_j})$.  This implies that $v_0'$ also has a block decomposition
of this form, so $X_{v_0'}$ is smooth.  Let $M = \tau(v_0) \cap \tau(v_0')$. 
(It is possible that $\tau(v_0') \not\subseteq \tau(v_0)$, e.g.,~ this holds for 
$v_0 = (+--)$ and $i = 1$.)
 
Lemma \ref{l:tau} (with $v_0'$ playing the role of $v$)  implies that $w_{\tau(v_0')} = s_{i} w_M $.  
Moreover, $w_M$ and $s_{i}$ commute and satisfy
$\ell(w_M) + \ell(s_{i}) = \ell(w_{\tau(v_0')})$.  Therefore,
$v_0 * w_{\tau(v_0')} = v_0'$, since
$$
v_0 * w_{\tau(v_0')} = v_0 * s_{i} * w_M = v_0 * s_{i} * s_{i} * w_M = v_0' * s_{i} * w_M = v_0' *  w_{\tau(v_0')} = v_0'.
$$
Letting $\pi: G/B \to G/P_{\tau(v_0')}$ be the natural map, we have 
$$
K \dot{v}_0 P_{\tau(v_0')}/B = \pi^{-1} (K \dot{v}_0 P_{\tau(v_0')}/P_{\tau(v_0')}) = \overline{K (v_0 * w_{\tau(v_0')}) B}/B = \overline{K \dot{v}_0' B}/B = K \dot{v}_0' P_{\tau(v_0')}/B,
$$
as desired.
\end{proof}

\begin{example}
Let $G = GL(2)$, $K = GL(1) \times GL(1)$, and \(v_0=(+-)\).
Set $I=\left\{s_1\right\}  = \cN$, so \(w_I=s_{1}\), \(v=v_0\star s_{1}=(11)\), and $\tau(v) = \{ s_{1} \}$.
Then $G_{v_0} = B$, $M = I$, $P_M = G$,
and the map $\varphi$ is the natural isomorphism
$B \times^B G/B \to G \times^G G/B$.
\end{example}

\begin{example}
Let $G = GL(7)$, $K = GL(4) \times GL(3)$, $v_0 = (+|-|+|1221)$, $I = \{s_1,s_3 \}$, $\cN = \{s_1 \}$.  Then
$v_0' = (33|+|1221)$ and $v = (33|1+221)$.  We have $\tau(v_0) = \{ s_4,s_5,s_6 \}$, $\tau(v_0') = \{ s_1,s_4,s_5,s_6 \}$,
$\tau(v) = \{ s_1,s_3,s_5,s_6 \}$, and $M = \tau(v_0) \cap \tau(v) = \{s_1,s_5,s_6 \}$.  From
Theorem \ref{t:symmetric-mu} and Proposition \ref{p:imag-noncompact}, we obtain 
$$
G_{v_0} \times^B P_{1,3}/B  \stackrel{\varphi}{\longrightarrow}   G_{v_0'} \times^{P_{1,5,6}} P_{1,3,5,6}/B  
=  K \dot{v}_0 P_{1,4,5,6} \times^{P_{1,5,6}} P_{1,3,5,6}/B.
$$
\end{example}

\begin{corollary}\label{corollary: symmetric mu}
Assume the same hypotheses and notation as in Proposition~\ref{t:symmetric-mu}.
The map \(\pi\,\colon G_{v_0'} /P_M\to G_v/P_{\tau(v)}\) obtained by restricting the projection \(G/P_M\to G/P_{\tau(v)}\) is a small resolution.
\begin{proof}
Using the isomorphism $G_v \times^{P_{\tau(v)}} P_{\tau(v)}/B \cong G_v/B$, we see
that the following diagram is Cartesian:
\begin{equation}
\begin{tikzcd}
G_{v_0'} \times^{P_M} P_{\tau(v)}/B\arrow[r,"\mu"]\arrow[d]& G_v/B\arrow[d]\\
G_{v_0'} /P_M\arrow[r,"\pi"]&G_v /P_{\tau(v)}.
\end{tikzcd}
\end{equation}
Here the vertical arrows are smooth maps with fiber \(P_{\tau(v)}/B\).
\end{proof}
\end{corollary}

\section{Tangent spaces and Chern-Mather classes} \label{s:tangent}
We obtain formulas for Chern-Mather classes by applying
localization in equivariant cohomology, together with
the description of tangent spaces to
mixed spaces at $T$-fixed points given in
Corollary \ref{c:tangent2}.  This corollary is proved in
Section \ref{ss:mixed},
which slightly generalizes 
\cite[Section 3]{GZ}.  Section \ref{ss:tangentK} applies these
results to resolutions of $K$-orbit closures.  Most of the results of this subsection apply for arbitrary
$G$ and $K = G^{\theta}_0$, but at the end of the subsection we specialize to $G = GL(n)$ and $K = GL(p) \times GL(q)$.
Section \ref{ss:fixedpoint-gl} describes the torus-fixed points in 
$Z_v$ and $X_v$ for the resolutions we consider, and analyzes
which fixed points in $Z_v$ lie in the fiber over a fixed point in $X_v$.

\subsection{Tangent spaces to mixed spaces} \label{ss:mixed}
The main goal of this section is to prove Corollary \ref{c:tangent2}.  We begin with some generalities.
Let $G$ be a linear algebraic group with closed subgroups $R$ and $S$, and let $M$ be a smooth algebraic variety with
an $R$-action.  Let $Q$ be a closed subvariety of $G$ which is left $S$-invariant and right $R$-invariant, and such that $Q/R$ is a smooth algebraic variety.
Let $X = Q \times^R M$ and let $[q_0, m_0]$ be an $S$-fixed point of $X$.
Then $q_0 R/R \in Q/R$ is also $S$-fixed, so the tangent spaces $T_{[q_0, m_0]} X$ and $T_{q_0 R} (Q/R)$ are representations of $S$.  If $Q$ is a subgroup of $G$,
this situation was analyzed in \cite[Section 3]{GZ}.  Here we require greater generality, since we will take
$Q = G_{v_0} = \overline{K v_0 B}$.

Since $[q_0, m_0]$ is $S$-fixed, $q_0 R/R \in Q/R$ is fixed by $S$, i.e., $q_0^{-1} S q_0 \subseteq R$,
and $m_0$ is fixed by $q_0^{-1} S q_0$.  If $\xi \in \fr$, define vector fields $\xi^{\dagger}$ on $Q$ and $\xi^{\#}$ on $M$ by the rules
\begin{equation}
\xi^{\dagger}_q f_1 = \frac{d}{dt} f_1(q \exp (t \xi)) \Big|_{t=0}, \hspace{.3in} \xi^{\#}_m f_2  = \frac{d}{dt} f_2(\exp (t \xi) m) \Big|_{t=0} 
\end{equation}
for $q\in Q$ and $m\in M$, where $f_1$ and $f_2$ are functions on $Q$ and $M$, respectively.
Define $\psi_1: \fr \to T_{q_0} Q$ and $\psi_2: \fr \to T_{m_0}M$ by $\psi_1(\xi) = {\xi}^{\dagger}_{q_0}$ and
$\psi_2(\xi) = \xi^{\#}_{m_0}$.  Define $\psi = (\psi_1, - \psi_2) : \fr \to V : = T_{q_0} Q \oplus T_{m_0}M \cong T_{(q_0, m_0)} (Q \times M)$.

We now define several $S$-actions.
Let $S$ act on $\fr$ by 
$s \cdot \xi = \Ad (q_0^{-1} s q_0) \xi$.  
Let $S$ act on $Q$ by $s*q = L(s) \circ R(q_0^{-1} s^{-1} q_0) q = s q q_0^{-1} s^{-1} q_0$. 
Similarly, let $S$ act on $M$ by $s*m = L(q_0^{-1} s q_0) m = q_0^{-1} s q_0 m$. 
The direct product of these actions yields an $S$-action (again denoted by $*$) on $Q \times M$.
Since $(q_0,m_0)$ is $S$-fixed, we obtain an $S$-action on $T_{(q_0, m_0)} Q \times M$.
Explicitly, this is the direct product of the $S$-action
on $T_{q_0}(Q)$, where $s \in S$ acts as $L(s)_* R(q_0^{-1} s^{-1} q_0)_*$, with
the $S$-action on $T_{m_0}M$, where $s$ acts as $L(q_0^{-1} s q_0)_*$.

\begin{lemma} \label{l:equivariant}
The maps $\pi: Q \times M \to Q \times^R M$ and $\pi_1: Q \to Q/R$ are $S$-equivariant, where $S$ acts by the $*$-action on the source
and by left multiplication on the target.
\end{lemma}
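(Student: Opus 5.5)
The plan is a direct verification from the definitions. Two things must be checked. First, the $*$-actions are genuine actions preserving $Q$ and $M$. Second, $\pi$ and $\pi_1$ intertwine them with left multiplication. Throughout, the key fact is the one recorded before the lemma: since $[q_0,m_0]$ is $S$-fixed, $q_0^{-1} S q_0 \subseteq R$. Hence for $s\in S$ the element $r_s := q_0^{-1} s q_0$ lies in $R$, and $s\mapsto r_s$ is a group homomorphism $S\to R$.

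First I will check well-definedness and the action axioms. For $q \in Q$ we have $s*q = s\,q\,r_s^{-1}$. This lies in $Q$ because $Q$ is left $S$-invariant and right $R$-invariant. The action axiom holds because
$$
(ss')*q = ss'\,q\,r_{s'}^{-1} r_s^{-1} = s*(s'*q),
$$
using $r_{ss'} = r_s r_{s'}$. On $M$, $s*m = r_s m$ is the pullback of the $R$-action along the homomorphism $s\mapsto r_s$, so it is an action. Taking the product gives the $*$-action on $Q\times M$. It fixes $(q_0,m_0)$, since $s\,q_0\,r_s^{-1} = q_0$ and $m_0$ is fixed by $q_0^{-1} S q_0$.

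Next I will check equivariance of $\pi$. Recall $[q r, m] = [q, r m]$ in $Q \times^R M$ for $r \in R$. Then
$$
\pi(s*(q,m)) = [\,s q r_s^{-1},\, r_s m\,] = [\,s q,\, r_s^{-1} r_s m\,] = [sq, m] = s\cdot[q,m].
$$
Here the last expression is the left $S$-action on $Q\times^R M$. That action is well defined because left multiplication by $S$ preserves $Q$ and commutes with the right $R$-action. The same computation handles $\pi_1$:
$$
\pi_1(s*q) = s q r_s^{-1} R = sqR,
$$
because $r_s^{-1}\in R$.

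The main point, and really the only potential obstacle, is making sure the inclusion $q_0^{-1}Sq_0\subseteq R$ is available, so that $r_s\in R$ and can be absorbed into the balanced product. This inclusion is exactly the consequence of $S$-fixedness of $q_0R/R$ noted before the lemma. Everything else is formal.
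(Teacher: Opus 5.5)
Your proof is correct and is essentially the paper's argument: the paper likewise uses $sq_0 = q_0 r$ with $r = q_0^{-1} s q_0 \in R$, absorbs $r$ across the balanced product to obtain $[sq,m]$, and omits the $\pi_1$ case as easier. Your computation $[sqr_s^{-1}, r_s m] = [sq, m]$ matches the stated definition of the $*$-action exactly, whereas the paper's displayed computation has the inverses on $s$ transposed. Your extra checks that the $*$-actions are genuine actions fixing $(q_0,m_0)$ are correct and harmless additions.
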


\begin{proof}
Since $q_0 R$ is $S$-fixed, if $s \in S$, then $s q_0 = q_0 r$ for some $r \in R$.  Therefore
\begin{align*}
\pi(s*(q,m)) & = \pi(s q q_0^{-1} s q_0, q_0^{-1} s^{-1} q_0 m) = [s q q_0^{-1} s q_0, q_0^{-1} s^{-1}  q_0 m] \\
& = [s q q_0^{-1} q_0 r, q_0^{-1} s^{-1}  q_0 m] =  [s q , r q_0^{-1} s^{-1}  q_0 m] \\
& = [sq, (q_0^{-1} s q_0) q_0^{-1} s^{-1}  q_0 m] = [sq, m].
\end{align*}
Hence $\pi$ is $S$-equivariant.  The proof that $\pi_1$ is $S$-equivariant is similar but easier, and we omit it.
\end{proof}

\begin{proposition} \label{p:exact}
\begin{enumerate}
\item The maps $\psi_1: \fr \to T_{q_0} Q$ and $\psi_2: \fr \to T_{m_0}M$ are $S$-equivariant.
\item The maps $(\pi_1)_*: T_{q_0} Q \to T_{q_0 R/R} (Q/R)$ and $\pi_*: V \to T_{[q_0, m_0]} X$ are $S$-equivariant.
\item We have exact sequences of $S$-modules:
\begin{equation} \label{e:exact1}
\begin{CD}
0 @>>> \fr @>{\psi_1}>> T_{q_0} Q @>{(\pi_1)_*}>> T_{q_0 R/R} (Q/R) @>>> 0
\end{CD}
\end{equation}
and
\begin{equation} \label{e:exact2}
\begin{CD}
0 @>>> \fr @>{\psi}>> T_{q_0} Q \oplus T_{m_0} M @>{\pi_*}>> T_{[q_0,m_0]} X @>>> 0.
\end{CD}
\end{equation}
\end{enumerate}
\end{proposition}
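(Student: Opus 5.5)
Parts (1) and (2) are direct computations from the definitions. For (1), I use that $q_0^{-1}sq_0\in R$ for $s\in S$, so $S$ acts on $\fr$ through $\Ad(q_0^{-1}sq_0)$. Write $r_s=q_0^{-1}sq_0$. Then
\[
s*(q_0\exp(t\xi)) = s q_0 \exp(t\xi) r_s^{-1} = q_0 \exp(t\,\Ad(r_s)\xi),
\]
because $sq_0=q_0r_s$. Differentiating at $t=0$ gives $(s*)_*\psi_1(\xi)=\psi_1(s\cdot\xi)$. Similarly, $r_s\exp(t\xi)m_0 = \exp(t\Ad(r_s)\xi)\,r_s m_0$, and $r_s m_0=m_0$ because $m_0$ is fixed by $q_0^{-1}Sq_0$. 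This gives equivariance of $\psi_2$, and hence of $\psi=(\psi_1,-\psi_2)$. Part (2) then follows by differentiating at the $S$-fixed points the equivariance of $\pi$ and $\pi_1$ proved in Lemma \ref{l:equivariant}.

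For (3), exactness as vector spaces is the main content; the maps are already $S$-equivariant by (1) and (2). I treat \eqref{e:exact1} first. Since $Q$ is right $R$-invariant and $Q/R$ is a smooth variety, I would argue that $\pi_1\colon Q\to Q/R$ is a principal $R$-bundle: it is the restriction of the principal $R$-bundle $G\to G/R$ to the closed subvariety $Q/R$, and $Q=\pi_G^{-1}(Q/R)$. Hence $\pi_1$ is locally trivial in the étale topology. Because $Q/R$ is smooth, $Q$ is smooth. The map $(\pi_1)_*$ is surjective at $q_0$ by smoothness of a principal bundle. Its kernel is the tangent space to the fiber $q_0R$, which is the image of $\fr$ under $\xi\mapsto \xi^\dagger_{q_0}$, i.e.\ $\psi_1(\fr)$. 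Finally, $\psi_1$ is injective because $R$ acts freely on $Q$ on the right, so the orbit map $R\to q_0R$ is an isomorphism.

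For \eqref{e:exact2}, I would use that $Q\times M\to Q\times^R M$ is also a principal $R$-bundle for the action $r\cdot(q,m)=(qr^{-1},rm)$. This is the pullback of the bundle $\pi_1$ via the associated bundle construction, so $X$ is smooth and $\pi_*$ is surjective. The kernel is the tangent space to the $R$-orbit through $(q_0,m_0)$, which is spanned by the derivatives of $t\mapsto (q_0\exp(-t\xi),\exp(t\xi)m_0)$. Up to an overall sign, this is $\psi(\fr)$. Injectivity of $\psi$ follows from injectivity of its first component $\psi_1$.

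I expect the main obstacle to be justifying that $Q\to Q/R$ and $Q\times M\to X$ are genuinely principal bundles with smooth total spaces, rather than merely quotient maps. The plan is to deduce this from the local triviality of $G\to G/R$ (using that $R$ is a closed subgroup of the linear group $G$) together with $Q=\pi_G^{-1}(Q/R)$. Then the standard fact applies: for a principal bundle, the tangent sequence of the orbit inclusion and the projection is exact. Everything else reduces to the equivariance computations above.
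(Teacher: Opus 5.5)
Your proposal is correct and follows essentially the same route as the paper: equivariance in (1)--(2) by direct differentiation, injectivity of $\psi_1$ (hence of $\psi$) from freeness of the right $R$-action, and surjectivity of $(\pi_1)_*$ and $\pi_*$ from the principal $R$-bundle structure. The only difference is cosmetic: the paper gets exactness in the middle by checking $(\pi_1)_*\circ\psi_1=0$ and $\pi_*\circ\psi=0$ and counting dimensions, whereas you identify $\ker \pi_*$ directly with the tangent space to the $R$-orbit; the two arguments amount to the same thing.
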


\begin{proof}
(1) The equivariance of $\psi_2$ follows from \cite[Lemma 3.1]{GZ}.  To verify that $\psi_1$ is $S$-equivariant, we need to check that for $s \in S$ and $\xi \in \fr$,
\begin{equation} \label{e:exact3}
({\Ad(q_0^{-1} s q_0) \xi})^{\dagger}_{q_0} = L(s)_* R(q_0^{-1} s q_0)_* (\xi^{\dagger}_{q_0}).
\end{equation}
We calculate the result by applying each side to a function $f$.  The left hand side yields
\begin{align*}
({\Ad(q_0^{-1} s q_0) \xi})^{\dagger}_{q_0} f & = \frac{d}{dt} f( q_0 \exp (t \Ad(q_0^{-1} s q_0) ) \xi) \Big|_{t=0} \\
& = \frac{d}{dt} f( q_0 q_0^{-1} s q_0 \exp(t \xi) q_0^{-1} s^{-1} q_0 )  \Big|_{t=0} \\
& = \frac{d}{dt} f( s q_0 \exp(t \xi) q_0^{-1} s^{-1} q_0 )  \Big|_{t=0} 
\end{align*}
The right hand side yields
\begin{align*}
L(s)_* R(q_0^{-1} s q_0)_* (\xi^{\dagger}_{q_0}) f & = \xi^{\dagger}_{q_0} (f \circ L(s) \circ R(q_0^{-1} s q_0) ) \\
 & =  \frac{d}{dt} f \circ L(s) \circ R(q_0^{-1} s q_0) (q_0 \exp(t \xi))  \Big|_{t=0} \\
 & =  \frac{d}{dt} f( s q_0 \exp(t \xi) q_0^{-1} s^{-1} q_0 )  \Big|_{t=0} .
\end{align*}
This proves \eqref{e:exact3}.  

(2) By Lemma \ref{l:equivariant}, the maps $\pi_1$ and $\pi$ are $S$-equivariant, so the corresponding derivative maps $(\pi_1)_*$ and $\pi_*$ are $S$-equivariant as well.

(3) By (1) and (2), the maps in the sequences are $S$-equivariant.  The map $\psi_1$ is injective because $R$ acts with trivial stabilizers on $G$, hence on
$Q$; the injectivity of $\psi$ follows from the injectivity of $\psi_1$.  The maps $(\pi_1)_*$ and $\pi_*$ are surjective because the
 $\pi_1$ and $\pi$ are principal $R$-bundle maps.  To check exactness in the middle, it suffices for dimension reasons 
 to show that  $(\pi_1)_* \circ \psi_1 = 0$ and $\pi_* \circ \psi = 0$.  We will verify the second equation; the verification of the first is similar.

For any $\xi \in \fr$, if $f$ is a function on $X$, we have
\begin{align*}
( \pi_* \circ \psi (\xi)) f & = \psi(\xi) (f \circ \pi) = (\xi^{\dagger}_{q_0}, - \xi^{\#}_{m_0} ) (f \circ \pi) =  \frac{d}{dt} f \circ \pi (q_0 \exp (t \xi), \exp (-t \xi) m_0)  \Big|_{t=0} \\
& =  \frac{d}{dt} f([q_0 \exp (t \xi), \exp (-t \xi) m_0]) \Big|_{t=0} =  \frac{d}{dt} f([q_0 , m_0]) \Big|_{t=0} = 0.
\end{align*}
 Hence $\pi_* \circ \psi (\xi) = 0$.  Since this is true for any $\xi \in \fr$, we have $\pi_* \circ \psi = 0$. 
\end{proof}

\begin{corollary} \label{c:tangent}
Keep the hypotheses above, and assume in addition that $S$ is reductive.
Then as representations of $S$, 
\begin{equation} \label{e:tangent}
T_{[q_0, m_0]} X \cong T_{q_0 R/R} (Q/R) \oplus T_{m_0} M,
\end{equation}
where $s \in S$ acts on $T_{m_0} M$ by $L(q_0^{-1} s q_0)_*$.
\end{corollary}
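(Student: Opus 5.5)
We combine the two exact sequences of Proposition \ref{p:exact} and use complete reducibility of representations of the reductive group $S$.

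\textbf{Splitting the two sequences.} Since $S$ is reductive, every finite-dimensional (rational) $S$-module is completely reducible, so every short exact sequence of $S$-modules splits. In the Grothendieck group, or equivalently up to isomorphism, \eqref{e:exact1} gives
\[
T_{q_0} Q \cong \fr \oplus T_{q_0R/R}(Q/R),
\]
and \eqref{e:exact2} gives
\[
T_{q_0}Q \oplus T_{m_0}M \cong \fr \oplus T_{[q_0,m_0]}X.
\]
Substituting the first into the second yields
\[
\fr \oplus T_{q_0R/R}(Q/R) \oplus T_{m_0}M \cong \fr \oplus T_{[q_0,m_0]}X .
\]

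\textbf{Cancelling $\fr$.} For completely reducible modules, isomorphism classes are determined by the multiplicities of irreducible constituents. Cancellation is therefore valid, and we obtain \eqref{e:tangent}. Throughout, the $S$-action on $T_{m_0}M$ is the one fixed before Lemma \ref{l:equivariant}, namely $s$ acts by $L(q_0^{-1}sq_0)_*$. This is exactly the action appearing in the statement.

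\textbf{Point to check.} The only subtlety is that these are genuine rational representations of $S$, so that complete reducibility applies. The actions are induced from algebraic actions on varieties, and $\psi$ and $\pi_*$ are $S$-equivariant by Proposition \ref{p:exact}, so this holds. No serious obstacle is expected.
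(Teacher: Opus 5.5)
Your proposal is correct and follows essentially the same route as the paper: both split \eqref{e:exact1} and \eqref{e:exact2} using the reductivity of $S$, combine them to get $\fr \oplus T_{[q_0,m_0]}X \cong \fr \oplus T_{q_0R/R}(Q/R) \oplus T_{m_0}M$, and then cancel $\fr$ by comparing multiplicities of irreducible $S$-representations. Your added remark that the representations are rational, so that complete reducibility applies, is a reasonable clarification that the paper leaves implicit.
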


\begin{proof}
Because $S$ is reductive, the exact sequences \eqref{e:exact1} and \eqref{e:exact2} split.  Therefore, as $S$-representations,
$$
\fr \oplus T_{[q_0,m_0]} X \cong T_{q_0} Q \oplus T_{m_0} M \cong \fr \oplus T_{q_0 R/R} (Q/R) \oplus T_{m_0} M.
$$
Hence every irreducible $S$-representation
occurs with the same multiplicity in $T_{[q_0,m_0]} X$ and $T_{q_0 R/R} (Q/R) \oplus T_{m_0} M$.  
\end{proof}

If $S$ is a torus normalized by $q_0$,
then $m_0$ is fixed by $S$, and $T_{m_0} M$ is a representation of $S$ via the map $s \mapsto L(s)_*$.
In the next corollary, $\Phi(T_{m_0} M)$ refers to the weights of $T_{m_0}M$ with respect to this action.

\begin{corollary} \label{c:tangent2}
Keep the hypotheses above, and assume in addition that $S$ is a torus normalized by $q_0$.  Then as multisets,
$$
\Phi(T_{[q_0, m_0]} X) = \Phi(T_{q_0 R/R} (Q/R)) \sqcup  q_0 \Phi(T_{m_0} M).
$$
\end{corollary}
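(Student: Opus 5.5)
The plan is to deduce this from Corollary \ref{c:tangent}, since a torus is reductive. That corollary gives an isomorphism of $S$-representations
$$T_{[q_0,m_0]}X \cong T_{q_0R/R}(Q/R)\oplus T_{m_0}M,$$
in which $s\in S$ acts on $T_{m_0}M$ by $L(q_0^{-1}sq_0)_*$. Taking weights, the multiset $\Phi(T_{[q_0,m_0]}X)$ is the disjoint union of $\Phi(T_{q_0R/R}(Q/R))$ with the weights of $T_{m_0}M$ under this twisted action. So the content of the corollary is to identify these twisted weights with $q_0\Phi(T_{m_0}M)$, where $\Phi(T_{m_0}M)$ uses the untwisted action $s\mapsto L(s)_*$ described just before the statement.

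Here are the steps in order.

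First, since $q_0$ normalizes $S$, conjugation $c(s)=q_0^{-1}sq_0$ is an automorphism of $S$. The $S$-fixedness of $q_0R/R$ gives $q_0^{-1}Sq_0\subseteq R$, so $S\subseteq R$ and $S$ acts on $M$ by $L(s)$. Moreover $m_0$ is fixed by $q_0^{-1}Sq_0=S$, which justifies that $T_{m_0}M$ is an $S$-representation via $L(s)_*$.

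Second, let $\lambda$ be a weight of $T_{m_0}M$ for the untwisted action, with weight vector $u$. Then under the twisted action
$$s\cdot u = L(q_0^{-1}sq_0)_*u = \lambda(q_0^{-1}sq_0)\,u.$$
The character $s\mapsto\lambda(q_0^{-1}sq_0)$ is precisely $q_0\lambda$, for the standard action of $N(S)$ on characters, $(n\lambda)(s)=\lambda(n^{-1}sn)$. This is the same convention used for the Weyl group action on weights.

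Third, a weight decomposition for the untwisted action is also a weight decomposition for the twisted action, with each weight $\lambda$ replaced by $q_0\lambda$. Multiplicities are preserved, so the twisted multiset equals $q_0\Phi(T_{m_0}M)$, and the result follows.

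The only real obstacle is bookkeeping of conventions: checking that $\lambda\circ c$ is what the paper means by $q_0\lambda$, rather than $q_0^{-1}\lambda$. I would fix this by matching the convention ${}^x\fa$ and the action of $W$ on roots from Section \ref{ss:definitions}, under which $\dot x$ sends $\fg_\alpha$ to $\fg_{x\alpha}$ and $\alpha(\dot x^{-1}t\dot x)=(x\alpha)(t)$. Everything else is formal once Corollary \ref{c:tangent} is in hand.
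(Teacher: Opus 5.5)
Your proposal is correct and follows the paper's own argument. The paper likewise deduces the statement from Corollary~\ref{c:tangent}, noting only that $q_0\Phi(T_{m_0}M)$ is the set of weights of the twisted action $s\mapsto L(q_0^{-1}sq_0)_*$ used there. Your check that $\lambda\circ c = q_0\lambda$, under the convention $(x\lambda)(t)=\lambda(\dot x^{-1}t\dot x)$, and your observation that $S\subseteq R$ fixes $m_0$ fill in details the paper leaves implicit.
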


\begin{proof}
This follows from Corollary \ref{c:tangent}, since $q_0 \Phi(T_{m_0} M)$ is the set of weights of the $S$-action on $T_{m_0} M$ 
used in that corollary.
\end{proof}

\begin{remark} \label{r:GZ}
Proposition \ref{p:exact} and Corollary \ref{c:tangent} are analogous (respectively) to Proposition 3.4 and Corollary 3.5 of \cite{GZ}.
In that paper, $Q$ was assumed to be a group, and the Lie algebra $\fq$ of $Q$ was used in place of the tangent space $T_{q_0}Q$.
If $Q$ is a group, then the map $L(q_0)_*: \fq \to T_{q_0}Q$ is an $S$-equivariant isomorphism, where
$h \in S$ acts on $\fq$ via $\Ad(q_0^{-1} s q_0)$, and on $T_{q_0}Q$ by $L(s)_* \circ R(q_0^{-1} s^{-1} q_0)_*$.
This isomorphism can be used to rewrite Proposition \ref{p:exact} and Corollary \ref{c:tangent} with $\fq$ in place
of $T_{q_0}Q$, thus recovering the results of \cite{GZ}.  Similarly, Corollary \ref{c:tangent2} is analogous to \cite[Cor.~3.11]{GZ}.
\end{remark}

\subsection{Tangent spaces to resolutions of $K$-orbit closures} \label{ss:tangentK}
In this section we describe the tangent spaces of the iterated bundles used to
construct resolutions of
$K$-orbit closures as in Section \ref{s:resolution}.
At the end of the subsection we specialize to $G = GL(n)$ and $K = GL(p) \times GL(q)$ and the resolutions $Z_v \to X_v$.

With notation as in Section \ref{s:resolution}, let
$Z$ be as in \eqref{e:generalZ}.
Although \(T\) acts on \(Z\), the group \(H\) need not act.  
We will describe \(Z^T\) in terms of \(W\) (and see that $Z^T$ is finite), and describe the multisets
 $\Phi(T_z Z)$ for $z \in Z^T$.  Although the $T$-weights in \(T_zZ\) can
occur with multiplicity greater than $1$, we will see that each \(T\)-weight is nonzero, i.e., every 
$T$-fixed point is nondegenerate.

For each $k$, let $W^{P_{J_k}}$ denote a set of coset representatives
in $W$ for $W/W_{P_k}$; for definiteness, we will take minimal coset representatives.
Identify $W^{P_{J_k}}$ with $G/P_{J_k}$
via the map $x \mapsto \dot{x} P_{J_k}$, where $\dot{x} \in N_G(H) $ is a representative for $x \in W = N_G(H)/H$.

Let $x_0, \ldots, x_m$ be elements of
$W$ such that for each $k$, $\dot{x}_k \in N_G(H) \cap G_{u_{k}}$ is a representative for $x_k$ in $N_G(H)$.
Then  $x = [\dot{x}_0, \ldots, \dot{x}_m]$ is in $Z^T$.  This element does not depend on the choice
of $\dot{x}_k$, so by abuse of notation, write $x = [x_0, \ldots, x_m]$.  We
claim that by changing the $x_k$ if necessary, we may assume that $x_{k-1} \in W^{P_{J_k}}$ for each $k$.  Indeed, we can find
$y_0 \in P_{J_1}$ such that $x_0 y_0 \in W^{P_{J_1}}$.  Then $[x_0, \ldots, x_m] = [x_0 y_0, y_0^{-1} x_1, x_2, \ldots, x_m]$, so by replacing
$x_0$ by $x_0 y_0$ and $x_1$ by $ y_0^{-1} x_1$, we may assume that $x_0 \in W^{P_{J_1}}$.  Iterating this argument, we
can modify $x_2, \ldots, x_m$ in succession so that $x_k \in  W^{P_{J_k}}$, proving the claim.

Recall that if $V$ is a representation of $T$ then $\Phi(V,T)$ is the multiset of $T$-weights of $V$, counted with multiplicity.

\begin{proposition}\label{p:T-fixed-weights}
Let $Z$ be as in \eqref{e:generalZ}.
Then
\begin{equation} \label{e:H-fixed1}
Z^T =\left\{ [x_0,\ldots,x_m] \mid  x_k  \in (G_{u_k}/P_{J_{k+1}})^T \;\text{for all}\;\; 0\leq k\leq m\right\} \subset W^{J_1} \times \cdots \times W^{J_{m+1}}.
\end{equation}
If $Z$ is smooth and $x = [x_0, \ldots x_m] \in Z^T$, then
\begin{equation} \label{e:H-fixed2}
\Phi(T_xZ) = \Phi_0 \sqcup x_0 \Phi_1 \sqcup \cdots \sqcup x_0 x_1 \cdots x_{m-1} \Phi_m,
\end{equation}
where $\Phi_k = \Phi( T_{{x_k}}(G_{u_k}/P_{J_{k+1}}),T)$ and \(\Phi(T_xZ)  = \Phi(T_xZ,T)\).   
\end{proposition}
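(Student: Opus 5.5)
The plan is to induct on $m$, treating $Z$ as an iterated mixed space. Write $Z_{m} = Z$ and, for the inductive step, peel off the first factor:
\[
Z \cong G_{u_0}\times^{P_{J_1}} Z',\qquad Z' = G_{u_1}\times^{P_{J_2}}\cdots\times^{P_{J_m}} G_{u_m}/P_{J_{m+1}},
\]
where $Z'$ is a variety with a left $P_{J_1}$-action. This places us in the setting of Section \ref{ss:mixed} with $Q = G_{u_0}$, $R = P_{J_1}$, $M = Z'$, and $S = T$. The hypotheses hold: $Q$ is left $T$-invariant ($T\subset K$, $T\subset B$) and right $P_{J_1}$-invariant since $J_1\subseteq\tau(u_0)$. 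The base case $m=0$ is simply $G_{u_0}/P_{J_1}$, where both statements are tautological. For the later factors $G_{u_k}$ with $k\ge 1$, the same decomposition applies with $Q$ a Schubert-type set $\overline{Bu_kB}$.

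For the fixed points, I will use the projection $Z\to G_{u_0}/P_{J_1}$, which is $T$-equivariant with fiber over $gP_{J_1}$ isomorphic to $Z'$ via $[g,m]\leftrightarrow m$. A $T$-fixed point $[q_0,m_0]$ projects to a $T$-fixed point $q_0P_{J_1}$ in $G_{u_0}/P_{J_1}\subset G/P_{J_1}$. Since $T$ is regular, $(G/P_{J_1})^T=(G/P_{J_1})^H$, so this point is $x_0P_{J_1}$ with $x_0\in W^{J_1}$ and $\dot x_0\in G_{u_0}$. Taking $q_0=\dot x_0$, the fiber is identified with $Z'$, and $T$ acts on it through $\dot x_0^{-1}T\dot x_0=T$, that is, through the $x_0^{-1}$-twisted action. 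Thus $m_0$ must be $T$-fixed in $Z'$, and induction gives $m_0=[x_1,\ldots,x_m]$. Conversely, such tuples are clearly fixed. Combining this with the normalization argument preceding the proposition (replace $x_{k}$ by an element of $W^{J_{k+1}}$) yields \eqref{e:H-fixed1}. Finiteness follows because each $(G_{u_k}/P_{J_{k+1}})^T$ lies in $(G/P_{J_{k+1}})^H$, which is finite.

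For the weights, assume $Z$ is smooth. Then $Q/R=G_{u_0}/P_{J_1}$ and $Z'$ are smooth, since $Z$ is a fiber bundle over $Q/R$ with fiber $Z'$. I will apply Corollary \ref{c:tangent2} with $q_0=\dot x_0\in N_G(H)$, which normalizes $T$ because it normalizes $H$ and $T=(H\cap K)_0$ — or more simply because conjugation by $\dot x_0$ preserves $H$ and the torus $T$ acts via the restriction. This requires some care: $\dot x_0$ need not normalize $T$ in general. In the $GL(n)$ case $H=T$, so there is no issue. For general $G$, I would instead apply Corollary \ref{c:tangent}, with $S=T$ acting on $T_{m_0}Z'$ via $L(\dot x_0^{-1}t\dot x_0)_*$. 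Here $\dot x_0^{-1}t\dot x_0\in H$, and the weights of $T_{m_0}Z'$ under $H$ restrict to $T$. This gives
\[
\Phi(T_xZ)=\Phi(T_{x_0}(G_{u_0}/P_{J_1}))\sqcup x_0\,\Phi(T_{m_0}Z'),
\]
where the action on $Z'$ is now by $H$. The key step is therefore to carry out the induction with $H$-weights on the tail $Z'$, where $H$ acts because $u_k\in W$ for $k\ge1$ and the $G_{u_k}$ are $B$-stable. This produces $\Phi(T_{m_0}Z',H)=\Phi_1'\sqcup x_1\Phi_2'\sqcup\cdots$. Restricting to $T$ then gives \eqref{e:H-fixed2}, after checking that applying $x_0\cdots x_{k-1}$ commutes with restricting $H$-weights to $T$ in the sense used in the statement.

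I expect the main obstacle to be this bookkeeping of which torus acts at each stage. Only $T$ acts on the first factor $G_{u_0}$, while $H$ acts on the tail, and the twisted action of $T$ on the fiber passes through $H$. I would resolve it as described: use $T$ for the base and $H$ for the fiber, and then restrict.
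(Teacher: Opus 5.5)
Your overall route is the paper's: project to $G_{u_0}/P_{J_1}$, use regularity of $T$ to write the base point as $\dot x_0P_{J_1}$ with $\dot x_0\in N_G(H)\cap G_{u_0}$, recurse on the tail, and iterate the mixed-space tangent decomposition. The gap is in the fixed-point recursion. You assert that $T$ acts on the fiber $Z'$ through $\dot x_0^{-1}T\dot x_0=T$, and conclude that $m_0$ is $T$-fixed in $Z'$. But $\dot x_0\in N_G(H)$ normalizes $T=(H\cap K)_0$ only if $x_0$ preserves $\mathfrak t=\mathfrak h^{\theta}$. This is automatic when $H=T$ (e.g.\ $K=GL(p)\times GL(q)$). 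It fails in the generality of \eqref{e:generalZ}, where $T$ may be a proper subtorus of $H$; for instance, for $SL(3)\supset SO(3)$, $s_1$ does not preserve the line $\mathfrak t$. You retract the claim yourself in the weights paragraph. What you actually know is that $m_0$ is fixed by $T'=\dot x_0^{-1}T\dot x_0$, a different subtorus of $H$. So an induction hypothesis about $T$-fixed points of the tail does not apply.

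The missing observation is the one the paper uses: $T'$ is again regular, because $Z_G(T')=\dot x_0^{-1}Z_G(T)\dot x_0=\dot x_0^{-1}H\dot x_0=H$, so $(G/P_{J_2})^{T'}=(G/P_{J_2})^H$. Hence if $m_0=[g_1,\dots,g_m]$, then $g_1=\dot x_1p_2$ with $\dot x_1\in N_G(H)\cap G_{u_1}$ and $p_2\in P_{J_2}$. The remaining tail $[p_2g_2,\dots,g_m]$ is fixed by $\dot x_1^{-1}T'\dot x_1$, again regular with centralizer $H$, and one iterates. In your inductive framework, this means proving the tail statement for an arbitrary torus $T'\subseteq H$ with $Z_G(T')=H$, i.e.\ $(Z')^{T'}=(Z')^H$; your ``$H$ on the tail'' device needs this anyway.

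Your weight computation, by contrast, is sound. It is more careful than a bare appeal to Corollary \ref{c:tangent2}, which is all the paper cites: that corollary's hypothesis that $q_0$ normalizes $S$ is exactly what can fail when $H\neq T$. Corollary \ref{c:tangent} needs only $S$ reductive and gives $\Phi(T_xZ)=\Phi_0\sqcup\bigl(x_0\,\Phi(T_{m_0}Z',H)\bigr)|_T$. Note, though, that translating by $x_0\cdots x_{k-1}$ does not commute with restriction to $T$. So \eqref{e:H-fixed2} must be read as ``translate the $H$-weights, then restrict.'' The literal formula with $T$-weights makes sense only when these elements normalize $T$, e.g.\ when $H=T$.
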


\begin{proof}
By the discussion before the proposition, when we write $[x_0,\ldots,x_m]$ (where $ x_k  \in (G_{u_k}/P_{J_{k+1}})^T$ for each $k$),
we mean $[\dot{x}_0, \ldots, \dot{x}_m]$, where $\dot{x}_k \in N_G(H) \cap G_{u_k}$.  A straightforward calculation
shows that such an element of $Z$ is fixed by $T$.  Therefore, to prove \eqref{e:H-fixed1}, we must show that if
$x \in Z^T$, then
\begin{equation} \label{e:H-fixed3}
x = [x_0,\ldots,x_m] \mbox{  where  } \dot{x}_k \in N_G(H) \cap G_{u_k} \mbox{  for all  }k.
\end{equation}

Suppose then that $x = [g_0, \ldots, g_m] \in Z^T$, where $g_k \in G_{u_k}$ for each $k$.  Then $g_0 P_{J_1} \in (G/P_{J_1})^T = (G/P_{J_1})^H$,
so $g_0 P_1 = \dot{x}_0 P_1$ for some $\dot{x}_0 \in N_G(H)$.  Thus, $g_0 = \dot{x}_0 p_1$ for $p_1 \in P_{J_1}$.  Since 
$g_0 \in G_{u_0}$ and $G_{u_0}$ is right
$P_{J_1}$-invariant, $\dot{x}_0 \in G_{u_0}$.  We have
$x = [\dot{x}_0 p_1, g_1, \ldots, g_m] = [\dot{x}_0, p_1 g_1, g_2, \ldots, g_m]$.  
Hence
$[p_1 g_1, g_2, \ldots, g_m] \in G_{u_1} \times^{P_{J_2}} \cdots \times^{P_{J_m}} G_{u_m}/P_{J_{m+1}}$ is
fixed by $\dot{x}_0^{-1} T \dot{x}_0$, so $p_1 g_1 \in (G/P_{J_2})^{\dot{x}_0^{-1} T \dot{x}_0} =  (G/P_{J_2})^{\dot{x}_0^{-1} H \dot{x}_0}
= (G/P_{J_2})^H$.  Reasoning as before shows that $p_1 g_1 = \dot{x}_1 p_2$ where $\dot{x}_1 \in N_G(H) \cap G_{u_1}$, and
$ [\dot{x}_0, p_1 g_1, g_2, \ldots, g_m] = [\dot{x}_0, \dot{x}_1, p_2 g_2, \ldots, g_m $.  Equation \eqref{e:H-fixed3}
follows by iterating this argument, so
\eqref{e:H-fixed1} holds.  Equation \eqref{e:H-fixed2} then
follows by an iterated application of Corollary~\ref{c:tangent2}.
\end{proof}

\begin{remark} \label{r:T-fixed-weights}
If $Z$ is as in \eqref{e:generalZ-A2} (which is a special case of \eqref{e:generalZ})., then the description of
$\Phi_k$ can be made more precise: $\Phi_0 = \Phi(\fk/(\fk \cap{}^{x_0}\fp_{I_0})$, and 
for $k \geq 1$, $\Phi_k = \Phi(\fp_{I_k}/{}^{x_k}\fp_{J_{k+1}})$, where $I_0 = \tau(v_0)$,, and
$\fp_J = \mbox{Lie }P_J$ for $J \subset S$.  If $H = T$, the description
of tangent spaces can be reformulated in a way suited for
computer calculations such as those used in Section \ref{s:examples}.
Let $\langle \ \cdot \ , \ \cdot \ \rangle$ denote the natural $\mathbb{Z}$-valued
pairing between characters and cocharacters of $T$.
Choose dominant cocharacters \(\gamma_k\,\colon\C^\times\to T\) 
such that \(\left\{s\in S\mid \langle\alpha_s\vert_T,\gamma_k\rangle=0\right\}=J_k\), and \(\chi_k\,\colon\C^\times\to T\) 
such that \(\left\{s\in S\mid \langle \alpha_s\vert_T,\chi_k\rangle=0\right\}=I_k\).
Let \(\Phi_K = \Phi( \fk) \subset \Phi_G = \Phi( \fg) \).  Then \eqref{e:H-fixed2} yields
\begin{equation}\label{e:atlas-weights}
\begin{split}
\Phi(T_xZ)=&\left\{\alpha\in\Phi_K\mid\langle\alpha, x_0\gamma_1\rangle<0\right\} \sqcup \left\{\alpha\in\Phi_G\mid\langle\alpha, x_0\chi_1\rangle\geq0,\ \langle\alpha,x_0x_1\gamma_2\rangle<0\right\}\\
&\sqcup \left\{\alpha\in\Phi_G\mid \langle\alpha, x_0x_1\chi_2\rangle\geq0,\ \langle\alpha, x_0x_1x_2\gamma_3\rangle<0\right\} \sqcup \cdots\\
&\sqcup \left\{\alpha\in\Phi_G\mid \langle\alpha, x_0\cdots x_{m-1}\chi_m\rangle\geq 0,\ \langle\alpha, x_0\cdots x_{m}\gamma_{m+1}\rangle<0\right\}.
\end{split}
\end{equation}
\end{remark}

We now specialize to the family of resolutions in Theorem~\ref{t:irreducible}.  Let $Z = Z_v = G_{v_0} \times^B P_I/B$.  To describe 
$Z^T$, we use the identification $Z = G_{v_0} \times^{P_{\tau(v_0)}} P_{\tau(v_0)} \times^B P_I/B$, and recall from
Proposition \ref{p:fixedpoint-smooth} the identification of $(G_{v_0}/P_{\tau(v_0)})^T$ with $W(v_0) \cap W^{\tau(v_0)}$.

\begin{corollary}\label{c:T-weights}
Let $Z  = Z_v =  G_{v_0} \times^{P_{\tau(v_0)}} P_{\tau(v_0)} \times^{P_{\tau(v_0)}} P_I/B$.  Then
$$
Z^T = \{ [x_0, x_1, x_2] \mid x_0 \in ( W(v_0) \cap W^{\tau(v_0)} ), x_1 \in W_{\tau(v_0)}, x_2 \in W_I \}.
$$
Given $x = [x_0, x_1, x_2]$, we have
$$
\Phi(T_x Z) = \Phi(\fk/(\fk\cap{}^{x_0}\fp_{\tau(v_0)}))\cup x_0 x_1\Phi(\fp_{\tau(v_0)}/\fb)\cup x_0 x_1 x_2 \Phi(\fp_I/\fb).
$$
\end{corollary}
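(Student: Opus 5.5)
We will deduce the corollary as a special case of Proposition \ref{p:T-fixed-weights}, together with Remark \ref{r:T-fixed-weights}. First we rewrite $Z_v$ in the form \eqref{e:generalZ-A2}. By Lemma \ref{l:tau-para}, $G_{v_0} = K\dot{v}_0 P_{\tau(v_0)}$, so $G_{v_0} = G_{v_0}\times^{P_{\tau(v_0)}} P_{\tau(v_0)}$. This gives
\[
Z_v = G_{v_0}\times^{P_{\tau(v_0)}} P_{\tau(v_0)}\times^B P_I/B .
\]
This is \eqref{e:generalZ} with $m=2$, $u_0=v_0$, $G_{u_1}=P_{\tau(v_0)}$, $G_{u_2}=P_I$, $J_1=\tau(v_0)$, and $J_2=J_3=\emptyset$ (so $P_{J_2}=P_{J_3}=B$). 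The notation $\times^{P_{\tau(v_0)}}$ in the statement's second slot should be read as this $\times^B$. The variety $Z_v$ is smooth, being an iterated bundle with homogeneous successive quotients, so the smoothness hypothesis of Proposition \ref{p:T-fixed-weights} holds.

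Next we apply \eqref{e:H-fixed1}. The fixed points are $[x_0,x_1,x_2]$ with
\[
x_0\in (G_{v_0}/P_{\tau(v_0)})^T,\qquad x_1\in (P_{\tau(v_0)}/B)^T,\qquad x_2\in (P_I/B)^T .
\]
By Proposition \ref{p:fixedpoint-smooth}, the first set is identified with $W(v_0)\cap W^{\tau(v_0)}$. For a parabolic $P_J\supseteq B$, we have $(P_J/B)^T=(P_J/B)^H=\{wB : w\in W_J\}$, using $X^T=X^H$. Hence $x_1$ ranges over $W_{\tau(v_0)}$ and $x_2$ over $W_I$. The normalization discussion before Proposition \ref{p:T-fixed-weights} lets us choose $x_0$ as a minimal coset representative. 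Since $J_2=J_3=\emptyset$, no further normalization of $x_1,x_2$ is needed, and distinct triples give distinct points.

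Finally we apply \eqref{e:H-fixed2}, which gives $\Phi(T_xZ)=\Phi_0\sqcup x_0\Phi_1\sqcup x_0x_1\Phi_2$. Remark \ref{r:T-fixed-weights} identifies the three pieces:
\begin{align*}
\Phi_0 &= \Phi(\fk/(\fk\cap {}^{x_0}\fp_{\tau(v_0)})),\\
\Phi_1 &= \Phi(\fp_{\tau(v_0)}/{}^{x_1}\fb),\\
\Phi_2 &= \Phi(\fp_I/{}^{x_2}\fb).
\end{align*}
For $\Phi_0$ we use $T_{x_0}(K\dot x_0P_{\tau(v_0)}/P_{\tau(v_0)})$ as the tangent space to a $K$-orbit. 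For $\Phi_1$ and $\Phi_2$ we use tangent spaces of $P_J/B$ at $x_jB$. To match the statement, note that for $w\in W_J$ we have $\Phi(\fp_J/{}^{w}\fb)=w\,\Phi(\fp_J/\fb)$, because $\dot w\in P_J$ normalizes $\fp_J$. Substituting gives $x_0\Phi_1=x_0x_1\Phi(\fp_{\tau(v_0)}/\fb)$ and $x_0x_1\Phi_2=x_0x_1x_2\Phi(\fp_I/\fb)$, which is the stated formula (the union being a multiset union).

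The main point requiring care is the bookkeeping of twists. The factor $x_0$ in $\Phi_0$ must be the ${}^{x_0}\fp$ conjugation and not an outer translate. The translates in the later terms come from Corollary \ref{c:tangent2}, where the torus $x_0^{-1}Tx_0$ acts. We will also check that $T$ is normalized by the chosen representatives: $x_0$ must lie in $N_G(H)\cap G_{v_0}$, which holds here since $H=T$. This is what makes Corollary \ref{c:tangent2} applicable at each stage.
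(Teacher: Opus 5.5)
Your proposal is correct and matches the paper's proof. Both specialize Proposition \ref{p:T-fixed-weights} and Remark \ref{r:T-fixed-weights} to $Z=G_{v_0}\times^{P_{\tau(v_0)}}P_{\tau(v_0)}\times^{B}P_I/B$, use Proposition \ref{p:fixedpoint-smooth} for the $x_0$ factor, and rewrite $\Phi(\fp_J/{}^{w}\fb)=w\,\Phi(\fp_J/\fb)$ for $w\in W_J$ because ${}^{w}\fp_J=\fp_J$; your reading of the second $\times^{P_{\tau(v_0)}}$ in the statement as $\times^{B}$ is the intended one.
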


\begin{proof}
The description of $T$-fixed points follows from Proposition \ref{p:T-fixed-weights} and 
the identification of $(G_{v_0}/P_{\tau(v_0)})^T $ with $W(v_0) \cap W^{\tau(v_0)}$.
Since ${}^{x_1} \fp_{\tau(v_0)} = \fp_{\tau(v_0)}$, we have $\fp_{\tau(v_0)}/ {}^{x_1}\fb = {}^{x_1} \fp_{\tau(v_0)}/ {}^{x_1}\fb$,
so $\Phi(\fp_{\tau(v_0)}/ {}^{x_1}\fb) = x_1\Phi(\fp_{\tau(v_0)}/\fb)$.  Similarly, $\Phi(\fp_I/  {}^{x_2}\fb) = x_2 \Phi(\fp_I/\fb)$.
The description of $\Phi(T_x Z)$ follows from these observations and Remark \ref{r:T-fixed-weights}.
\end{proof}

\subsection{Fixed points and fibers} \label{ss:fixedpoint-gl}
Throughout this section we assume $G = GL(n)$ and $K = GL(p) \times GL(q)$, and
we work with the resolution $\mu: Z_v = G_{v_0} \times^B P_I/B \to X_v$.
In this section we describe the $T$-fixed points lying in a fiber of $\mu$
over a $T$-fixed point.  As a consequence, we show that there are
closed $K$-orbits contained in the subset of $X_v$ over which $\mu$ is birational.

Given $y \in W$, let $I_y$ denote the set of $ s_i \in I$ such that $y_i$ and $y_{i+1}$ are 
either both in $[p]$ or both in $[n] \smallsetminus [p]$.

\begin{proposition} \label{p:fixedpoint-inverse}
We have
\begin{equation} \label{e:fixedpoint-inverse1}
X_v^T = \{ y B/B \mid y = wx \mbox{ for some } w \in W(v_0), x \in W_I \}.
\end{equation}
For such $y$,
\begin{equation} \label{e:fixedpoint-inverse2}
\mu^{-1}(yB/B)^T = \{ [w z, z^{-1} x B/B ]  \mid z \in W_{I_y} \}.
\end{equation}
Hence $| (\mu^{-1}(yB/B))^T | = | W_{I_y} |$.
\end{proposition}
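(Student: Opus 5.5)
For \eqref{e:fixedpoint-inverse1} we use that $\mu$ is surjective and $T$-equivariant and that $Z_v$ has finitely many $T$-fixed points. Since $\mu$ is proper and $T$ is a torus, every fixed point of $X_v$ is the image of a fixed point of $Z_v$. Indeed, $\mu^{-1}(yB/B)$ is a nonempty projective $T$-stable variety, so by Borel's fixed point theorem it has a $T$-fixed point. Thus $X_v^T=\mu(Z_v^T)$.

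By Corollary \ref{c:T-weights}, or directly by Proposition \ref{p:T-fixed-weights} applied to $Z_v=G_{v_0}\times^B P_I/B$, a fixed point of $Z_v$ has the form $[w,xB/B]$ with $wB/B\in X_{v_0}^T$ and $x\in W_I$. Its image is $wxB/B$. By Proposition \ref{p:fixedpoint-smooth}, $X_{v_0}^T=\{wB/B\mid w\in W(v_0)\}$, which gives \eqref{e:fixedpoint-inverse1}.

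For \eqref{e:fixedpoint-inverse2}, fix $y=wx$ with $w\in W(v_0)$ and $x\in W_I$. The $T$-fixed points of $Z_v$ are the classes $[w',x'B/B]$ with $w'\in W(v_0)$ and $x'\in W_I$. Such a point lies over $yB/B$ exactly when $w'x'B=wxB$, i.e.\ $w'x'=wx$ in $W$. Put $z=w^{-1}w'=x x'^{-1}\in W_I$. Then the fixed points of the fiber correspond to those $z\in W_I$ with $wz\in W(v_0)$, each giving the point $[wz,z^{-1}xB/B]$. We must check that distinct $z$ give distinct points. Since $[g,p]=[gb,b^{-1}p]$ only for $b\in B$, and the $W$-representatives differ by $H\subset B$, two classes $[wz,\cdot]$ and $[wz',\cdot]$ coincide only if $wzB=wz'B$, i.e.\ $z=z'$.

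The combinatorial step, which is the main point, is to show
\[
\{z\in W_I\mid wz\in W(v_0)\}=W_{I_y}.
\]
Here $W(v_0)$ is defined blockwise: block $j$ of an element must contain $p_j$ entries from $[p]$ and $q_j$ entries from $[n]\smallsetminus[p]$. Since $I\cap\tau(v_0)=\emptyset$, each $s_i\in I$ straddles two blocks (as noted in Section \ref{ss:smooth}). Because the elements of $I$ commute, $W_I\cong(\mathbb{Z}/2)^{|I|}$, and $z=\prod_{i\in A}s_i$ for a subset $A\subseteq I$.

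Right multiplication by $z$ swaps the entries in positions $i,i+1$ for each $i\in A$. These are disjoint pairs, each straddling a different boundary between consecutive blocks. Hence $wz\in W(v_0)$ if and only if each such swap exchanges two entries of the same type (both in $[p]$ or both in $[n]\smallsetminus[p]$). One has to confirm that swaps at different boundaries cannot compensate one another in a shared block. A block adjacent to two swaps $s_{i-1}$ and $s_{i+1}$ would consist of a single entry $i$. In that case both swaps change that block's single entry, and the count still must be preserved, so each swap individually preserves type. To handle this cleanly, we would track the sign-count of each block from left to right by induction on the blocks.

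Finally, $wz$ and $wx$ differ from $w$ only in positions of the swapped pairs. Moreover, the entries at positions $i,i+1$ of $y=wx$ are those of $w$, possibly transposed. So the condition ``same type'' at $(i,i+1)$ is the same for $w$ and for $y$, and the set of allowed $A$ is exactly $I_y$. This gives $W_{I_y}$, and the count $|(\mu^{-1}(yB/B))^T|=|W_{I_y}|$ follows.
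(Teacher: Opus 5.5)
Your proposal is correct and follows essentially the same route as the paper. You obtain $X_v^T=\mu(Z_v^T)$ from the fixed points of $Z_v$, write a fixed point over $yB/B$ as $[wz,z^{-1}xB/B]$ with $z\in W_I$, and use $wz\in W(v_0)$ to force each swapped pair to have the same type, so that $z\in W_{I_w}=W_{I_y}$. Your Borel fixed-point justification, the distinctness check, and the left-to-right block-count induction fill in details the paper only asserts; in particular, the induction rules out compensating swaps, which the paper leaves implicit.

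One small correction: your aside about a single-entry block is off. The block between $s_{i-1}$ and $s_{i+1}$ occupies positions $i,i+1$, and a one-entry block cannot border two elements of $I$, since $s_{i-1}$ and $s_i$ do not commute. The telescoping induction handles every case on its own, so that aside can simply be dropped.
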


\begin{proof}
Since $X_v^T = \mu(Z_v^T)$, \eqref{e:fixedpoint-inverse1} follows from Corollary \ref{c:T-weights}.  

Suppose $\mu([w,xB/B]) = y B/B$, and let
$\xi$ be in $(\mu^{-1}(\dot{y}B/B))^T$.
We have
$\xi = [y_1, y_2 B/B]$ for some $y_1 \in W$, $y_2 \in W_I$ with $y_1 y_2 = y$.  Since $wx = y$, we
see that $y_1 = w z$ where $z = x y_2^{-1} \in W_I$; then $y_2 = z^{-1}x$, so $\xi = [w z, z^{-1} x B/B]$.

Since $z \in W_I$, 
$z$ is a product of reflections $s_{i} \in I$.  Multiplying $w$ on the right by $s_i$ switches the entries $w_i$ and $w_{i+1}$, which are in
adjacent blocks of $w$.  Since both $wB/B$ and $w z B/B$ are in
$X_{v_0}$, each block of $w$ and each block of $wz$ must satisfy the condition of Proposition \ref{p:fixedpoint-smooth}.  This
forces  $w_i$ and $w_{i+1}$ to both be in $[p]$ or to both be in $[n] \smallsetminus [p]$, so $s_i \in I_w = I_y$, where the second equality is because
$\{w_i, w_{i+1} \} = \{y_i, y_{i+1} \} $.  Hence $z \in W_{I_y}$.  

Reversing this reasoning shows that if $z \in W_{I_y}$, then
$[w z, z^{-1} x B/B ] \in (\mu^{-1}(\dot{y}B/B))^T $.  This proves \eqref{e:fixedpoint-inverse1}.  
For distinct $z$, the elements $[w z, z^{-1} x B/B ] $ are distinct, so 
$| (\mu^{-1}(yB/B))^T | = | W_{I_y} |$.
\end{proof}

\begin{example}
Let $G = GL(7) \supset K = GL(4) \times GL(3)$.  Let $v_0 = (1 + 1 | 2 2 | 3 3)$. and $I = \{ s_3, s_5 \}$, so $v = v_0 * w_I = (1 + 2 1 3 2 3)$. 
Let $y = (3 4 6 | 7 1 | 2 5)$.  Then $yB/B \in X_{v_0} \subset X_v$.  In this case, $I_y = I$, and there are $4$ elements in $\mu^{-1}(yB/B)^T$:
$$
\mu^{-1}(yB/B)^T = \{ [y, eB/B],[y s_3, s_3 B/B],  [y s_5, s_5 B/B],  [ys_3 s_5, s_3 s_5 B/B] \}.
$$
On the other hand, if $y = (3 6 4 | 7 1 | 5 2) \in X_{v_0}$, then $I_y$ is empty, and $(\mu^{-1}(yB/B))^T = \{ [y, eB/B] \}$.  
If $y = (3 6 7 | 4 1 | 5 2)$, then $yB/B$ is in $X_v$ but not $X_{v_0}$; $I_y$ is empty, and $(\mu^{-1}(yB/B))^T = \{ [(y s_1, s_1 B/B)] \}$.
\end{example}

We define the birational locus of $\mu: Z_v \to X_v$ to be the open subset $U$ of $X_v$ such
that $\mu|_{\mu^{-1}(U)} \to U$ is an isomorphism.  Since $Z_v$ and $X_v$ are normal, the birational locus is simply
the set of $p \in X_v$ such that $\mu^{-1}(p)$ consists of a single point (see \cite[AG.18]{Borel1991}).

\begin{corollary} \label{c:inverseimage}
\begin{enumerate}
\item If $gB \in X_v$ is in a closed $K$-orbit, then $\mu^{-1}(gB) \cong (\mathbb{P}^1)^k$, where $yB/B \in (K gB/B)^T$
and $k = | I_y |$.
\item Suppose $v_0 = (a_1, \ldots, a_n)$, and suppose that for each $s_{\ga_i} \in I$, the
blocks of $v_0$ containing $a_i$ and $a_{i+1}$ each contain at least two numbers or two opposite signs.
Then there exist closed $K$-orbits in $X_v$ contained in the birational locus of $\mu$.
\end{enumerate}
\end{corollary}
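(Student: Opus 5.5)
For part (1), the plan is to reduce to a $T$-fixed good base point and then apply Theorem \ref{t:fiberhomog}(1). Since $\mu$ is $K$-equivariant, the fiber over $gB$ depends up to isomorphism only on the orbit $KgB/B$. A closed $K$-orbit is isomorphic to the flag variety of $K$, so it contains $T$-fixed points. Hence we may replace $gB$ by some $yB/B\in(KgB/B)^T$.

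Every closed orbit in $X_v$ lies in $X_{v_0}$. Indeed, by Lemma \ref{l:concatenation} a closed orbit $X_u\subseteq X_v$ has clan consisting of signs, and the preimage argument in the proof of Theorem \ref{t:fiberhomog} (via Proposition \ref{p:orbitnumber}) identifies $\mu^{-1}(KuB/B)$ with a $K$-orbit $K u' B\times^B P/B$ for some $u'\le v_0$ with $u'*w_I=u$. Since $u$ consists of signs and $u'*w_I=u$, we get $u'=u$, so $u\le v_0$. We may then take $y\in W(v_0)$ and choose the representative $\dot y=y\in N_G(H)\cap G_{v_0}$, which is a good base point. Theorem \ref{t:fiberhomog}(1) then gives $\mu^{-1}(yB/B)\cong \dot yP_{I\cap\tau(u)}/B\cong(\mathbb P^1)^{k'}$ by Lemma \ref{l:commutingsimple}, with $k'=|I\cap\tau(u)|$.

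It remains to check that $I\cap\tau(u)=I_y$. For a sign clan $u$, a reflection $s_i\in I$ lies in $\tau(u)$ exactly when $a_i,a_{i+1}$ are equal signs, since opposite signs are noncompact imaginary. By Proposition \ref{p:fixedpoint-closed}, $a_i=+$ if and only if $y_i\in[p]$. So equal signs means precisely that $y_i,y_{i+1}$ lie on the same side, that is, $s_i\in I_y$. As a cross-check, Proposition \ref{p:fixedpoint-inverse} gives $|W_{I_y}|=2^{k}$ fixed points, consistent with $(\mathbb P^1)^k$.

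For part (2), by (1) it suffices to find a sign clan $u\le v_0$ such that $a_i\neq a_{i+1}$ (opposite signs) for every $s_i\in I$. Then $I_y=\emptyset$, so the fiber is a point. Since $Z_v$ and $X_v$ are normal, this places the orbit in the birational locus. The sign clans $u\le v_0$ are exactly the concatenations $(u^1\mid\cdots\mid u^h)$ with $u^k$ having $p_k$ plus signs and $q_k$ minus signs, arranged in any order; this follows from Proposition \ref{p:fixedpoint-smooth} and Lemma \ref{l:concatenation}. So the task is combinatorial: in each block, choose the first and last signs so that across each straddling $s_i\in I$ the two adjacent signs differ.

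The hypothesis that each relevant block contains two numbers or two opposite signs means $p_k,q_k\ge1$ for such blocks. Such a block can then begin with either sign and end with either sign. If the block has length at least $2$, its first and last entries can be chosen independently; if it has length $1$ it is not relevant, since a relevant block has $p_k,q_k\ge1$ and so length at least $2$. Because $I$ consists of commuting reflections, the constraints at different bars do not interact except through a block's two ends, and those two ends are independent. So we process the bars from left to right, choosing the first sign of the right-hand block opposite to the last sign of the left-hand block. I expect the main obstacle to be exactly this bookkeeping, namely confirming that a block of length $2$ that is adjacent to two bars in $I$ can still satisfy both constraints. It can: such a block has one $+$ and one $-$, and it can be ordered either way.
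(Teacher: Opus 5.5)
\textbf{Part (1) has a genuine gap.} Your claim that every closed $K$-orbit in $X_v$ lies in $X_{v_0}$ is false. Take $v_0=(11|22)$, $I=\{s_2\}$, $v=(1212)$. The point $eB/B=(1324)s_2B/B$ lies in $X_v$ by Proposition \ref{p:fixedpoint-inverse}, and it lies in the closed orbit $(++--)$. But by Lemma \ref{l:concatenation}, every sign clan below $v_0$ is a concatenation of two clans in $V_{1,1}$, so $(++--)\not\leq v_0$.

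The error is in how you use Proposition \ref{p:orbitnumber}. It produces the unique $u'\le v_0$ with $Ku'P_I=KuP_I$, i.e.\ $u'*w_I=u*w_I$, not $u'*w_I=u$. In the example, $u'=(+-|+-)$ and $u'*s_2=u*s_2=(+11-)$, yet $u'\neq u$. So for closed orbits outside $X_{v_0}$, your $T$-fixed point $y$ is not a good base point, and Theorem \ref{t:fiberhomog}(1) does not apply to it directly.

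The repair is short:
\begin{enumerate}
\item By Proposition \ref{p:fixedpoint-inverse}, write $y=wx$ with $w\in W(v_0)$ and $x\in W_I$.
\item Proposition \ref{p:fiber-general}(2), with $p=\dot x\in P_I$, gives $\mu^{-1}(yB/B)\cong\mu^{-1}(wB/B)$.
\item Your sign computation applies to the good base point $w$, giving $(\mathbb{P}^1)^{|I_w|}$.
\item Finally $I_w=I_y$, because $x$ only permutes entries within the pairs $\{i,i+1\}$ with $s_i\in I$.
\end{enumerate}
The paper sidesteps the issue differently. It uses Theorem \ref{t:fiberhomog} and Proposition \ref{p:fiber-general}(3) only to get $(\mathbb{P}^1)^k$ for \emph{some} $k$. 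It then pins down $k$ by counting fixed points: $2^k=|\mu^{-1}(yB/B)^T|=|W_{I_y}|$.

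Part (2) is sound as you set it up, since you choose $u\le v_0$ explicitly, so your fixed point is a good base point. Your requirement of \emph{opposite} signs across each $s_i\in I$ is the correct one. Equal signs make $s_i$ $u$-compact imaginary, so $s_i\in I\cap\tau(u)$ and $s_i\in I_y$, which produces a $\mathbb{P}^1$ factor. The paper's written proof asks for equal signs (setting $u_{i_1}=u_{i_1+1}=+$, and so on), which contradicts its own definition of $I_y$ and appears to be a slip.

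One small correction: the two ends of a block are not independent. In $(11)$ or $(1-1)$, for instance, both ends cannot be $+$. Your left-to-right procedure does not need independence, though. It only needs the first sign of each right-hand block to be free, which $p_k,q_k\ge 1$ guarantees. Alternatively, set $u_i=+$ and $u_{i+1}=-$ for every $s_i\in I$; this uses at most one sign of each kind per block.
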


\begin{proof}
Since any closed $K$-orbit contains a $T$-fixed point, there exists $y \in W$ such that $yB \in K gB/B$.  
Since the fibers $\mu$ over a $K$-orbit are isomorphic, $\mu^{-1}(gB) \cong \mu^{-1}(yB)$.
By Theorem \ref{t:fiberhomog} and Proposition \ref{p:fiber-general}, $\mu^{-1}(yB/B)$ is isomorphic to $(\mathbb{P}^1)^k$ for some $k$.
The order of $\mu^{-1}(yB/B)^T$ is equal to the Euler characteristic of $(\mathbb{P}^1)^k$, which is $2^k$.
On the other hand, Proposition \ref{p:fixedpoint-inverse} implies that $| \mu^{-1}(yB/B)^T | = | W_{I_y} | = 2^{| I_y |}$.  This proves (1).

For (2), given $v_0$, it suffices to find a closed orbit $X_u$ such that for each $s_{i} \in I$, the entries $u_i$ and $u_{i+1}$ are
signs and are equal, since then $I_y$ is empty for $yB/B \in X_u$, so $\mu^{-1}(yB/B)$ is a single point.  
We can find $u$ as follows.  Let $ I = \{ s_{{i_1}}, \ldots, s_{{i_r}} \}$.  Then let $u_{i_1} = +$, $u_{i_1+1} = +$,
$u_{i_2} = -$, $u_{i_2+1} = -$, $u_{i_3} = +$, $u_{i_3+1} = +$, etc.  The process assigns at most one $+$ sign and one $-$ sign to
each block of $u$.  The $j$-th block of $v_0$ equals $\max(V_{p_j, q_j})$; our hypothesis on
$v_0$ ensures that $p_j \geq 1$ and $q_j \geq 1$.
The remaining entries of $u$ can
be chosen in any manner such that the $j$-th block has $p_j$ $+$ signs and $q_j$ $-$ signs.
\end{proof}

\begin{example} \label{e:closed-birational}
The hypothesis of part (2) of the preceding proposition
is sufficient but not necessary for the birational locus of $\mu$ to contain a closed orbit.
Let $G = GL(5)$, $K = GL(3) \times GL(2)$, $I = \{s_{1}, s_{4} \}$.
If $v_0 = (+ | 1-1 | +)$, then there is no closed orbit contained in the birational locus of
$\mu$.  The reason is that for any closed orbit $X_u$ in $X_v$,
at least one of  $(u_1, u_2)$ or $(u_4, u_4)$ must be
a pair of opposite signs, so $| I_y | \geq 2$ for $yB \in X_u$.  On the other hand, if $v_0 = (- | 1+1 | +)$, then the closed
orbit corresponding to $u = (- | -++ | +)$ is in the birational locus.
\end{example}

\subsection{Push-forwards of Chern classes and Chern-Mather classes} \label{ss:Chern-Mather}
By Corollary \ref{c:CM}, the $T$-equivariant Chern-Mather class of $X_v$ is given by $c_M^T(X_v) = \mu_* (c^T (T Z_v) \cap [Z_v]_T)$.
Localization in equivariant cohomology and Borel-Moore homology can be used to compute the right hand side.
More generally, if $\mu: Z \to Y$ is a $T$-equivariant map of $T$-varieties where $Z$ is nonsingular and $Z^T$ is finite,
we explain how to calculate $\mu_*(c^T(TZ) \cap [Z]_T) \in H^T_*(Y)$.

The $T$-equivariant Borel-Moore homology groups are a module for $H^*_T$, the $T$-equivariant cohomology of a point.  For convenience,
we use complex coefficients; then $H^*_T$ is isomorphic to the symmetric algebra $S(\ft^*)$, where
$\ft = \mbox{Lie } T$.  Let $\cq$ denote the
quotient field of $H^*_T$.  

Let $q \in Z^T$, and let $i_q: \{q \} \hookrightarrow Z$ denote the inclusion.  The tangent space $T_q Z$ is a representation of
$T$; suppose it has weights $\gb_1, \ldots, \gb_n$.  The weights are all nonzero, but they are not necessarily distinct.
Then the $n$-th Chern class and total Chern class of $T_q Z$ are elements of $H^*_T$ given, respectively, by
$c_n^T(T_q Z) = \gb_1 \gb_2 \cdots \gb_n$ and $c^T(T_q Z) = (1 + \gb_1) (1 + \gb_2) \cdots (1 + \gb_n)$.
We have $i_q^* c^T_n(TZ) = c_n^T(T_q Z)$ and $i_q^* c^T(TZ) = c^T(T_q Z)$.  Define
\begin{equation} \label{e:cq}
C(q) = \frac{c^T(T_q Z)}{c^T_n(T_q Z)} = \frac{\prod \gb_i}{\prod(1 + \gb_i) } \in \cq.
\end{equation}

\begin{proposition} \label{p:pushforward}
In $H^T_*Y \otimes_{H^*_T} \cq$, we have
\begin{equation} \label{e:pushforward}
\mu_* (c^T(TZ) \cap [Z]_T) = \sum_{p \in Y^T} ( \sum_{q \in (\mu^{-1}(p))^T} C(q) ) [p].
\end{equation}
\end{proposition}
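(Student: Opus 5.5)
We combine Atiyah--Bott--Berline--Vergne localization on the smooth variety $Z$ with the localization theorem in equivariant Borel--Moore homology on $Y$, together with functoriality of push-forward. We work throughout after tensoring with $\cq$.

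\textbf{Step 1: localize $c^T(TZ)\cap[Z]_T$ on $Z$.} Since $Z$ is nonsingular with $Z^T$ finite, and every fixed point is nondegenerate ($c^T_n(T_qZ)\neq 0$, since the weights $\gb_i$ are nonzero), the localization theorem gives an isomorphism
\[
H^T_*(Z^T)\otimes_{H^*_T}\cq\cong H^T_*(Z)\otimes_{H^*_T}\cq .
\]
For a class $\alpha\in H^*_T(Z)$ we then have the self-intersection formula
\[
\alpha\cap[Z]_T=\sum_{q\in Z^T}\frac{i_q^*\alpha}{c^T_n(T_qZ)}\,[q].
\]
To verify it, pull both sides back by $i_{q'}^*$: the Gysin composite $i_{q'}^*i_{q*}$ is multiplication by the Euler class $c^T_n(T_{q'}Z)$ when $q=q'$ and is zero otherwise. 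Both sides then agree after localization, because $\bigoplus_q i_q^*$ is injective after tensoring with $\cq$.

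Applying this with $\alpha=c^T(TZ)$ and using $i_q^*c^T(TZ)=c^T(T_qZ)$ gives
\[
c^T(TZ)\cap[Z]_T=\sum_{q\in Z^T} C(q)\,[q],
\]
where $C(q)=c^T(T_qZ)/c^T_n(T_qZ)$ is the ratio as in \eqref{e:cq}. (The displayed fraction in \eqref{e:cq} is printed inverted, and I would point this out.)

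\textbf{Step 2: push forward to $Y$.} Equivariant proper push-forward $\mu_*$ is $H^*_T$-linear, so it extends to the localized groups. It sends $[q]$ to $[\mu(q)]$, and $\mu(q)\in Y^T$ because $\mu$ is $T$-equivariant. Grouping the points $q\in Z^T$ according to their image $p=\mu(q)$ gives \eqref{e:pushforward}.

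Note that we never need localization to be an isomorphism on the possibly singular $Y$. We only push forward an identity that already holds in the localized Borel--Moore homology of $Z$. In $H^T_*(Y)\otimes\cq$ the classes $[p]$ span, and the equation holds literally.

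\textbf{Main obstacle.} The only delicate point is justifying the self-intersection formula in Borel--Moore homology for smooth $Z$. Two facts are needed. First, $H^T_*(Z)\otimes\cq$ is spanned by the fixed-point classes; this follows from the localization theorem applied to $Z\smallsetminus Z^T$, which has no $T$-fixed points, so its localized equivariant homology vanishes. Second, the restriction maps $i_q^*$ are defined on $H^T_*(Z)\cong H^*_T(Z)$ via Poincaré duality, since $Z$ is smooth. Both facts are standard (see \cite{EdGr98}, \cite{AnFu24}), so I would cite them rather than reprove them.
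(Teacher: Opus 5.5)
Your proof is correct and is essentially the paper's own argument: localize $c^T(TZ)\cap[Z]_T$ on the smooth variety $Z$ as $\sum_{q\in Z^T}C(q)[q]$, then apply the $H^*_T$-linear push-forward $\mu_*$ and group the fixed points by their image $p=\mu(q)$; your verification of the self-intersection formula via $i_{q'}^*i_{q*}$ just spells out what the paper cites as ``the localization theorem.'' Your remark about \eqref{e:cq} is also right: since $c^T_n(T_qZ)=\prod\beta_i$ and $c^T(T_qZ)=\prod(1+\beta_i)$, the second expression there is inverted, and the proof itself uses the correct ratio $c^T(T_qZ)/c^T_n(T_qZ)$.
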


\begin{proof}
The localization theorem implies that in $H^T_*Z \otimes_{H^*_T} \cq$,
\begin{align*}
c^T(TZ) \cap [Z]_T & = \sum_{q \in Z^T}  i_{q*}  \Big( \frac{1}{c^T_n(T_q Z)} i_q^{*}  (c^T(TZ) \cap [Z]_T) \Big) \\
& = \sum_{q \in Z^T}  i_{q*} (\frac{c^T(T_q Z)}{c_n^T(T_q Z)} \cap [q]) = \sum_{q \in Z^T}  i_{q*} (C(q) \cap [q]).
\end{align*}
We abuse notation and write $ i_{q*} [q] = [q]$, where on the left side, $[q]$ denotes the fundamental class in
$H_*^T(\{ q \})$, and on the right side, $[q]$ is viewed as a class in $H_*^T(Z)$.  With these conventions,
$ i_{q*} (C(q) \cap [q]) = C(q) [q]$, and so, in $H^T_*Z \otimes_{H^*_T} \cq$, we have
$$
c^T(TZ) \cap [Z]_T = \sum_{q \in Z^T} C(q) [q]
$$
Therefore, in $H^T_*Y \otimes_{H^*_T} \cq$, we have
\begin{align*}
\mu_* (c^T(TZ) \cap [Z]_T)  & = \sum_{q \in Z^T} \mu_*(C(q) [q]) = \sum_{q \in Z^T} C(q) \mu_* [q] \\
& = \sum_{p \in Y^T} \Big( \sum_{q \in (\mu^{-1}(p))^T} C(q) \Big) [p],
\end{align*}
proving the result.
\end{proof}

\begin{remark} \label{r:eq-mult}
By the localization theorem, $[X]_T = \sum_{p \in X^T} \varepsilon_p(X) [p]_T$
and $[Z]_T = \sum_{q \in Z^T} \varepsilon_q(Z) [q]_T$.  The elements $\varepsilon_p(X)$ and $\varepsilon_q(Z)$ of $\cq$
are called equivariant multiplicities (see \cite{Brion1997})).
Since $Z$ is smooth, $\varepsilon_q(Z) = \prod_{\gb_i \in \Phi(T_q Z)} \gb_i$.  If $\mu$ is birational, then 
$\varepsilon_p(X) = \sum_{q \in (\mu^{-1}(p))^T} \varepsilon_q(Z)$.
\end{remark}

We now return to the $K$-orbit closure $X_v$.  We abuse
notation and view $c_M^T(X_v)$ as an element of $H^T_*(X)$
by taking its image under the map $H^T_*(X_v) \to H^T_*(X)$.
We can write $c_M^T(X_v) = \sum b_y [yB]_T$, where, in light
of Corollary \ref{c:CM}, formulas for the coefficients $b_y$ can
be obtained by
applying Corollary \ref{c:T-weights} and Proposition \ref{p:pushforward} to our resolution $\mu: Z_v \to X_v$.  
Given $w \in W$, let
$Y_w = \overline{BwB}/B$ denote the corresponding Schubert
variety.  The classes $[Y_w]_T$ form an $H^*_T$-basis
of $H^T_*(X)$, so
\[
[yB]_T = \sum_{y \in W} m_{yw} [Y_w]_T.
\] 
The $m_{yw}$ can be computed
using a formula due Andersen-Jantzen-Soergel and Billey (see \cite{AJS94}, \cite{Billey99})  or a formula due
to Kumar (see \cite{Kumar1996}). 
Hence we can calculate the coefficient of $[Y_w]_T$ in the expression
\begin{equation} \label{e:CM-Schubert}
c_M^T(X_v) = \sum_{w \in W} c^v_w [Y_w]_T = \sum_{w \in W} \Big( \sum_{y \in W} b_y m_{yw} \Big) [Y_w]_T
\end{equation}
and therefore express $c_M^T(X_v)$ in terms of the Schubert basis $[Y_w]_T$.
A similar change of basis was applied in the setting of Springer fibers in  \cite[Section 4]{GZ}, to which we refer
for a more detailed discussion.

\section{Examples} \label{s:examples}
In this section we explain how our resolutions can be realized
as configuration spaces, and explain this realization in two examples.
In the first example, we calculate the equivariant Chern-Mather
class and verify our positivity conjecture in an example (see Section \ref{ss:1212}).  
(the Chern-Mather class calculation does not use the configuration space realization).
In the second example, we show how the configuration space realization can be used
to verify our description of the fibers.

\subsection{Configuration space realizations} \label{ss:configuration}
Resolutions like the ones studied in this paper were realized as configuration spaces in 
\cite{Lar}, building on previous work such as \cite{GelfandMacPherson1982} and \cite[\S5.2]{EWY}.  The construction is as follows.
As usual, we identify $G/B$ with the variety of complete flags of subspaces of $\C^n$.
Write $E^\bullet = E^1 \subset E^2 \subset \cdots \subset E^n$ for such a flag, where
$\dim E^i = i$.  There is an isomorphism $Z_v = G_{v_0} \times^B P_I/B \to X_{v_0} \times_{G/P_I} G/B$ given by $[g,pB] \mapsto (gB, gpP)$.
The target consists of pairs of flags $(E^\bullet, F^\bullet)$, where $E^\bullet \in X_{v_0}$, and 
$E^k = F^k$ whenever $s_k \not\in I$.  In terms of the fiber product realization, $\mu$ is projection on
the second factor, so the fiber $\mu^{-1}(F^{\bullet})$ depends on the possibilities 
for $E^i$ for $s_i \in I$.  These possibilities are limited by the conditions that $E^\bullet \in X_{v_0}$,
and $E^k = F^k$ for $s_k \not\in I$.

The conditions for $E^\bullet$ to be in $X_{v_0}$ are described in \cite[Corollary 1.3]{Wys}.
Given $Z = Z_v$ we depict the conditions for $(E^\bullet, F^\bullet)$ to be in $Z$ in a diagram where
the height of a subspace is its dimension, and a line denotes an inclusion.  For $s_k \not\in I$, we 
write only $F^k$ in the diagram (since
$E^k = F^k$).
 Let $e_{\ell}$ denote the $\ell$-th standard
basis element of $\C^n$.  Write $\C^n = \C^p \oplus \C^{-q}$ for the decomposition into the $+1$ and $-1$
eigenspaces of the matrix $\begin{bmatrix} I_p & 0 \\ 0 & - I_q \end{bmatrix}$, so $\C^p$ is the span of $e_1, \ldots, e_p$
and $\C^{-q}$ is the span of $e_{p+1}, \ldots, e_n$.

\subsection{Example: $v=(1212)$} \label{ss:1212}
Let $G = GL(4)$, $K = GL(2) \times GL(2)$.  Let \(v_0=(1\;1\;2\;2)\), $I = \{ s_2 \}$, and \(v=(1\;2\;1\;2)\).  
In this section we give the configuration space realization of the resolution $\mu: Z = Z_v = G_{v_0} \times^B P_I/B\to X_v$,
and use this to verify the assertion in Theorem \ref{t:fiberhomog} that the fibers are
products of copies of $\mathbb P^1$.  We also use this resolution to compute \(c_M^T(X_v)\) ,
and verify Conjecture \ref{conj:positive}.  Note that in this example,
$\tau(v_0) = \{s_1, s_3 \}$,  $\tau(v) = I$, and $\tau(v_0) \cap \tau(v)$ is empty, so Theorem \ref{t:symmetric-mu} does not yield 
an alternative form of the resolution.

By \cite[Corollary 1.3]{Wys}, a flag $E^\bullet$ is in
$X_{v_0}$ if and only if $\dim A^1 = \dim B^1 = 1$, where
$A^1 = \C^2 \cap E^2$ and $B^1 = \C^{-2} \cap E^2$.  
The realization of $Z_v$ as a configuration space is depicted in Figure \ref{Fig3}.  
\begin{figure}[H]
\begin{equation}
\begin{tikzcd}
&&\C^4\\
&&F^3\arrow[u,dash]\\
\C^{2}\arrow[uurr,dash]&\C^{-2}\arrow[uur,dash]&E^2\arrow[u,dash]&F^2\arrow[ul,dash]\arrow[dl,dash]\\
A^1\arrow[urr,gray,dash]\arrow[u,dash]\arrow[drr,dash]&B^1\arrow[ur,dash]\arrow[dr,dash]\arrow[u,dash]&F^1\arrow[u,dash]\\
&&0\arrow[u,dash]
\end{tikzcd}
\end{equation}
\captionof{figure}{Configuration space of $Z_v$ for $v = (1212)$}
\label{Fig3}
\end{figure}
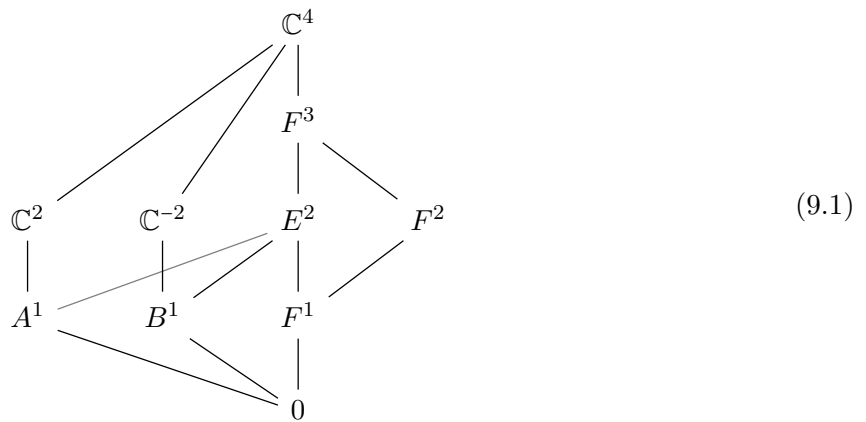

From Section \ref{ss:tangentK}, 
we can write $Z$ in the form
\begin{equation} \label{e:alternative1212}
Z=Kv_0P_{1,3}\times^{P_{1,3}}P_{1,3}\times^{B}P_{2}/B.
\end{equation}
By Corollary \ref{c:T-weights},
\begin{equation} \label{e:zt-example}
Z^T = \{ [x_0, x_1, x_2] \mid x_0 \in W(v_0) \cap W^{\tau(v_0)}, x_1 \in W_{1,3}, x_2 \in W_2\}.
\end{equation}
Here
\begin{equation} \label{e:fp}
W(v_0) \cap W^{\tau(v_0)} = \set{(13|24),(14|23),(23|14),(24|13)}.
\end{equation}
We have $\mu([x_0, x_1, x_2]) = x_0 x_1 x_2 B/B$.  Concretely, $x_0$ is one of the permutations on the right
hand side of \eqref{e:fp}; multiplication by $x_1$ can switch either or both (or neither) of the first two or last two entries,
and multiplication by $x_2$ can switch the middle two entries.  Every \(w\in W\) can be obtained by this process, so
every \(T\)-fixed point in \(G/B\) is in $X_v$.  This also follows since $(G/B)^T$ is contained in the set of closed
$K$-orbits, and by inspecting the closure order on $K$-orbits depicted in \cite{Wys}, we can see that
$X_v$ contains every closed $K$-orbit.

Following the method of Section \ref{ss:Chern-Mather}, we computed the expansion \eqref{e:CM-Schubert}
by using Proposition \ref{p:pushforward} and the weights 
$\Phi(T_z Z_v)$ to compute the $b_y$, and by using the formula of \cite{AJS94} and \cite{Billey99}
to compute
the  \(m_{yw}\).
For example, we found
\begin{equation}
\begin{split}
c_v^{4321}=&\alpha_{1}^{3} \alpha_{2}^{2} \alpha_{3} + \alpha_{1}^{3} \alpha_{2}^{2} + \alpha_{1}^{3} \alpha_{2} \alpha_{3}^{2} + 3 \alpha_{1}^{3} \alpha_{2} \alpha_{3} + 2 \alpha_{1}^{3} \alpha_{2} + \alpha_{1}^{3} \alpha_{3}^{2} + 2 \alpha_{1}^{3} \alpha_{3} + \alpha_{1}^{3} + 2 \alpha_{1}^{2} \alpha_{2}^{3} \alpha_{3} \\& + 2 \alpha_{1}^{2} \alpha_{2}^{3} + 3 \alpha_{1}^{2} \alpha_{2}^{2} \alpha_{3}^{2} + 9 \alpha_{1}^{2} \alpha_{2}^{2} \alpha_{3} + 6 \alpha_{1}^{2} \alpha_{2}^{2} + \alpha_{1}^{2} \alpha_{2} \alpha_{3}^{3} + 7 \alpha_{1}^{2} \alpha_{2} \alpha_{3}^{2} + 12 \alpha_{1}^{2} \alpha_{2} \alpha_{3} \\& + 6 \alpha_{1}^{2} \alpha_{2} + \alpha_{1}^{2} \alpha_{3}^{3} + 4 \alpha_{1}^{2} \alpha_{3}^{2} + 5 \alpha_{1}^{2} \alpha_{3} + 2 \alpha_{1}^{2} + \alpha_{1} \alpha_{2}^{4} \alpha_{3} + \alpha_{1} \alpha_{2}^{4} + 2 \alpha_{1} \alpha_{2}^{3} \alpha_{3}^{2} \\& + 7 \alpha_{1} \alpha_{2}^{3} \alpha_{3} + 5 \alpha_{1} \alpha_{2}^{3} + \alpha_{1} \alpha_{2}^{2} \alpha_{3}^{3} + 9 \alpha_{1} \alpha_{2}^{2} \alpha_{3}^{2} + 16 \alpha_{1} \alpha_{2}^{2} \alpha_{3} + 8 \alpha_{1} \alpha_{2}^{2} + 3 \alpha_{1} \alpha_{2} \alpha_{3}^{3} \\& + 12 \alpha_{1} \alpha_{2} \alpha_{3}^{2} + 14 \alpha_{1} \alpha_{2} \alpha_{3} + 5 \alpha_{1} \alpha_{2} + 2 \alpha_{1} \alpha_{3}^{3} + 5 \alpha_{1} \alpha_{3}^{2} + 4 \alpha_{1} \alpha_{3} + \alpha_{1} + \alpha_{2}^{4} \alpha_{3} \\& + \alpha_{2}^{4} + 2 \alpha_{2}^{3} \alpha_{3}^{2} + 5 \alpha_{2}^{3} \alpha_{3} + 3 \alpha_{2}^{3} + \alpha_{2}^{2} \alpha_{3}^{3} + 6 \alpha_{2}^{2} \alpha_{3}^{2} + 8 \alpha_{2}^{2} \alpha_{3} + 3 \alpha_{2}^{2} + 2 \alpha_{2} \alpha_{3}^{3} + 6 \alpha_{2} \alpha_{3}^{2} \\& + 5 \alpha_{2} \alpha_{3} + \alpha_{2} + \alpha_{3}^{3} + 2 \alpha_{3}^{2} + \alpha_{3}
\end{split}
\end{equation}
in terms of the simple roots \(\alpha_1, \alpha_2, \alpha_3\).
All of the coefficients \(c_v^w\) were found to be sums of monomials in the positive simple roots with nonnegative integer
coefficients, confirming Conjecture \ref{conj:positive} in this example.

\subsection{Example: $v=(1+2132-3)$}
Let \(G = GL(8)\) and $K = GL(4) \times GL(4)$.  Let \(v_0=(1+1| 22 | 3-3)\), \(I=\left\{s_3,s_5\right\}   = v_0=(v_0^1, v_0^2 ,v_0^3)\), 
where \(v_0^1=\max(V_{2,1})\), \(v_0^2=\max(V_{1,1})\), and \(v_0^3=\max(V_{1,2})\).  Let $I = \left\{ s_3,s_5\right\} $; then
$v = v_0*w_I$.  
We have \(\tau(v_0)=\left\{s_1,s_2,s_4,s_6,s_7\right\}\)
and \(\tau(v)=\left\{s_1,s_3,s_5,s_7\right\}\).  The conditions of Theorem \ref{t:fiberhomog} are satisfied, so $\mu: Z_v \to X_v$ is a small resolution
with smooth and strongly reduced fibers.  Let \(M = \tau(v_0) \cap \tau(v) = \left\{s_1,s_7\right\}\); then by Theorem \ref{t:symmetric-mu},
$$
Z = G_{v_0} \times^B P_{3,5}/B \stackrel{\varphi}{\longrightarrow} G_{v_0} \times^{P_{1,7}} P_{1,3,5,7}/B \stackrel{\mu'}{\longrightarrow} X_v,
$$
where $\varphi$ is an isomorphism and $\mu = \mu' \circ \varphi$.  The description of \(Z\) as a configuration space 
is given in Figure \ref{Fig2}.
\begin{figure}[h]
\begin{minipage}[t][9cm]{.63\textwidth}
\vfill
\begin{equation*}
\begin{array}{lll}
y&F_y^\bullet&\mu^{-1}(F_y^\bullet)\\[2pt]
\hline
\\[-1em]
(11++--22)&(e_1+e_5,e_1,e_2,e_3,e_6,e_7,e_4+e_8,e_4)&\P^1\times\P^1\\
(11++-+--)&(e_1+e_5,e_1,e_2,e_3,e_6,e_4,e_7,e_8)&\P^1\\
(++-+--11)&(e_1,e_2,e_5,e_3,e_6,e_7,e_4+e_8,e_4)&\P^1\\
(11+-+-22)&(e_1+e_5,e_1,e_2,e_6,e_3,e_7,e_4+e_8,e_4)&pt\\
(++-+-+--)&(e_1,e_2,e_5,e_3,e_6,e_4,e_7,e_8)&pt\\
(11+-++--)&(e_1+e_5,e_1,e_2,e_6,e_3,e_4,e_7,e_8)&\P^1\\
(++--+-11)&(e_1,e_2,e_5,e_6,e_3,e_7,e_4+e_8,e_4)&\P^1\\
(++--++--)&(e_1,e_2,e_5,e_6,e_3,e_4,e_7,e_8)&\P^1\times\P^1\\
(11++-2-2)&(e_1+e_5,e_1,e_2,e_3,e_6,e_4+e_7,e_8,e_4)&\P^1\\
(1+1+--22)&(e_1+e_5,e_2,e_1,e_3,e_6,e_7,e_4+e_8,e_4)&\P^1\\
(11+22+--)&(e_1+e_5,e_1,e_2,e_3+e_6,e_3,e_4,e_7,e_8)&pt\\
(++-11-22)&(e_1,e_2,e_5,e_3+e_6,e_3,e_7,e_4+e_8,e_4)&pt\\
(1+1-++--)&(e_1+e_5,e_2,e_1,e_6,e_3,e_4,e_7,e_8)&\P^1\\
(++--+1-1)&(e_1,e_2,e_5,e_6,e_3,e_4+e_7,e_8,e_4)&\P^1\\
\end{array}
\end{equation*}
\caption{Fibers over some good basepoints $y\leq v_0$}
\label{table:fibers}
\end{minipage}
\hfill
\begin{minipage}[t][13cm]{0.35\textwidth}
\begin{equation*}
\begin{tikzcd}
&&\C^8\arrow[d,dash]&&&\\
&&F^7\arrow[d,dash]&&&&\\
&&F^6\arrow[d,dash]\arrow[dr,dash]&&&\\
&&E^5\arrow[d,dash]&F^5\arrow[dl,dash]&&\\
\C^4\arrow[uuuurr,dash]&\C^{-4}\arrow[uuuur,dash]&F^4\arrow[d,dash]\arrow[dr,dash]&&&&\\
A^3\arrow[u,dash]\arrow[uurr,dash,color=gray,bend right=20]&&E^3\arrow[d,dash]&F^3\arrow[dl,dash]&&\\
A^2\arrow[u,dash]\arrow[urr,dash,color=gray]\arrow[ddrr,dash,bend right=30]&B^2\arrow[uuur,dash,color=gray]\arrow[uu,dash]&F^2\arrow[d,dash]&&&\\
&B^1\arrow[uur,dash,color=gray]\arrow[u,dash]\arrow[dr,dash]&F^1\arrow[d,dash]&&&\\
&&0&&&
\end{tikzcd}
\end{equation*}
\captionof{figure}{Configuration space of $Z_v$}
\label{Fig2}
\end{minipage}
\end{figure}

In this diagram,  \(A^3=\C^4\cap E^5\), \(A^2=\C^4\cap E^3\), \(B^2=\C^{-4}\cap E^5\), and \(B^1=\C^{-4}\cap E^3\).
The conditions for $E^\bullet \in X_{v_0}$ imply that $\dim A^k = k$ and $\dim B^k = k$.  We see that
 \(E^3=A^2\oplus B^1\) and \(E^5=A^3\oplus B^2\).  By the discussion above, the
 fibers of \(\mu\) are determined by \((E^3,E^5)\).

Table \ref{table:fibers} shows fibers over certain good base points.
 The fiber over a point depends on the orbit $K \dot{y}B/B$ for \(y\leq v_0\) in which the point lies.
For each such $y$, a result from \cite{Yam} (see \cite[Section 2.1]{Wys})
gives us a representative point  \(F^\bullet_y  \in K\dot{y}B/B\).  In the table, we specify $F^\bullet_y$ by writing
a basis \((f_1,\ldots,f_8)\) of \(\C^8\) and setting \(F^k_y=\langle f_1,\ldots,f_k\rangle\).

By Theorem \ref{t:fiberhomog}, writing $k = | I \cap \tau(y) |$, the fiber
 $\mu^{-1}(F_y^\bullet)$ is isomorphic to $(\P^1)^k$, as recorded in the third column
of the table.
 The isomorphisms $\mu^{-1}(F_y^\bullet) \cong (\P^1)^k$
 can be verified directly from the description of $Z$ as a configuration space.
 For example,
 taking $y = (11++--22)$ (the first entry in the table), we see that $| I \cap \tau(y)| = | I | = 2$
 so Theorem \ref{t:fiberhomog} implies that  $\mu^{-1}(F_y^\bullet) \cong \P^1\times\P^1$.  To
 see this from the
 configuration space description, first note that the possibilities for $E^3 = A^2 \oplus B^2$ can
 be determined using $F_y^2 \subset E^3 \subset F_y^4$.  Indeed, we have
 $F^2 \cap \C^{-4} =  \langle e_5 \rangle     \subseteq B^1 \subseteq F^4 \cap \C^{-4}  = \langle e_5 \rangle$.  
 This forces $B^1 = \langle e_5 \rangle$.  Similarly,
 $F_y^2 \cap \C^{4} = \langle e_1 \rangle  \subseteq A_2 \subseteq F_y^4 \cap \C^{4} = \langle e_1,e_2, e_3 \rangle$.
 Thus, the possibilties for $A^2$ are parametrized by
 $\P((F_y^4 \cap \C^{4})/(F_y^2 \cap \C^{4}) )= \P(\langle e_1,e_2, e_3 \rangle/\langle e_1 \rangle ) \cong \P^1$.
 Similarly, the possibilities for $E^5 = A^3 \oplus B^2$ can be determined using
 $F_y^4 \subset E^5 \subset F_y^6$.  We find that $A^3 = \langle e_1, e_2, e_3 \rangle$, and the
 possibilities for $B^2$ are parametrized by 
 $\P((F_y^6 \cap \C^{-4})/(F_y^4 \cap \C^{-4})) = \P(\langle e_5, e_6, e_7 \rangle / \langle e_5 \rangle ) \cong \P^1$.
 Hence
 $$
 \mu^{-1}(F_y^\bullet) \cong \P((F_y^4 \cap \C^{4})/(F_y^2 \cap \C^{4}) ) \times \P((F_y^6 \cap \C^{-4})/(F_y^4 \cap \C^{-4})) \cong \P^1 \times \P^1,
 $$
as expected.

The preceding table describes the fibers over certain good base points.  The fibers over all other good base
points are points.  Indeed, by inspecting the closure order for orbits
contained in $X_{v_0}$ depicted in Figure \ref{figure: 1+1223-3}, one sees that
any $y \leq v_0$ not listed in the table satisfies $X_y \supset X_z$ for some $z \leq v_0$ listed in Table \ref{table:fibers}
such that $F_z$ is a point.  Since $F_y$ is connected and $\dim F_y \le \dim F_z$ by semicontinuity,
$F_y$ is a point.  In particular $F_{v_0}$ is a point.  The fibers over base points that are not necessarily good can be
determined using the isomorphism $F_y \cong F_{y*w}$ for $w \in W_{\tau(v)}$.  In particular, we see that 
$F_v$ is a point.  Since $\mu$ is surjective and $K$-equivariant and each fiber over the open $K$-orbit in $X_v$ is a point,
we recover the fact that $\mu$ is birational.

\begin{figure}[p]
\centering
\includegraphics[angle=90,width=\textwidth,keepaspectratio]{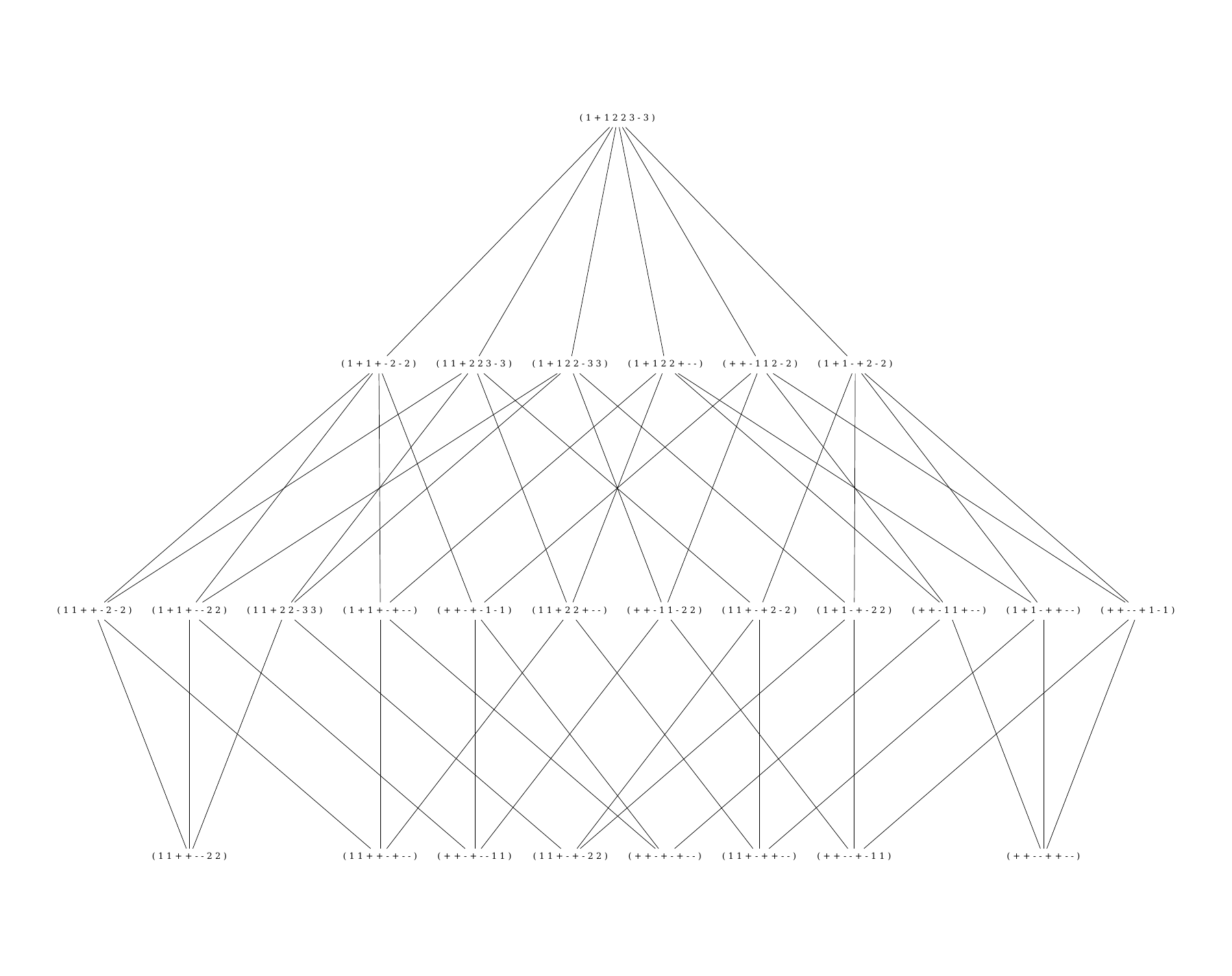}
\caption{Closure order for $(1+1223-3)$}
\label{figure: 1+1223-3}
\end{figure}

By Corollary \ref{c:T-weights}, 
$$
Z^T \cong  ( W(v_0) \cap W^{\tau(v_0)} ) \times W_{\tau(v_0)} \times W_I \cong ( W(v_0) \cap W^{\tau(v_0)} )  \times (S_3 \times S_2 \times S_3) \times (S_2)^2.
$$
An element $w \in W(v_0) \cap W^{\tau(v_0)}$ is determined by the two entries from $[4]$ in the first block and the
entry from $[8] \smallsetminus [4]$ in the first block, and the entries in the second block, one of which is from $[4]$ and one from $[8]\smallsetminus[4]$.
This gives
$$
| W(v_0) \cap W^{\tau(v_0)} | = \left( \binom{4}{2} \cdot 4 \right) \times 3 \times 2 = 144.
$$
Since $|S_m|=m!$, we conclude that
$$
|Z^T| = 144 * 6*2*6*2^2 = 41,472
$$

\bibliographystyle{amsplain}

\end{document}